\newtheorem{theorem}{Theorem}[section]
\newtheorem{lemma}[theorem]{Lemma}
\newtheorem{proposition}[theorem]{Proposition}
\newtheorem{corollary}[theorem]{Corollary}
\theoremstyle{definition}
\newtheorem{definition}[theorem]{Definition}
\newtheorem{notation}[theorem]{Notation}
\newtheorem{example}[theorem]{Example}
\newtheorem{remark}[theorem]{Remark}
\newcommand{\Aut}{\text{Aut}}
\newcommand{\g}{\mathfrak{g}}
\newcommand{\ben}{\begin{enumerate}}
\newcommand{\een}{\end{enumerate}}
\theoremstyle{plain}
\newtheorem*{sol}{Solution}
\theoremstyle{definition}
\theoremstyle{remark}
\newcommand{\solu}[1]{\begin{sol}{\bf (\ref{#1})}}
\begin{document}

\title[Finite dimensional Hopf actions on alg. quantizations]{Finite dimensional Hopf actions on algebraic quantizations}

\author{Pavel Etingof}
\address{Department of Mathematics, Massachusetts Institute of Technology,
Cambridge, MA 02139, USA}
\email{etingof@math.mit.edu}

\author{Chelsea Walton}
\address{Department of Mathematics, Temple University,
Philadelphia, PA 19122, USA}
\email{notlaw@temple.edu}

\subjclass[2010]{16T05, 16S80, 13A35, 16S38}
\keywords{algebraic quantization, filtered deformation, Hopf algebra action, quantum polynomial algebra, Sklyanin algebra, twisted coordinate ring}

\maketitle


\begin{abstract} Let $k$ be an algebraically closed field of characteristic zero. 
In joint work with J. Cuadra \cite{CEW1,CEW2}, we showed that a semisimple Hopf action on a Weyl algebra over a polynomial algebra 
$k[z_1,\dots,z_s]$ factors through a group action, and this in fact holds for any finite dimensional Hopf action if $s=0$. 
We also generalized these results to finite dimensional Hopf actions on algebras of differential operators. 
In this work we establish similar results for Hopf actions on other algebraic quantizations of  
commutative domains. This includes universal enveloping algebras of finite dimensional Lie algebras, spherical symplectic reflection algebras, quantum Hamiltonian reductions of Weyl algebras (in particular, quantized quiver varieties), finite W-algebras and their central reductions, quantum polynomial algebras, twisted homogeneous coordinate rings of abelian varieties, and Sklyanin algebras. The generalization in the last three cases uses a result from algebraic number theory, due to A. Perucca.  
\end{abstract}
\section{Introduction} 

Throughout the paper, $k$ will denote an algebraically closed field of
characteristic zero.
In \cite[Theorem~1.3]{EW1}, we showed that any semisimple 
Hopf action on a commutative domain over $k$
factors through a group action. 
Likewise, it was established in our joint work with Juan Cuadra that
the same conclusion holds for semisimple Hopf actions on Weyl algebras
${\bf A}_n(k[z_1,\dots,z_s])$ \cite[Proposition~4.3]{CEW1}. Moreover, we
showed that it holds for any (not necessarily semisimple) finite
dimensional Hopf action on ${\bf A}_n(k)$ \cite[Theorem~1.1]{CEW2},
and, more generally, on algebras of differential operators of smooth
affine varieties \cite[Theorem~1.2]{CEW2}. Finally, in \cite{EW3} we 
extended these results to certain finite dimensional Hopf actions on deformation quantizations
(i.e., formal quantum deformations) of commutative domains. We say that there is {\it No Finite Quantum Symmetry} in the settings above.

The goal of this paper is establish No Finite Quantum Symmetry results for  finite dimensional Hopf actions on other
algebraic quantizations of commutative domains, i.e., quantizations whose parameters are elements of $k$ 
(rather than formal variables). We now summarize our main results for various classes of algebraic 
quantizations. 

\subsection{Semisimple Hopf actions on filtered quantizations} 
Our first main result concerns Hopf actions on {\it filtered deformations} (or {\it filtered quantizations}) of 
commutative domains, that is, on filtered $k$-algebras $B$ where  the associated graded algebra gr($B$) is  a commutative 
finitely generated domain. 

Let  $B$ be a $\Bbb Z_+$-filtered algebra over $k$ 
such that ${\rm gr}(B)$ is a commutative finitely generated 
domain. We will see that  for sufficiently large primes $p$, 
the algebra $B$ admits a reduction $B_p$ modulo $p$, which is a
domain over $\overline{\Bbb F}_p$. Namely, there exists an $R$-order $B_R\subset B$
over some finitely generated subring $R\subset k$, and $B_p=B_{\psi,p}:=B_R\otimes_R\overline{\Bbb F}_p$ for a homomorphism $\psi: R\to \overline{\Bbb F}_p$.
(For details on $R$-orders in $B$ see Section~2.1 below). 

Recall  that a ring  $A$ is {\it PI} if it satisfies a polynomial identity over $\mathbb{Z}$.
By Posner's and Ore's theorems \cite{Pos,Ore} \cite[Theorem~13.6.5 and Corollary~1.14]{MR},  a domain $A$ is PI if and only if it is an Ore domain and 
its division ring of fractions  ${\rm Frac}(A)$ is a central division algebra. In this case,  ${\rm Frac}(A)$ is a division ring that is dimension $d^2$ over its center, where $d$ is the {\it PI degree} of $A$ \cite[Definition~13.6.7]{MR}.

\begin{definition} Given $B$ as above, we say that $B$  is an {\it algebra with PI reductions}
if it admits an order $B_R$ such that $B_p$ is PI for sufficiently large $p$ (with any choice of $\psi$).
\footnote{It follows from Lemma \ref{exis}(ii) below that if this condition holds for one pair $(R,B_R)$, then the condition  holds for all such pairs.}
\end{definition}

\begin{theorem}
[Theorem~\ref{main1}] 
\label{main1intro} 
If $B$ is an {\it algebra with PI reductions}, then 
any semisimple Hopf action on $B$ 
factors through a group action. 
\end{theorem}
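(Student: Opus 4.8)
The plan is to argue by reduction modulo $p$, reducing the problem to the commutative case governed by \cite[Theorem~1.3]{EW1}. First I would replace $H$ by its quotient by the Hopf kernel of the action, so that it suffices to treat an inner-faithful semisimple action and to prove that $H$ is then a group algebra $k\Gamma$; indeed, ``factoring through a group action'' for the original action is equivalent to this quotient being a group algebra. Next, spread everything out over a finitely generated subring $R\subset k$: choose $R$ so that $H$, its Hopf structure, the action, and an order $B_R$ are all defined over $R$, and for a homomorphism $\psi:R\to\overline{\mathbb F}_p$ form the reductions $H_p$ and $B_p=B_R\otimes_R\overline{\mathbb F}_p$. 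For $p\gg 0$ one has: $H_p$ is semisimple and cosemisimple (Larson--Radford, spread out), $B_p$ is a domain (by the discussion preceding the statement), and, because $B$ has PI reductions, $B_p$ is PI of some PI degree $d$ that may be taken independent of $p$ (using Lemma~\ref{exis}(ii)). A standard spreading-out argument shows that the $H_p$-action on $B_p$ remains inner-faithful for large $p$.

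Now I would invoke Posner's and Ore's theorems: $Q_p:={\rm Frac}(B_p)$ is a central division algebra of dimension $d^2$ over its center $Z_p$, and $B_p$ is module-finite over a finitely generated commutative central domain $C_p\subseteq Z(B_p)$ with ${\rm Frac}(C_p)=Z_p$. The crux is to produce a large $H_p$-stable commutative subalgebra of $B_p$ on which the induced action is still inner-faithful, and then feed it into the commutative result. This is exactly where positive characteristic is essential: since ${\rm gr}(B_p)$ is commutative, all commutators lie in strictly lower filtration degree, and iterating this in characteristic $p$ forces suitable $p$-power expressions into the center, producing an $H_p$-stable commutative central subalgebra. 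Restricting the action there and applying the commutative No Finite Quantum Symmetry result (the characteristic-$p$ analogue of \cite[Theorem~1.3]{EW1}, obtained by lifting the commutative action back to characteristic zero) shows that $H_p$ acts through its group-likes.

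Finally I would lift back to characteristic zero. The group-likes $G(H_p)$ form a group whose order divides $n:=\dim_k H$; the previous step gives $|G(H_p)|=\dim_{\overline{\mathbb F}_p}H_p=n$ for infinitely many $p$, i.e. $H_p=\overline{\mathbb F}_p[G(H_p)]$. Since $H$ is defined over $R$ and its reductions are group algebras of the fixed order $n$ for infinitely many $p$, a rigidity argument shows $|G(H)|=n$, whence $H=k\Gamma$ and the action factors through $\Gamma$. I expect the main obstacle to be the middle step: the center of $B_p$ is not preserved by an arbitrary Hopf action, so one must work to construct a genuinely $H_p$-stable commutative subalgebra over which inner-faithfulness survives, and to make the PI degree $d$ and the comparison with the commutative case uniform in $p$, so that the characteristic-zero conclusion can be read off from the infinitely many reductions.
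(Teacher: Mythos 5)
Your overall skeleton (pass to an inner-faithful action, spread out over a finitely generated ring $R$, reduce modulo $p$, deduce that $H_p$ is cocommutative for infinitely many $p$, lift back to characteristic zero) is the same as the paper's, but the two steps that carry the real content are respectively wrong and missing. First, your claim that $B_p$ is PI ``of some PI degree $d$ that may be taken independent of $p$'' is false, and Lemma~\ref{exis}(ii) gives nothing of the sort --- it only says that any two orders become isomorphic after a common finitely generated extension. For the motivating example $B={\bf A}_n(k)$, the reduction $B_p$ has PI degree $p^n$, which visibly grows with $p$. The actual key input, which the paper takes from \cite[Corollary~3.2(ii)]{Et2}, is that the ring of fractions $D_p$ of $B_p$ is a central division algebra whose degree is a \emph{power of $p$} (possibly depending on $p$): it is precisely because this degree is a $p$-power that it is automatically coprime to $(\dim H)!$ for $p\gg 0$. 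A PI degree constant in $p$ would give you no coprimality whatsoever, so your argument cannot get off the ground at this point.

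Second, the step you yourself identify as the crux --- producing an $H_p$-stable commutative subalgebra of $B_p$ on which the action stays inner faithful, and feeding it into the commutative result of \cite{EW1} --- is not carried out, and it cannot be repaired by the filtration/characteristic-$p$ considerations you sketch. The $H_p$-stability of the center (or of central $p$-power subalgebras) is not automatic; in \cite{CEW1} it is itself a \emph{consequence} of the coprimality of the degree of $D_p$ with $(\dim H)!$ (\cite[Proposition~3.3(i)]{CEW1}), so your plan is circular without the $p$-power degree fact above. Moreover, even when the center $Z$ is stable, the restricted action on $Z$ need not be inner faithful, so knowing that it factors through a group does not by itself make $H_p$ cocommutative. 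The paper sidesteps the construction entirely: it extends the action to $D_p$ (inner faithfulness is preserved by \cite[Theorem~2.2]{SV}) and invokes \cite[Proposition~3.3(ii)]{CEW1}, which directly asserts that a semisimple cosemisimple Hopf algebra acting inner faithfully on a division algebra of degree coprime to $(\dim H)!$ is cocommutative. Your final lifting step (rigidity/group-like counting for infinitely many $p$) is fine in spirit and matches the argument of \cite[Theorem~4.1]{CEW1}, but the proof as proposed breaks at the two middle steps.
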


Note that when the Hopf action preserves the filtration of $B$, Theorem \ref{main1intro} (even without the PI reduction
 assumption) 
is proved in \cite[Proposition~5.4]{EW1}; our main achievement here is that we eliminate this requirement.  

A basic example of an algebra with PI reductions 
is the Weyl algebra \linebreak $B = {\bf A}_n(k)$, and, in fact, the proof of 
Theorem~\ref{main1intro} is analogous to the proof of
\cite[Theorem~4.1]{CEW1} which addresses this case. 
Moreover, a wide range of filtered quantizations 
(each defined in Subsection~\ref{examp} below) 
are algebras with PI reductions, resulting in the following corollary. 

\begin{corollary}[Corollary~\ref{modp-examples1}] \label{modp-examples}
Let $B$ be one of the following filtered $k$-algebras:
\begin{enumerate}
\item[(i)] Any filtered quantization $B$ generated in filtered degree one; 
in particular, the enveloping algebra $U({\mathfrak{g}})$ of a 
finite dimensional Lie algebra $\mathfrak{g}$, or 
the algebra $D_\omega(X)$ of twisted differential operators on a smooth 
affine irreducible variety $X$; 
\item[(ii)] a finite W-algebra or its quotient by a central character; 
\item[(iii)] a quantum Hamiltonian reduction of a Weyl algebra 
by a reductive group action; in particular, the coordinate ring of a quantized quiver variety;
\item[(iv)] a spherical symplectic reflection algebra; or
\item[(v)] the tensor product of any of the algebras above with any commutative finitely generated 
domain over $k$.
\end{enumerate}
Then, any semisimple Hopf action on $B$ factors through a group action.
\end{corollary}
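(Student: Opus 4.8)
The entire content of the corollary is the verification that each algebra on the list is \emph{an algebra with PI reductions} in the sense defined above; once this is known, Theorem~\ref{main1intro} applies verbatim and yields the conclusion for semisimple Hopf actions. Thus the plan is to exhibit, for each class, a suitable $R$-order whose reduction $B_p$ is PI for all sufficiently large $p$ (the choice of order being immaterial by the footnote to the definition). The common mechanism is that in characteristic $p$ these almost-commutative algebras become finite modules over large central subalgebras produced by the Frobenius/$p$-power structure, and that the PI property is inherited under the algebraic operations --- quotients, corners, reductive invariants, Hamiltonian reduction, and tensoring with a commutative domain --- out of which the listed algebras are built.

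For (i), write $V := B_{\leq 1}$. Since ${\rm gr}(B)$ is commutative we have $[B_{\leq 1},B_{\leq 1}]\subseteq B_{\leq 1}$, so $V$ is a finite dimensional Lie algebra under the commutator bracket, and the hypothesis that $B$ is generated in filtered degree one gives a surjection $U(V)\twoheadrightarrow B$. Choosing $R$ so that $V_R:=B_R\cap B_{\leq1}$ is an $R$-Lie algebra, we realize $B_p$ as a quotient of $U(V_p)$ over $\overline{\mathbb{F}}_p$. It is classical that the enveloping algebra of a finite dimensional Lie algebra over a field of characteristic $p$ is PI (it embeds in the enveloping algebra of its finite dimensional $p$-envelope, which is finite over its $p$-center), and quotients of PI algebras are PI; hence $B_p$ is PI. This covers $U(\mathfrak{g})$ and the twisted differential operator algebras $D_\omega(X)$.

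For (ii)--(iv) I would set up compatible $R$-orders and argue by inheritance from the two base cases that already have PI reductions: the enveloping algebras of (i), and the Weyl algebra ${\bf A}_n$, whose reduction is Azumaya over the central polynomial subalgebra generated by the $p$-th powers of the generators, hence PI of degree $p^n$. A finite W-algebra is a quantum Hamiltonian (Whittaker) reduction of $U(\mathfrak{g})$, realized as a subquotient, so its reduction is a subquotient of $U(\mathfrak{g})_p$ and therefore PI; its central reductions are further quotients, still PI. A quantum Hamiltonian reduction of ${\bf A}_n$ by a reductive group is formed from ${\bf A}_n$ by passing to a quotient module and taking $G$-invariants, so its reduction is a subquotient of $({\bf A}_n)_p$ and PI; quantized quiver varieties are the special case of quiver data. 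A spherical symplectic reflection algebra $eH_{t,c}e$ is a corner of $H_{t,c}$, which in characteristic $p$ is finite over its center and hence PI, so the corner $e(H_{t,c})_pe$ is PI as well.

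Finally (v) is formal: if $B$ has PI reductions via $B_R$ and $C$ is a commutative finitely generated domain with order $C_R$, then $(B\otimes C)_R=B_R\otimes_R C_R$ gives $(B\otimes C)_p=B_p\otimes_{\overline{\mathbb{F}}_p} C_p$, and the tensor product over a field of a PI algebra with a commutative (hence PI) algebra is again PI. The main obstacle is not the PI property of the base cases but the uniform compatibility of the construction steps with reduction modulo $p$: one must choose the orders so that forming corners, passing to reductive-group invariants, and performing the Hamiltonian/Whittaker reductions commute with $-\otimes_R\overline{\mathbb{F}}_p$ for all large $p$. This is a spreading-out argument --- generic flatness of the relevant modules together with the fact that the groups in question are linearly reductive (or the unipotent reductions remain exact) once finitely many bad primes are inverted --- and it is exactly where the restriction to sufficiently large $p$ is used.
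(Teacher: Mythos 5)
Your overall plan coincides with the paper's: verify that each listed algebra has PI reductions and then quote Theorem~\ref{main1intro}, handling (ii)--(v) by inheritance (subquotients, corners, tensor products) from the base cases. However, your treatment of case (i) has a genuine gap. You set $V:=B_{\leq 1}$ and assert that $V$ is a \emph{finite dimensional} Lie algebra, so that $B_p$ is a quotient of the enveloping algebra of a finite dimensional Lie algebra in characteristic $p$. This is false for most of the algebras that case (i) is meant to cover: a filtered quantization need not have $F_0B=k$, and for $B=D_\omega(X)$ one has $F_0B=O(X)$, so $B_{\leq 1}=O(X)\oplus\{L_v\}$ is infinite dimensional over $k$ (and $V_p$ is infinite dimensional over $\overline{\Bbb F}_p$). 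The classical fact you invoke --- that $U(L)$ is PI in characteristic $p$ --- is a theorem about \emph{finite dimensional} $L$; enveloping algebras of infinite dimensional Lie algebras in characteristic $p$ are not PI in general, so your argument only covers quantizations with $F_0B=k$ (such as $U(\mathfrak g)$ itself) and does not cover $D_\omega(X)$ or other quantizations with large degree-zero part.

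The missing idea, which is exactly the content of the paper's proof of (i), is a characteristic-$p$ trick that manufactures finite dimensionality: writing $A=B_p$ and $A[0]=F_0A$, the commutators $[a_i,\,\cdot\,]$ with degree-one generators act as derivations of the commutative algebra $A[0]$, hence kill $p$-th powers, so $A[0]^p$ is \emph{central} in $A$. One then localizes at $K:={\rm Frac}(A[0]^p)$; since $A[0]$ is a finitely generated algebra it is module-finite over $A[0]^p$, so after this localization the degree-$\leq 1$ part $L:=F_1A'$ becomes a finite dimensional Lie algebra \emph{over the field $K$}, $A'$ is a quotient of $U(L)$, and only now does Jacobson's theorem (module-finiteness of $U(L)$ over its center in characteristic $p$) apply, giving that $A'$ is module-finite over its center and hence that $B_p$ is PI. Without this step your case (i) is broken, and since the paper derives (ii) and (iii) from (i), the gap matters structurally as well --- although your own versions of (ii) and (iii) survive, since they only use the honestly finite dimensional case $U(\mathfrak g)_p$ and the Weyl algebra $({\bf A}_n)_p$ as base cases. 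Two smaller points: in (iv) you assert without justification that the full symplectic reflection algebra $H_{t,c}$ is finite over its center in characteristic $p$ (the paper instead cites \cite[Theorem~9.1.1]{BFG} for the spherical algebra), and your closing paragraph correctly identifies, but does not actually carry out, the spreading-out compatibility of reduction mod $p$ with invariants and Hamiltonian reduction.
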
 

Other applications of Theorem~\ref{main1intro} have been investigated recently by Lomp and Pansera   \cite{LP}; for instance, they establish No Finite Semisimple Quantum Symmetry on certain iterated differential operator rings.

\begin{remark} We do not  know if a filtered quantization of a finitely generated commutative domain over $k$ must be an algebra with PI reductions (i.e., if the PI reduction assumption is, in fact, vacuous), see the question in \cite[Introduction]{CEW1} and \cite[Question~1.1]{Et2}. This is of independent interest in noncommutative ring theory.
\end{remark}

 \subsection{Finite dimensional Hopf actions on filtered quantizations} 
 
 Similarly to \cite[Theorem~4.2]{CEW1}, Theorem~\ref{main1intro} and hence Corollary~\ref{modp-examples} hold for Hopf-Galois actions of any (not necessarily semisimple) finite dimensional Hopf algebra  (Theorem~\ref{main1a}). The proof is 
parallel to the proofs of Theorem \ref{main1intro} and  \cite[Theorem~4.2]{CEW1}. 

Moreover, it turns out that even without the Hopf-Galois assumption, Theorem~\ref{main1intro} extends to non-semisimple
Hopf actions for a somewhat more restrictive class of quantizations. To see this, let us recall some algebras introduced in \cite{CEW2}.

\begin{notation}[$B$, $B_{p^m}$, $C_m$, $D_{p^m}$, $Z$, $Z(m)$] \label{notation-B}
Let  $B$ be a quantization with PI reductions, and let $B_{p^m}$ be the reduction of $B$ modulo $p^m$. 
Let $C_m$ be the center of $B_{p^m}$, ${\rm Frac}(C_m)$ be its ring of fractions,  
and $D_{p^m}:=B_{p^m}\otimes_{C_m}{\rm Frac}(C_m)$. The  PI reduction condition implies 
that $D_{p^m}$ is  the full localization (i.e., ring of fractions) of $B_{p^m}$. 
These  algebras are defined over the truncated Witt ring $W_{m,p}$ of $\overline{\Bbb F}_p$ 
(cf. \cite[Subsections~2.1,~2.3,~2.4]{CEW2}). Let $Z$ be the center of the central 
 division algebra $D_p$. (Here and below, to lighten the notation, we will often suppress dependence on $p$.) 
Let $Z_m$ be the center of $D_{p^m}$, and let
$Z(m)$ be its image in $D_p$ under the map $D_{p^m} \twoheadrightarrow D_p$ (so $Z(1)=Z$). It is easy to see that $Z(m)\subset Z(m-1)$. 
\end{notation}

\begin{definition}\label{nondege} 
We say that  an algebra $B$ with PI reductions is {\it nondegenerate} if  for almost all $p$ one has 
$\textstyle \bigcap_{m\ge 1}Z(m)=\overline{\Bbb F}_p$. 
\end{definition}

\begin{theorem}[Theorem \ref{anyact1}] \label{anyact}If $B$ is a nondegenerate algebra with PI reductions,
then any finite dimensional Hopf action on $B$ factors through a group action (i.e., the condition that $H$ is semisimple in 
Theorem~\ref{main1intro} can be dropped). 
\end{theorem}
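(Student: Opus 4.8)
The plan is to adapt the argument of \cite[Theorem~1.1]{CEW2} from the Weyl algebra to an arbitrary nondegenerate algebra with PI reductions. Replacing $H$ by its image in $\End_k(B)$, I may assume the action is faithful; since $k$ is algebraically closed of characteristic zero, it then suffices to prove that $H$ is cocommutative, as a finite-dimensional cocommutative Hopf algebra over such a field is the group algebra of a finite group, for which the module-algebra structure is exactly a group action. Equivalently, I must show that the non-grouplike part of $H$ acts trivially on $B$. To get at this, I would spread the $H$-action out over a finitely generated subring $R\subset k$ (enlarging $R$ so that $B_R$, the Hopf algebra $H$, all of its structure maps, and the action are defined over $R$) and reduce modulo $p$. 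For $p$ large this yields, for every $m$, compatible actions of the reductions $H_{p^m}$ on the tower $B_{p^m}$ of Notation~\ref{notation-B}. Because each $B_{p^m}$ is an Ore domain, the action extends uniquely to its full ring of fractions $D_{p^m}$, and the PI-reduction hypothesis makes $D_{p^m}$ a central simple algebra over the field $Z_m$, compatibly with the truncation maps $D_{p^m}\twoheadrightarrow D_p$.

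The key structural step is to show that the action of $H_{p^m}$ on $D_{p^m}$ is \emph{inner}: there is a convolution-invertible map $\phi_m\colon H_{p^m}\to D_{p^m}$, unique modulo the center $Z_m$, with $h\cdot d=\sum \phi_m(h_{(1)})\,d\,\phi_m^{-1}(h_{(2)})$. This is a Hopf-theoretic Skolem--Noether statement: a central simple algebra is separable over its center, so its Hochschild cohomology vanishes in positive degrees and the obstruction to gauging away a finite-dimensional Hopf action disappears. I expect this lemma, carried out uniformly over the truncated Witt rings $W_{m,p}$ rather than over a single field, to be the main obstacle, since one must lift the implementing elements from level $m-1$ to level $m$ and arrange that the maps $\phi_m$ are mutually compatible under the truncations $D_{p^m}\twoheadrightarrow D_p$.

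Granting inner-ness, the conclusion follows from the nondegeneracy hypothesis. For central $z\in Z_m$ one computes $h\cdot z=\sum\phi_m(h_{(1)})\,z\,\phi_m^{-1}(h_{(2)})=\eps(h)z$, so $H_{p^m}$ fixes $Z_m$; in particular the action on the centers is trivial. The remaining content is to show that the implementing elements attached to the non-grouplike part of $H_{p^m}$ may be taken central. Since each $\phi_m(h)$ is only defined modulo $Z_m$, a non-grouplike $h$ determines a class in $D_{p^m}/Z_m$, and compatibility across the tower together with the projections to $D_p$ confines the obstruction to triviality to $\bigcap_{m\ge 1}Z(m)$. Nondegeneracy, $\bigcap_{m\ge 1}Z(m)=\overline{\Bbb F}_p$, then forces these implementers to be scalars, so the non-grouplike part of $H_{p^m}$ acts trivially and $H_{p^m}$ acts through its grouplikes for almost all $p$ and all $m$. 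Lifting this conclusion back to characteristic zero as in \cite{CEW2} shows that the non-grouplike part of $H$ annihilates $B$; hence the faithful quotient is a group algebra and the action factors through a group.
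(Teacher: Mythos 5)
Your key structural step is false, and the rest of the argument hinges on it. You claim that the action of $H_{p^m}$ on $D_{p^m}$ is inner, justified by a ``Hopf-theoretic Skolem--Noether'' argument, and you then correctly deduce that an inner action fixes the center pointwise: $h\cdot z=\eps(h)z$ for $z\in Z_m$. But that deduction shows your implication runs in the wrong direction: Skolem--Noether (and the separability/Hochschild-cohomology vanishing you invoke) applies only to actions that are \emph{already} trivial on the center; it cannot be used to prove center-triviality, and center-triviality simply fails here. Take $B=\bold A_1(k)$ with the $\Bbb Z/2\Bbb Z$-action $x\mapsto -x$, $y\mapsto -y$: this is a nondegenerate algebra with PI reductions carrying a group action, so it is perfectly consistent with the theorem, yet mod $p$ the center of $D_p$ is $\overline{\Bbb F}_p(x^p,y^p)$ and the action sends $x^p\mapsto -x^p$. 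This action moves the center, hence is not inner, and no convolution-invertible $\phi_m$ implementing it can exist. Your proposed lemma would ``prove'' that every finite dimensional Hopf action --- including every group action --- is trivial on $Z_m$, which is false; consequently the later steps (confining the obstruction for the implementers to $\bigcap_{m\ge 1}Z(m)$, etc.) have nothing to stand on. (Separately, note that $B_{p^m}$ for $m\ge 2$ is not a domain and $Z_m$ is not a field --- these are algebras over the truncated Witt ring $W_{m,p}$, which has nilpotents --- but that is a minor issue compared to the innerness claim.)

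The paper's mechanism is essentially the opposite in logical order: one first controls the action on the center, and Skolem--Noether-type arguments enter only at the very end (inside the cocommutativity step inherited from \cite{CEW1,CEW2}), applied to a part of the action that has already been made center-trivial. Concretely: for $p\gg 0$, $H_p$ acts inner faithfully on the central division algebra $D_p$, whose PI degree is a power of $p$ by the PI-reduction hypothesis and \cite[Corollary~3.2]{Et2}; coprimality of $p^n$ with $(\dim H)!$ forces $H_p$ to preserve $Z$, and a generalization of \cite[Lemma~4.6]{CEW2} shows that $H_p$ preserves each subfield $Z(m)$ of Notation~\ref{notation-B}. One then proves that $[Z:Z(m)]$ is a power of $p$ (if $a\in D_{p^m}$ is central modulo $p^{m-1}$ then $a^p$ is central, so $Z(m)\supset Z^{p^{m-1}}$, and Lemma~\ref{lem:fieldfinite} gives finiteness), while the nondegeneracy hypothesis supplies exactly $\bigcap_{m\ge 1}Z(m)=\overline{\Bbb F}_p$. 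These are precisely the hypotheses of Theorem~\ref{th3.2gen}, which is the real engine of the proof: a statement about Hopf actions on fields preserving a tower of subfields of $p$-power index with trivial intersection, valid when $p>\dim H$. It yields that the action on $Z$ factors through a group algebra, and the remainder of the proof of \cite[Theorem~1.1]{CEW2}, modified as in Theorem~\ref{main1}, gives cocommutativity of $H_p$ and hence of $H$. So nondegeneracy is used to verify the field-tower hypothesis of Theorem~\ref{th3.2gen}, not to kill obstruction classes of implementing elements; you should restructure your argument around the action on $Z$ and the tower $Z(m)$ rather than around innerness on $D_{p^m}$.
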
 

The proof of Theorem \ref{anyact} is similar to the proof of \cite[Theorem~1.2]{CEW2}. 

To illustrate when the nondegeneracy condition holds, recall  that ${\rm gr}(B)$ carries a natural Poisson bracket. 
Namely, if $B$ is commutative, this bracket is zero; 
otherwise, if $d$ is the largest integer such that 
$[F_iB,F_jB]\subset F_{i+j-d}B$, then 
for $a_0$ $\in {\rm gr}_i(B)$ and $b_0\in {\rm gr}_j(B)$, 
the Poisson bracket 
$\lbrace a_0,b_0\rbrace$ is the projection of $[a,b]$ to ${\rm gr}_{i+j-d}(B)$, 
where $a\in F_iB$, $b\in F_jB$ are any lifts of $a_0,b_0$ respectively. 
Thus, \linebreak ${\rm gr}(B)=O(X)$, where $X$ is an irreducible Poisson algebraic 
variety. 

The nondegeneracy assumption is satisfied, in particular, when $X$ is a {\it generically symplectic} Poisson variety, i.e., one having a symplectic dense open subset; see Theorem~\ref{nondegen}.  Therefore, Theorem \ref{anyact} holds for many of the examples of Corollary~\ref{modp-examples}
--- quantum Hamiltonian reductions of Weyl algebras, central reductions of finite W-algebras, spherical symplectic reflection algebras, and tensor products thereof (see Corollary \ref{gensymp}).  

\subsection{Quantum polynomial algebras}
For our next  main result, we consider finite dimensional Hopf actions on {\it quantum polynomial algebras} (or {\it quantized coordinate rings of  affine $n$-space}):
$$k_{\bold q}[x_1,\dots, x_n] := k \langle x_1,\dots, x_n \rangle/ (x_ix_j-q_{ij}x_jx_i),$$
where ${\bold q}=(q_{ij})$, $q_{ij}\in k^\times$ with $q_{ii}=1$ and $q_{ij}q_{ji}=1$. Thus we can view ${\bold q}$ as a point of  
the algebraic torus $(k^\times)^{n(n-1)/2}$ with coordinates $q_{ij}$ for $i<j$. 

There are many examples of semisimple Hopf actions on $k_{\bold q}[x_1,\dots, x_n]$ that do not factor through 
group actions;  the parameters $q_{ij}$ are roots of unity in these examples. See, for instance, \cite[Theorem~0.4]{CKWZ}, \cite[Example~5.10]{EW1}, and \cite[Examples~7.4--7.6]{KKZ}. 
Still, we establish the following result.
  
Let $\langle {\bold q}\rangle $ be the subgroup in $(k^\times)^{n(n-1)/2}$ 
generated by $\bold q$, and let $G_{\bold q}$ be its Zariski closure. 
Let $G_{\bold q}^0$ be the connected component of the identity in $G_{\bold q}$. 

\begin{theorem}[Theorem~\ref{main3}] \label{main3intro} Let $H$ be a semisimple Hopf algebra of dimension $d$. If the order of $G_{\bold q}/G_{\bold q}^0$ is coprime to $d!$, then any $H$-action on $B:=k_{\bold q}[x_1,\dots, x_n]$ factors through a group action. 
\end{theorem}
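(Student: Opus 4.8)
The plan is to pass to positive characteristic, where $B$ becomes PI, to reduce the question to an action on a central division algebra whose degree I can control numerically, and then to use the coprimality hypothesis---through Perucca's theorem---to produce a single good prime at which the reduced action is forced to be grouplike. First I would spread out and reduce modulo $p$: choose a finitely generated $R\subset k$ containing the $q_{ij}^{\pm 1}$, the structure constants of $H$, and the matrix coefficients of the action, so that $B_R=R_{\mathbf q}[x_1,\dots,x_n]$ is an $R$-order carrying an $H_R$-action. For all but finitely many $p$ and any $\psi\colon R\to\overline{\mathbb F}_p$, reduction yields a semisimple Hopf algebra $H_p$ of dimension $d$ (semisimplicity of the reduction for large $p$ following from the Larson--Radford theorem, as in \cite{EW1}) acting on $B_p=k_{\overline{\mathbf q}}[x_1,\dots,x_n]$ over $\overline{\mathbb F}_p$. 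The decisive gain is that every $\overline q_{ij}\in\overline{\mathbb F}_p^{\times}$ is now a root of unity: writing $\ell_p$ for the order of $\overline{\mathbf q}$ in $T(\overline{\mathbb F}_p)$, each $x_i^{\ell_p}$ is central, so $B_p$ is module-finite over its center and hence PI. Its ring of fractions $D_p={\rm Frac}(B_p)$ is a central division algebra of PI degree $d_p$ over its center field $Z$, and a computation with the alternating bicharacter $\beta_p(\mathbf a,\mathbf b)=\prod_{i,j}\overline q_{ij}^{\,a_ib_j}$ on $\mathbb Z^n$ gives $d_p^2=[\mathbb Z^n:L_p]$, where $L_p$ is the radical of $\beta_p$; since $L_p\supseteq \ell_p\mathbb Z^n$, every prime divisor of $d_p$ divides $\ell_p$.

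The Hopf-theoretic heart of the argument is the following claim, to be proved by the method of \cite{CEW1,CEW2}: if $\gcd(d_p,d)=1$, then the reduced action of $H_p$ on $B_p$ factors through a group. Here I would use semisimplicity (via the normalized integral of $H_p$) to descend the action to the center $Z$, which is the fraction field of a commutative domain and on which the action therefore factors through a group by the positive-characteristic analogue of \cite[Theorem~1.3]{EW1}; the coprimality of the degree $d_p$ of $D_p$ with $\dim H_p=d$ is then exactly what prevents the residual action along the $d_p^2$-dimensional fibers of $D_p$ over $Z$ from carrying any non-grouplike (inner) symmetry, so that the whole action of $H_p$ is grouplike.

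It remains to produce primes with $\gcd(d_p,d)=1$, for which it suffices that $\gcd(\ell_p,d)=1$. This is precisely where Perucca's theorem enters: applied to the cyclic group $\langle\mathbf q\rangle$ with Zariski closure $G_{\mathbf q}$, it shows that the density of primes $p$ with $\gcd({\rm ord}(\overline{\mathbf q}),d)=1$ is positive provided no prime $\le d$ is forced to divide ${\rm ord}(\overline{\mathbf q})$ for almost all $p$, and that such a forced divisibility by a prime $\ell$ occurs exactly when $\ell$ divides the order of the torsion of $G_{\mathbf q}$, i.e.\ of the component group $G_{\mathbf q}/G_{\mathbf q}^0$. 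Since $\gcd(f,d!)=1$ with $f=|G_{\mathbf q}/G_{\mathbf q}^0|$ means precisely that every prime factor of $f$ exceeds $d$, no prime $\le d$ is forced, and a positive-density set of primes with $\gcd(\ell_p,d)=1$, hence with $\gcd(d_p,d)=1$, results.

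Finally I would combine the pieces: because non-factoring in characteristic zero persists under reduction modulo all large $p$, it is enough to exhibit a single large prime at which the reduced action factors through a group, and the previous two steps supply one; hence the original $H$-action on $B$ factors through a group. I expect the main obstacle to be the middle step---making rigorous, in the PI setting and without assuming that the action preserves either the span of the $x_i$ or the center of $B_p$, that coprimality of $d_p$ with $d$ forces a semisimple action on the central division algebra $D_p$ to descend to $Z$ and become grouplike---together with extracting from Perucca's work the exact density statement that converts $\gcd(f,d!)=1$ into a positive supply of primes with $\gcd(\ell_p,d)=1$.
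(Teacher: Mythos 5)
Your proposal follows the same overall strategy as the paper---spread out, reduce modulo $p$ so that $B_p$ becomes PI with PI degree controlled by the order of the reduced parameters, use Perucca's theorem to find primes where that order is coprime to the relevant integer, force cocommutativity of $H_p$ at such primes, and lift back to characteristic zero---but two of your steps have genuine gaps.

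The first is a missing idea in the number-theoretic step. Perucca's theorem \cite{Per} is a statement about a $K$-rational point of a torus (or abelian variety) over a \emph{number field} $K$ and its reductions modulo primes of $K$. You apply it directly to $\langle\bold q\rangle\subset(k^\times)^{n(n-1)/2}$, but the $q_{ij}$ are arbitrary elements of $k$ and may be transcendental over $\Bbb Q$; in that case there is no canonical ``reduction of $\bold q$ mod $p$,'' and the order $\ell_p$ you work with depends on the choice of the character $\psi:R\to\overline{\Bbb F}_p$, not just on $p$, so ``positive density of primes $p$ with $\gcd(\ell_p,d)=1$'' has no meaning as stated. The paper repairs exactly this by inserting an intermediate specialization: a \emph{generic} ring homomorphism $\xi:R\to K$ to a number field, which preserves $|G_{\bold q}/G_{\bold q}^0|$ because a generic $\xi$ preserves all multiplicative relations among the $q_{ij}$ (Example~\ref{exaaa}), and only then applies Perucca over $K$---in the form of Corollary~\ref{ntcor}, i.e., after replacing $\bold q$ by $\bold q^{\ell}$ so that the Zariski closure becomes connected. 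The final lifting must then respect this two-stage specialization: cocommutativity of $H_p$ for infinitely many primes of $R'=\xi(R)$ gives cocommutativity of $H_{R'}$, and genericity of $\xi$ gives cocommutativity of $H_R$, hence of $H$. (Your ``one good prime suffices'' shortcut can be salvaged in contrapositive form, but only if the good prime is produced together with a generic $\psi$ and generic $\xi$, which is exactly what the two-stage setup tracks.)

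The second gap is the Hopf-theoretic heart, which you yourself flag as the main obstacle. You claim that $\gcd(d_p,d)=1$ forces a semisimple inner faithful action of $H_p$ on the central division algebra $D_p$ to be grouplike, with a sketch (descend to the center via the integral, then exclude ``residual inner symmetry''). This is not a proof, and the statement with $d$ in place of $d!$ is not available: the result the paper invokes, \cite[Proposition~3.3(ii)]{CEW1}, requires the degree of $D_p$ to be coprime to $d!$ (together with semisimplicity and cosemisimplicity of $H_p$, the latter supplied by \cite[Lemma~2.5]{CEW1}); coprimality with $d$ alone is only known to suffice under the additional Hopf--Galois hypothesis (Proposition~\ref{HopfGal}). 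Fortunately this gap costs nothing to close: the hypothesis of Theorem~\ref{main3} gives $|G_{\bold q}/G_{\bold q}^0|$ coprime to $d!$, so you can apply Corollary~\ref{ntcor} with $r=d!$ to get the order $N$ of the reduced parameters coprime to $d!$; then the PI degree of $D_p$ divides $N^n$, hence is coprime to $d!$, and \cite[Proposition~3.3(ii)]{CEW1} yields cocommutativity of $H_p$ directly. In short, weaken nothing and cite the known rigidity result instead of attempting to reprove it with a weaker coprimality hypothesis.
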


If each $q_{ij}$ is a root of unity of order  $r_{ij}$,  then  $|G_{\bold q}/G_{\bold q}^0| = \text{lcm}\{r_{ij}\}_{i<j}$. In particular,
if $n=2$, i.e. if $B=k\langle x,y\rangle/(xy-qyx)$, 
then the condition on $\bold q=q\in k^\times$ in Theorem \ref{main3intro} is that the order of $q$ is coprime to $d!$ or infinite. On the other hand, the condition on $\bold q$ in Theorem~\ref{main3intro} is also satisfied if each $q_{ij}$ is a non-root of unity and the set of the $q_{ij}$ is multiplicatively independent; here, $|G_{\bold q}/G_{\bold q}^0| = 1$. See Example~\ref{exaaa} for a discussion of how to compute $|G_{\bold q}/G_{\bold q}^0|$ in general. 

One may compare Theorem~\ref{main3intro} to a similar result, \cite[Theorem~4.3]{CWZ}, in the case where the Hopf action preserves the grading of $k_{\bold q}[x_1,\dots, x_n]$. 
But note that without the degree-preserving assumption, semisimplicity is still needed in Theorem~\ref{main3intro}; see \cite{EW2}, \cite[Example~3.6]{EW3} for counterexamples for $n=1, 3$, respectively.

Moreover,  Theorem~\ref{main3intro} is valid for finite dimensional Hopf algebras in the Hopf-Galois case, where we can replace the condition 
``coprime to $d!$" with ``coprime to $d$" (Proposition~\ref{HopfGal}). Also, Theorem~\ref{main3intro} generalizes straightforwardly (with the same proof) to 
actions on the quantum torus $k_{\bold q}[x_1^{\pm 1},\dots,x_n^{\pm 1}]$.

Another generalization of Theorem~\ref{main3intro} to the non-semisimple case can be made under a nondegeneracy assumption. Recall that $\bold q$ may be viewed as a skew-symmetric bicharacter 
on $\Bbb Z^n$ with values in $k^\times$, with $\bold q(e_i,e_j)=q_{ij}$ 
for the standard basis $\lbrace e_i\rbrace$. A   bicharacter 
$\bold q$ is called {\it nondegenerate} if the character $\bold q(a,?): \Bbb Z^n\to k^\times$ is non-trivial whenever $a\ne 0$. Note that unlike skew-symmetric bilinear forms (which are always degenerate in odd dimensions), a skew-symmetric bicharacter can be 
nondegenerate for any $n\ge 2$.

\begin{theorem}[Theorem \ref{main4}]\label{main4intro} Let $H$ be a finite dimensional Hopf algebra of dimension $d$ acting on $B:=k_{\bold q}[x_1,\dots,x_n]$. Assume that the order of $G_{\bold q}/G_{\bold q}^0$ is coprime to $d!$, and $\bold q$ is a nondegenerate bicharacter. Then, the action of $H$ on $B$ factors through a group algebra. 
\end{theorem}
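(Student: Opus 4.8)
The plan is to run the modulo-$p$ reduction machinery that underlies Theorems~\ref{main3intro} and \ref{anyact}, using the nondegeneracy of $\bold q$ to supply the center condition of Definition~\ref{nondege} (in place of semisimplicity) and the coprimality hypothesis, via Perucca's theorem, to control the finite ``component'' symmetries. Concretely, I would first fix a finitely generated subring $R\subset k$ over which $B$, $H$, and the $H$-action are all defined, together with an order $B_R$, and reduce modulo a large prime $p$ to obtain an action of $H_p$ on $B_p=k_{\bold q}[x_1,\dots,x_n]_p$ over $\overline{\mathbb{F}}_p$. Since $\overline{\mathbb{F}}_p^{\times}$ is a torsion group, every $q_{ij}\bmod p$ is a root of unity; hence $B_p$ is module-finite over its center and is a PI domain, with center spanned by the monomials $x^{a}=x_1^{a_1}\cdots x_n^{a_n}$ whose exponent $a$ lies in the radical lattice $L_p=\{a\in\mathbb{Z}^n:\prod_i q_{ij}^{a_i}=1\ \text{in}\ \overline{\mathbb{F}}_p\ \forall j\}$ of the reduced bicharacter. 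Thus $B$ is an algebra with PI reductions in the sense of the Introduction, and the objects $B_{p^m},C_m,D_{p^m},Z(m)$ of Notation~\ref{notation-B} are available.

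The first main step is to verify that $B$ is \emph{nondegenerate} (Definition~\ref{nondege}), i.e.\ that $\bigcap_{m\ge1}Z(m)=\overline{\mathbb{F}}_p$ for almost all $p$; this is what will replace the semisimplicity hypothesis of Theorem~\ref{main3intro}. Here I would use the nondegeneracy of $\bold q$ as a bicharacter on $\mathbb{Z}^n$. A monomial $x^{a}$ surviving into $Z(m)$ for \emph{every} $m$ corresponds to an exponent $a$ lying in the radical of the reduced bicharacter at every truncated Witt level $W_{m,p}$; passing to the limit, such an $a$ lies in the radical of a lift of $\bold q$ to $W(\overline{\mathbb{F}}_p)$, which specializes back to the characteristic-zero parameters. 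Since $\bold q(a,?)$ is nontrivial for every $a\ne0$, the only such $a$ is $a=0$, identifying $\bigcap_m Z(m)$ with the scalars $\overline{\mathbb{F}}_p$.

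The second main step brings in the coprimality hypothesis to control the non-inner part of the action, exactly as in the proof of the semisimple result. Writing $Z={\rm Frac}(C_1)$ for the center of $D_p$, the algebra $D_p$ is a symbol-type division algebra attached to the nondegenerate quotient $\mathbb{Z}^n/L_p$, and any symmetry of this datum that $H_p$ can realize is constrained by the finite group $\langle\bold q\bmod p\rangle\subset(\overline{\mathbb{F}}_p^{\times})^{n(n-1)/2}$ together with the permutations of the coordinates $x_i$. Here I would invoke Perucca's theorem, which for almost all $p$ controls the arithmetic of $\langle\bold q\bmod p\rangle$ in terms of $|G_{\bold q}/G_{\bold q}^{0}|$; since the latter is coprime to $d!=(\dim H)!$, a $d$-dimensional Hopf algebra can realize no nontrivial coordinate permutation or component symmetry, and its image in automorphisms is pushed into the toric, group-like part. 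Feeding this together with the center condition of the first step through the method of Theorem~\ref{anyact} (as in \cite{CEW2}) then shows that the $H_p$-action on $D_p$, and hence the $H$-action on $B$, is implemented by a group; a single good prime $p$ suffices to conclude in characteristic zero by the standard lifting argument, so the action factors through a group algebra.

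I expect the main obstacle to be harmonizing these two inputs uniformly in $p$: one must simultaneously control the Witt-level centers $Z(m)$ at \emph{all} levels $m$ (to obtain $\bigcap_m Z(m)=\overline{\mathbb{F}}_p$) and translate the characteristic-zero hypothesis on $|G_{\bold q}/G_{\bold q}^{0}|$ into the correct coprimality statement about $\langle\bold q\bmod p\rangle$ for almost all $p$. The delicate point is precisely the passage through Perucca's theorem, since a naive reduction of $\bold q$ can introduce spurious $\ell$-divisibility in the orders of the $q_{ij}\bmod p$; the content of the arithmetic input is that such phenomena are confined to the primes dividing $|G_{\bold q}/G_{\bold q}^{0}|$, which the coprimality hypothesis excludes. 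This is also the step where the coprimality condition, rather than the center condition alone, is genuinely needed: without it the component group could force $H$ to permute the graded pieces, producing exactly the kind of finite quantum symmetry the counterexamples for $n=1,3$ exhibit.
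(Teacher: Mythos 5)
Your skeleton --- reduce modulo $p$ and $p^m$, use nondegeneracy of $\bold q$ to force $\bigcap_{m\ge 1}Z(m)=\overline{\Bbb F}_p$, use Perucca's theorem for coprimality, and run the non-semisimple machinery --- is the paper's strategy: its proof of Theorem~\ref{main4} combines the proofs of Theorem~\ref{main3} and Theorem~\ref{anyact1}. However, your execution has a genuine gap exactly where the coprimality hypothesis enters. You claim it is used to show that ``a $d$-dimensional Hopf algebra can realize no nontrivial coordinate permutation or component symmetry.'' A Hopf algebra does not act through a group of automorphisms, so there is no symmetry group to constrain, and nothing in that step constitutes an argument. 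The actual role of coprimality is different and is needed much earlier: the engine of the non-semisimple case, Theorem~\ref{th3.2gen}, requires $H_p$ to act on the field $Z$ \emph{preserving every subfield} $Z(m)$, and this stability is precisely what requires the PI degree of $D_p$ and $D_{p^m}$ (which divides $N^n$, where $N$ is the order of the reduced $\bold q$) to be coprime to $d!$, via \cite[Proposition~3.3(i)]{CEW1} and the generalization of \cite[Lemma~4.6]{CEW2}. In Theorem~\ref{anyact1} this was automatic because there the PI degree is a power of $p$ by \cite{Et2}; for $k_{\bold q}[x_1,\dots,x_n]$ it is exactly where Perucca's conclusion is consumed. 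One also needs, before invoking Theorem~\ref{th3.2gen}, that $Z(m)$ is spanned by the monomials $x_1^{a_1}\cdots x_n^{a_n}$ with $\prod_j q_{ij}^{a_j}=1$ in $W_{m,p}$, and that $[Z:Z(m)]$ is a power of $p$ (because the kernel of $W_{m,p}^\times\to \overline{\Bbb F}_p^\times$ is a $p$-group); you never address this index condition.

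Two further steps would fail as written. First, Perucca's result (Theorem~\ref{ntresult}, Corollary~\ref{ntcor}) concerns a rational point of an abelian variety or torus over a \emph{number field}, so it cannot be applied to $\bold q\in (k^\times)^{n(n-1)/2}$ directly: the paper first specializes by a generic character $\xi\colon R\to K$ to a number field, chosen so that multiplicative relations (hence $|G_{\bold q}/G_{\bold q}^0|$) are preserved; this same genericity is also what makes your ``specializes back to the characteristic-zero parameters'' step legitimate. Moreover, Perucca gives a positive Dirichlet density of primes, not ``almost all'' primes; consequently you cannot verify Definition~\ref{nondege} (which demands almost all $p$) and quote Theorem~\ref{anyact1} as a black box --- the paper instead runs the $Z(m)$ argument directly at the infinitely many good primes produced by Corollary~\ref{ntcor}. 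Second, your claim that ``a single good prime $p$ suffices to conclude in characteristic zero by the standard lifting argument'' is false: cocommutativity of $H_p$ for one prime says nothing about $H_{R'}$, since a nonzero element such as $\Delta-\Delta^{\mathrm{op}}\in H_{R'}\otimes_{R'}H_{R'}$ can vanish modulo a single prime. One needs $H_p$ cocommutative for infinitely many primes to conclude that $H_{R'}$ is cocommutative, and then genericity over $\xi$ to conclude that $H_R$, and hence $H$, is cocommutative, i.e., a group algebra.
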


It is shown in Example~\ref{Swee} that Theorems~\ref{main3intro} and~\ref{main4intro} fail when hypotheses are removed; these examples  involve actions of the non-semisimple 4-dimensional Sweedler Hopf algebra.

\subsection{Twisted homogeneous coordinate rings of abelian varieties and Sklyanin algebras}\label{ellip}
Let $X$ be an abelian variety over $k$, ${\mathcal L}$ be an ample line bundle on $X$, and let $\sigma: X\to X$ be an automorphism given by translation by a point $s \in X$. Then we can define the twisted homogeneous coordinate ring 
$$
B(X,\sigma,{\mathcal L}):=\textstyle \bigoplus_{n=0}^\infty H^0(X,\otimes_{i=0}^{n-1} (\sigma^i)^*{\mathcal{L}}), 
$$ 
with twisted multiplication $f*g:=f(\sigma^n)^*(g)$, where $f$ is of degree $n$ (\cite{AV}). 
It is well-known  that $B(X,\sigma,{\mathcal{L}})$ is a domain, and if $|\sigma|<\infty$, then $B(X,\sigma,{\mathcal{L}})$ is a PI domain of PI degree $|\sigma|$. 

Let $G_\sigma$ be the Zariski closure of the subgroup $\{s^i\}_{i \in \mathbb{Z}}$, and let  $G_\sigma^0$ be the connected component of the identity in $G_\sigma$. 

\begin{theorem}[Theorem~\ref{main5}] \label{main3aintro} If $H$ is a semisimple Hopf algebra of dimension $d$, 
and if the order of $G_\sigma/G_\sigma^0$ 
is coprime to $d!$, then any $H$-action on $B(X,\sigma,{\mathcal{L}})$ 
factors through a group action. 
\end{theorem}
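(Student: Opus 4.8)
The plan is to run the reduction-modulo-$p$ argument exactly as in the proof of Theorem~\ref{main3intro}, replacing the Kummer theory on the torus $(k^\times)^{n(n-1)/2}$ by its analogue for the abelian variety $X$, the required input now being Perucca's theorem on the orders of reductions of points. First I would spread out the data $(X,\sigma,{\mathcal L})$, the point $s$, and a given $H$-action over a finitely generated subring $R\subset k$, producing an $R$-order $B_R$ in $B:=B(X,\sigma,{\mathcal L})$. For all large $p$ and any $\psi:R\to\overline{\mathbb F}_p$, the reduction $X_p$ is an abelian variety over $\overline{\mathbb F}_p$, the reduced point $s_p$ is torsion of some order $N_p$, and $\sigma_p$ has order $N_p$; hence $B_p:=B(X_p,\sigma_p,{\mathcal L}_p)$ is a PI domain of PI degree $N_p$, with central division algebra of fractions $D_p$ of degree $N_p$ over its center $Z_p$. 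Thus the reductions $B_p$ are PI domains and the mod-$p$ machinery of \cite{CEW1,CEW2} applies, the $H$-action reducing to a semisimple $H_p$-action on $B_p$ (after shrinking $R$ so that $H_p$ stays semisimple). Passing to the inner-faithful quotient $\overline{H}$, it suffices to prove $\overline{H}$ is cocommutative, since then $\overline{H}=kG$ and the action factors through $G$; and if $\overline{H}$ were not cocommutative, a nonzero matrix entry witnessing non-cocommutativity over the order would remain nonzero under almost all reductions, so $\overline{H_p}$ would fail to be a group algebra for almost all $p$. Hence it is enough to exhibit infinitely many $p$ for which $\overline{H_p}$ is a group algebra, exactly as in Theorem~\ref{main3intro}.

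The number-theoretic heart is to produce, from the hypothesis that $M:=|G_\sigma/G_\sigma^0|$ is coprime to $d!$, a set of primes of positive density for which $N_p=\mathrm{ord}(s_p)$ is coprime to $d!$. I would decompose using $Y:=G_\sigma^0$, an abelian subvariety of $X$: the component group $G_\sigma/Y$ is cyclic of order $M$, generated by the image of $s$, so the image of $s$ in $X/Y$ is a torsion point of order exactly $M$, a \emph{rigid} contribution to $N_p$ that is coprime to $d!$ by hypothesis, while $s^M\in Y$ generates a Zariski-dense subgroup of $Y$ (and if $\dim Y=0$ then $s$ is torsion of order $M$ and $N_p=M$, so the claim is immediate). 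For each of the finitely many primes $\ell\le d$ dividing $d!$, Perucca's theorem on the orders of reductions of points on abelian varieties shows that the set of $p$ with $\ell\mid\mathrm{ord}((s^M)_p)$ has density strictly less than one, and by the near-independence of these finitely many conditions one finds a positive-density set of $p$ avoiding all of them; for such $p$ one obtains $\gcd(N_p,d!)=1$.

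Finally, for each such good prime $p$ I would show the inner-faithful quotient $\overline{H_p}$ is a group algebra by the same division-algebra argument as in Theorem~\ref{main3intro}. The center $C_p$ of $B_p$ is $H_p$-stable, and by \cite[Theorem~1.3]{EW1} the induced semisimple action on the commutative domain $C_p$ factors through a finite group $\Gamma_p$, whose order divides $d$ since $k\Gamma_p$ is a Hopf quotient of $H_p$. The complementary part of the action fixes $Z_p=\mathrm{Frac}(C_p)$, hence is inner by Skolem--Noether and is governed by a projective representation whose dimension divides $N_p$; since $\gcd(N_p,d!)=1$ and $|\Gamma_p|\le d$, the Clifford-theory and projective-dimension constraints force this inner part to be trivial, so $\overline{H_p}$ is cocommutative, i.e. a group algebra. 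Combining the three steps yields the theorem. I expect the main obstacle to be the number-theoretic step: checking that Perucca's density results apply to the specific point $s^M$ on the possibly nonsimple abelian variety $Y$ and deliver coprimality to all of $d!$ simultaneously. The division-algebra step, by contrast, is essentially identical to the quantum-polynomial case and should transfer with only notational changes.
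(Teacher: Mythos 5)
Your overall strategy coincides with the paper's: spread out the data, reduce modulo $p$, use Perucca's result to make the order $N_p$ of $\sigma_p$ (hence the PI degree of $B_p$) coprime to $d!$, and conclude cocommutativity by the division-algebra argument; your reduction to the point $s^M$, whose generated subgroup has connected Zariski closure $G_\sigma^0$, is exactly the paper's Corollary~\ref{ntcor}. However, your number-theoretic step has a genuine gap. You invoke Perucca only in the weak form that, for each prime $\ell\mid d!$, the set of $p$ with $\ell\mid\mathrm{ord}((s^M)_p)$ has density less than one, and you then combine these finitely many conditions by ``near-independence.'' That combination is not a valid argument: finitely many sets of primes, each of density less than one, can leave only a finite set of primes outside their union (e.g., the two sets of primes lying in the nontrivial residue classes modulo $3$), so no positive-density set avoiding all conditions need exist. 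Fortunately no such argument is needed: \cite[Theorem~7]{Per}, as quoted in the paper's Appendix (Theorem~\ref{ntresult}), states that for a point $g$ whose generated subgroup has \emph{connected} Zariski closure and for \emph{any} fixed positive integer $r$, there is a positive Dirichlet density of primes $\mathfrak p$ with $\mathrm{ord}(g\bmod\mathfrak p)$ coprime to $r$; applying this to $g=s^M$ and $r=d!$ resolves in one stroke precisely the point you flagged as your main obstacle.

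A second gap is that Perucca's theorem concerns points of abelian varieties over a \emph{number field} and reductions modulo primes of that field, whereas you apply it directly to the data over the finitely generated ring $R\subset k$ and to homomorphisms $\psi:R\to\overline{\Bbb F}_p$, where ``Dirichlet density'' has no meaning. As in the paper's proof of Theorem~\ref{main3}, one must first specialize by a generic homomorphism $\xi:R\to K$ to a number field $K$, check that genericity preserves $|G_\sigma/G_\sigma^0|$ (the abelian-variety analogue of Example~\ref{exaaa}), apply Corollary~\ref{ntcor} over $K$, and only then reduce modulo the resulting primes $\mathfrak p$. Two smaller points: your final step re-derives the division-algebra result by a Skolem--Noether/projective-representation sketch, which is in the right spirit but can simply be cited as \cite[Proposition~3.3(ii)]{CEW1}; and for your ``witness of non-cocommutativity'' argument to close, you need that the reduction of the inner faithful quotient still acts inner faithfully on $B_p$ for $p\gg 0$ (the analogue of \cite[Proposition~2.4]{CEW1}), since otherwise the division-algebra argument only constrains a possibly proper quotient of the reduced Hopf algebra.
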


In particular, if the subgroup $\{s^i\}_{i \in \mathbb{Z}}$  is Zariski-dense in $X$, then any semisimple Hopf action on $B(X,\sigma,{\mathcal{L}})$ 
factors through a group action. Moreover,  if $X=:E$ is an elliptic curve, the condition on $\sigma$ in Theorem~\ref{main3aintro} is that the order of $\sigma$ is coprime to $d!$ or infinite. 

Lastly, we study semisimple Hopf actions on another class of quantizations: the {\it 3-dimensional Sklyanin algebras} $S(a,b,c)$ (Definition~\ref{def:Skly3thcr}).
To $S(a,b,c)$, one can naturally associate an elliptic curve $E_{abc}\subset \mathbb{P}^2_k$ and an automorphism $\sigma_{abc}$ given by translation by a point. (See \cite[Introduction]{ATV}.)

\begin{theorem} \label{mainSklyintro}
If $H$ is a semisimple Hopf algebra of dimension $d$, and if the order of  $\sigma_{abc}$ 
is coprime to $d!$ or infinite, then any $H$-action on $S(a,b,c)$ 
factors through a group action. 
\end{theorem}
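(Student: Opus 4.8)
The plan is to reduce Theorem~\ref{mainSklyintro} to the already-established Theorem~\ref{main3aintro} for twisted homogeneous coordinate rings, exploiting the fundamental structural link between a $3$-dimensional Sklyanin algebra and its elliptic-curve data $(E_{abc},\sigma_{abc})$. Recall from \cite{ATV} that $S:=S(a,b,c)$ is a connected graded Noetherian Artin--Schelter regular domain containing a central, non-zero-divisor element $g$ of degree $3$, and that the central quotient is the twisted homogeneous coordinate ring
$$ S/gS \;\cong\; B(E_{abc},\sigma_{abc},\mathcal{L}), \qquad \mathcal{L}=\mathcal{O}_{E_{abc}}(1). $$
Moreover $S$ is a PI domain precisely when $|\sigma_{abc}|<\infty$, in which case its PI degree is $|\sigma_{abc}|$, exactly as for $B(E_{abc},\sigma_{abc},\mathcal{L})$. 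The first step is to translate the hypothesis: since $E_{abc}$ is an elliptic curve and $\sigma_{abc}$ is translation by a point $s$, the Zariski closure $G_\sigma$ of $\{s^i\}_{i\in\mathbb{Z}}$ is finite of order $|\sigma_{abc}|$ (so $G_\sigma^0=\{e\}$ and $|G_\sigma/G_\sigma^0|=|\sigma_{abc}|$) when $\sigma_{abc}$ has finite order, and equals all of $E_{abc}$ (so $G_\sigma^0=G_\sigma=E_{abc}$ and $|G_\sigma/G_\sigma^0|=1$) when $\sigma_{abc}$ has infinite order, as already observed after Theorem~\ref{main3aintro}. Hence ``$|\sigma_{abc}|$ coprime to $d!$ or infinite'' is exactly the hypothesis ``$|G_\sigma/G_\sigma^0|$ coprime to $d!$'' of Theorem~\ref{main3aintro} applied to $B:=B(E_{abc},\sigma_{abc},\mathcal{L})$.

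Granting the translation, the conceptually cleanest route would be to transport the given semisimple $H$-action on $S$ to the quotient $B$, invoke Theorem~\ref{main3aintro} to conclude it factors through a group there, and then lift this back to $S$. I expect the descent-and-lift to be the \emph{main obstacle}: we do \emph{not} assume the action preserves the grading, so it is not automatic that the specific degree-$3$ central element $g$, or the ideal $gS$, is $H$-stable, and even once $H$ acts on $B$ one must check that the $S$-action is faithfully detected on $B$ (that no part of $H$ acts trivially on $B$ but nontrivially on $S$) in order to promote the conclusion. A natural way to address the first point is to characterize $gS$ intrinsically — for instance via the induced $H$-action on the center $Z(S)$, on which any Hopf action necessarily restricts — so that $H$-stability of $gS$ becomes forced; but making this unconditional over $k$ is precisely the delicate part.

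For this reason, the route I would actually carry out bypasses the descent issue entirely and runs in complete parallel with the proof of Theorem~\ref{main3aintro}, namely by reduction modulo $p$. One spreads out $S$, $H$, and the action over a finitely generated subring $R\subset k$ and reduces modulo large primes to obtain $S_p$ over $\overline{\mathbb{F}}_p$. Since $E_{abc}(\overline{\mathbb{F}}_p)$ is a torsion group, the point $s$ reduces to a point $s_p$ of finite order $N_p$, so $\sigma_{abc}$ reduces to $\sigma_p$ of order $N_p$ and $S_p$ becomes PI of PI degree $N_p$ — the same PI degree as the reduction $B_p$, because $g$ reduces to a central non-zero-divisor $g_p$ with $S_p/g_pS_p\cong B_p$. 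One then works with the generic division algebra $D_p=\mathrm{Frac}(S_p)$, of degree $N_p$ over its center, and applies the general mechanism for semisimple Hopf actions on PI reductions: factor the action on the central field through a group by the commutative case, and propagate it to $D_p$ by a Skolem--Noether argument, which forces factorization through a group action whenever $\gcd(N_p,d!)=1$. Crucially, working with $S_p$ (rather than $g$ over $k$) sidesteps the need to preserve any grading or any fixed central element over the base field.

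The one remaining ingredient, which I expect to be the genuine hard part, is purely number-theoretic: one must produce a sufficiently large (positive-density) set of primes $p$ for which $\gcd(N_p,d!)=1$, given the hypothesis that $|G_\sigma/G_\sigma^0|$ is coprime to $d!$. This is exactly where the result of Perucca on the orders $N_p$ of the reductions of a point on an abelian variety is invoked, controlling the divisibility of $N_p$ as $p$ varies. Because $S_p$ and $B_p$ share the PI degree $N_p$ governed by $\sigma_p$, the entire argument is word-for-word that of Theorem~\ref{main3aintro}, and no new arithmetic input beyond Perucca's theorem is required for the Sklyanin case.
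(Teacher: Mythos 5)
Your proposal is correct and takes essentially the same route as the paper: the paper's proof also rests on the Artin--Schelter--Tate fact that $S(a,b,c)$ is PI of PI degree equal to the order of $\sigma_{abc}$, and then runs the reduction-modulo-$p$ argument of Theorem~\ref{main3} with Corollary~\ref{ntcor} (Perucca's result) applied to the elliptic curve $E_{abc}$, obtaining infinitely many primes for which the reduced PI degree is coprime to $d!$ and hence forcing $H_p$, and ultimately $H$, to be cocommutative. Your preliminary discussion (and correct rejection) of the descent-to-$B(E_{abc},\sigma_{abc},\mathcal{L})$ strategy is extra, but the argument you actually carry out coincides with the paper's.
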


\begin{remark}
We believe that by adapting the techniques in this work, one could establish a version of Theorem~\ref{mainSklyintro} for semisimple Hopf actions on other elliptic algebras, such as in \cite{Skl} (or, see \cite{SS}) and in \cite{EG}, \cite{Ode}, \cite{Ste}. 
Further, we believe that under an appropriate nondegeneracy condition, there are No Finite Quantum Symmetry results for such elliptic algebras and for twisted homogeneous coordinate rings $B(X,\sigma,\mathcal{L})$ as above; compare to Theorem~\ref{main4intro}.
\end{remark}

Our paper is organized as follows. We discuss semisimple Hopf actions on filtered quantizations in Section 2, non-semisimple Hopf actions on filtered quantizations in Section 3, 
semisimple Hopf actions on quantum polynomial algebras in Section 4, non-semisimple Hopf actions on quantum polynomial algebras in Section 5, and Hopf actions on twisted homogeneous coordinate rings of abelian varieties and Sklyanin algebras in Section 6. 
 The results of Sections 4-6 rely on a number-theoretic result of Antonella Perucca discussed in the Appendix (Section 7). 
 \smallskip
 
\begin{center} 
The notation and terminology of the introduction is used\\ throughout this work, often without mention.
\end{center}

\section{Semisimple Hopf actions on filtered quantizations} \label{sec:filtered}

\subsection{The result on semisimple Hopf actions on quantizations  with PI reductions}

In this section, we study actions of semisimple Hopf algebras $H$ on filtered quantizations $B$.
Throughout this section, we let $B$ denote a $\Bbb Z_+$-filtered algebra over $k$ such that ${\rm gr}(B)$ 
is a commutative finitely generated domain; such $B$ will be referred to as a filtered quantization. 

Our goal is to prove Theorem~\ref{main1intro} (Theorem~\ref{main1} below). This result was established in \cite{CEW1} for $B$ being a Weyl algebra and we generalize the techniques of \cite{CEW1} for our purpose here.

Let $R$ be a finitely generated subring of $k$. By an {\it $R$-order} in a filtered quantization $B$ 
we  mean an $R$-subalgebra $B_R$ of $B$ such that ${\rm gr}(B_R)$ 
is a finitely generated $R$-algebra which is projective as an 
$R$-module, and so that the natural map ${\rm gr}(B_R)\otimes_R k\to {\rm gr}(B)$ is an isomorphism of graded $k$-algebras.  

\begin{lemma}\label{exis} \textnormal{(i)} Any filtered quantization $B$ admits an $R$-order $B_R$ for a suitable ring $R$. 

\textnormal{(ii)} For any two orders $B_{R}$ over $R$ and $B_{R'}$ 
over $R'$, there exists a finitely generated ring $R''\subset k$ containing $R,R'$ 
and an $R''$-algebra isomorphism $\phi: B_R\otimes_{R}R''\to B_{R'}\otimes_{R'}R''$ 
such that ${\rm gr}\phi$ is an isomorphism. 
\end{lemma}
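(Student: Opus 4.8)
The plan is to prove Lemma~\ref{exis} in two parts, constructing orders and then comparing them.

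\medskip

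\emph{Existence (part (i)).} Since $B$ is a $\mathbb{Z}_+$-filtered $k$-algebra with ${\rm gr}(B)$ a finitely generated commutative domain, I would first fix finitely many generators. Choose $b_1,\dots,b_N \in B$ whose images $\bar b_1,\dots,\bar b_N \in {\rm gr}(B)$ generate ${\rm gr}(B)$ as a $k$-algebra; I may assume each $b_i$ is homogeneous of some filtered degree $d_i$ in the sense that $b_i \in F_{d_i}B \setminus F_{d_i-1}B$ with leading symbol $\bar b_i$. The products $b_1,\dots,b_N$ then generate $B$ as a filtered $k$-algebra. Now the key point is that there are only finitely many structural relations to account for. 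The commutative domain ${\rm gr}(B)$ is a quotient of the polynomial ring $k[X_1,\dots,X_N]$ by a finitely generated ideal $I$ (using Hilbert's basis theorem / Noetherianity), so pick finitely many generating relations $r_\ell(\bar b) = 0$. Lifting each such relation to $B$, the element $r_\ell(b_1,\dots,b_N)$ lies in strictly lower filtered degree, hence can be rewritten as a $k$-linear combination of ordered monomials in the $b_i$ of lower degree. Collecting all the finitely many scalars appearing: (a) in the lifted commutation/defining relations, and (b) in any further relations needed to close the set of ordered monomials under multiplication into a PBW-type spanning set, I let $R \subset k$ be the finitely generated $\mathbb{Z}$-subalgebra they generate (inverting any leading scalars if necessary). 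Define $B_R$ to be the $R$-subalgebra of $B$ generated by $b_1,\dots,b_N$. By construction ${\rm gr}(B_R)$ is the $R$-algebra generated by the symbols $\bar b_i$ with the reductions of the finitely many relations, so it is finitely generated over $R$; after possibly enlarging $R$ (localizing to clear denominators of a chosen $R$-basis of ordered monomials) one arranges that ${\rm gr}(B_R)$ is a \emph{free} — hence projective — $R$-module with basis the admissible ordered monomials, and that ${\rm gr}(B_R)\otimes_R k \to {\rm gr}(B)$ sends this basis to a $k$-basis of ${\rm gr}(B)$, giving the required graded isomorphism.

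\medskip

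\emph{Comparison (part (ii)).} Given two orders $B_R \subset B$ and $B_{R'} \subset B$, both are $R$- (resp.\ $R'$-) subalgebras of the \emph{same} $k$-algebra $B$. Let $S$ be the subring of $k$ generated by $R$ and $R'$. I would express the chosen algebra generators of $B_R$ in terms of those of $B_{R'}$ inside $B$: each generator of $B_R$ is a $k$-linear combination of ordered monomials, and since only finitely many generators and hence finitely many coefficients are involved, all these coefficients lie in some finitely generated extension $R''$ of $S$ inside $k$ (enlarge $S$ to include them, and symmetrically include the coefficients needed to write the generators of $B_{R'}$ in terms of those of $B_R$). Then $B_R \otimes_R R''$ and $B_{R'} \otimes_{R'} R''$ are both identified with the same $R''$-subalgebra of $B$, giving a canonical $R''$-algebra isomorphism $\phi$. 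Because the generators of each order have the prescribed leading symbols and the change-of-generator matrices are filtered (upper-triangular with respect to filtered degree, with the lower-order corrections involving the adjoined scalars), ${\rm gr}\,\phi$ is the induced map on associated graded algebras, which is an isomorphism of graded $R''$-algebras — both sides being $R''$-forms of ${\rm gr}(B)$ with matching bases of ordered monomials after a further harmless enlargement of $R''$.

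\medskip

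\emph{Main obstacle.} The delicate point is the passage from \emph{finite generation of ${\rm gr}(B_R)$ as an algebra} to \emph{projectivity/freeness as an $R$-module}, together with the compatibility ${\rm gr}(B_R)\otimes_R k \cong {\rm gr}(B)$. A priori the integral span of ordered monomials need not be free over $R$, and reductions could collapse; the standard remedy, which I would invoke, is generic flatness (the Noether–Heitmann / generic freeness lemma): for a finitely generated algebra over a domain $R$, there is a nonzero $f \in R$ such that the localization at $f$ becomes free. Thus the real work is bookkeeping — choosing generators with controlled leading symbols, and repeatedly enlarging (localizing) $R$ to clear denominators and to secure freeness — while ensuring these enlargements stay finitely generated. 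In part (ii), the analogous obstacle is guaranteeing that ${\rm gr}\,\phi$ is an isomorphism rather than merely a filtered map; this again reduces to matching two free bases of ordered monomials over a common $R''$, which holds after a final localization.
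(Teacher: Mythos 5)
Your proposal is correct and takes essentially the same route as the paper's proof: lift finitely many homogeneous generators and defining relations of ${\rm gr}(B)$ (Hilbert basis theorem), let $R$ be generated by the coefficients of the lifted relations, and localize at a suitable element so that Grothendieck's Generic Freeness Lemma makes ${\rm gr}(B_R)$ free (hence projective) over $R$; for (ii), both orders are identified inside $B$ itself and the resulting identity isomorphism is defined over a finitely generated ring $R''$. The only cosmetic difference is your ordered-monomial/PBW bookkeeping, which the paper bypasses by observing that the lifted relations are automatically defining because they are defining at the graded level.
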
 

\begin{proof} (i) By the Hilbert basis theorem, the algebra ${\rm gr}(B)$ is finitely presented. This implies that so is $B$, as 
we can lift the generators and defining relations of ${\rm gr}(B)$ to those of $B$.  

More specifically, pick homogeneous generators $\bar b_1,\dots,\bar b_n$ of ${\rm gr}(B)$
of degrees $m_1,\dots,m_n$. Let 
$$
p_s(\bar b_1,\dots,\bar b_n)=0,\ 
s=1,\dots,r
$$ 
be a set of defining relations for ${\rm gr}(B)$, 
with $p_s\in k[X_1,\dots,X_n]$ being homogeneous of degree $d_s$ (this set may be chosen to be  finite by the Hilbert basis theorem). 
Let $b_j$ be lifts of $\bar b_j$ to $B$, and $\widetilde{p}_s$ be homogeneous lifts 
of $p_s$ to $k\langle X_1,\dots,X_n\rangle$. Then $[b_i,b_j]=f_{ij}(b_1,\dots,b_n)$, where 
$f_{ij}\in k\langle X_1,\dots,X_n\rangle$ is a noncommutative polynomial  
of filtration degree at most $m_i+m_j-1$, and 
$\widetilde{p}_s(b_1,\dots,b_n)=p_s'(b_1,\dots,b_n)$, where 
$p_s'\in k\langle X_1,\dots,X_n\rangle$ is a noncommutative polynomial  of filtration degree at most 
$d_s-1$. 

Let $g_s:=\widetilde{p}_s-p_s'\in k\langle X_1,\dots,X_n\rangle$. Thus, we have relations
\begin{equation}\label{rela}
[b_i,b_j]=f_{ij}(b_1,\dots,b_n) ~~\text{ and }~~ g_s(b_1,\dots,b_n)=0
\end{equation}
in $B$. It is easy to see that these relations are defining, since they are already defining at the graded level. 

Using relations \eqref{rela}, we can find a suitable finitely generated subring 
$R\subset k$ and define $B_R\subset B$ as follows. We take $\widetilde{R}$ to be the ring generated 
by all the coefficients of the polynomials $f_{ij}$, $g_s$, and set $R=\widetilde{R}[1/f]$ for a suitable $f\in \widetilde{R}$. Now let $B_R$ be the subalgebra of $B$ generated over $R$ by $b_1,\dots,b_n$. We  can choose $f$ so that \eqref{rela} are defining relations for $B_R$, and so that $B_R$ is an $R$-order on $B$,  since  for a suitable choice of $f$,
${\rm gr}(B_R)$ is a projective (in fact, free) $R$-module  by Grothendieck's Generic Freeness Lemma (\cite[Theorem~14.4]{Eis}).  
 This proves (i). 
 
\smallskip

(ii) Note that we have a natural isomorphism of filtered algebras 
\linebreak $\widetilde{\phi}: B_R\otimes_R k\to B_{R'}\otimes_{R'} k$ (as both are equal to $B$). 
This isomorphism is defined over some finitely generated ring $R''\subset k$ containing $R,R'$, 
which implies (ii). 
\end{proof} 

\begin{lemma} \label{lem:filtered} Suppose that $B$ is a filtered quantization that 
carries an action of a finite dimensional Hopf algebra $H$. Let $S$ be a finitely generated subring of $k$, and 
$B_S$ be an $S$-order in $B$. Then, one can find a finitely generated subring $R\subset k$ containing $S$ 
and a Hopf order $H_R\subset H$ (cf. \cite[Definition 2.1]{CEW1}) so that there is an induced action of $H_R$ on $B_R:=B_S\otimes_S R$
which gives the action of $H$ on $B$ upon tensoring over $R$ with $k$.
\end{lemma}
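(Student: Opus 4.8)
The plan is to realize $B_R$ as an $R$-order obtained from $B_S$ by adjoining finitely many elements, and then to descend the $H$-action relation by relation, enlarging the base ring finitely often so that all structure constants become defined over it. First I would fix an $R$-order of $H$: since $H$ is a finite dimensional Hopf algebra over $k$, choose a $k$-basis $h_1,\dots,h_N$ of $H$, and let the structure constants of the multiplication, unit, comultiplication, counit, and antipode with respect to this basis generate a finitely generated subring $S_0\subset k$; the $S_0$-span $H_{S_0}$ of the $h_\alpha$ is then a Hopf order over $S_0$ in the sense of \cite[Definition 2.1]{CEW1}. This is standard and will only require enlarging the base ring by the (finitely many) structure constants together with whatever is needed to invert denominators so that $H_{S_0}$ is a Hopf order.

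Next I would descend the action. Recall from Lemma~\ref{exis}(i) that $B_S$ is generated as an $S$-algebra by lifts $b_1,\dots,b_n$ of homogeneous generators of ${\rm gr}(B)$. The $H$-action on $B$ is determined by the elements $h_\alpha\actl b_j\in B$, each of which lies in $B$ and hence in some $S$-order; since each $h_\alpha\actl b_j$ is a (filtered) polynomial in $b_1,\dots,b_n$ with coefficients in $k$, I would enlarge $S$ to a finitely generated ring $S_1\supset S\cup S_0$ containing all these finitely many coefficients, so that $h_\alpha\actl b_j\in B_{S_1}:=B_S\otimes_S S_1$ for all $\alpha,j$. Because the generators $b_j$ generate $B_{S_1}$ as an algebra and the $h_\alpha$ are an $S_1$-basis of $H_{S_1}:=H_{S_0}\otimes_{S_0}S_1$, the formula for the action on a product is governed by the coproduct of $H$ via $h\actl(xy)=(h_{(1)}\actl x)(h_{(2)}\actl y)$, whose structure constants already lie in $S_0\subset S_1$; thus the action of $H_{S_1}$ extends uniquely to all of $B_{S_1}$ as an $S_1$-linear map, provided it is well-defined.

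The main obstacle, and the step requiring a final enlargement of the base ring, is checking that this prescribed $H_{S_1}$-action is genuinely an action on $B_{S_1}$, i.e.\ that it is compatible with the defining relations \eqref{rela} and with the Hopf axioms over $S_1$ rather than merely over $k$. Concretely, the identities $h\actl[b_i,b_j]=h\actl f_{ij}(b_1,\dots,b_n)$ and $h\actl g_s(b_1,\dots,b_n)=0$, as well as the module-algebra axioms (associativity $h\actl(h'\actl x)=(hh')\actl x$ and unitality), hold as identities in $B$ over $k$; each such identity is an equality of elements of $B$ expressed in the basis coming from the $b_j$, so it amounts to finitely many equations among elements of $k$. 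Since there are only finitely many generators $b_j$, basis elements $h_\alpha$, and relations $f_{ij},g_s$, there are only finitely many such equations. I would therefore enlarge $S_1$ once more to a finitely generated ring $R\supset S_1$ over which all of these finitely many equations hold, which is possible since each holds in $k\supset S_1$ and involves only finitely many elements. Setting $B_R:=B_S\otimes_S R$ and $H_R:=H_{S_0}\otimes_{S_0}R$, the action of $H_R$ on $B_R$ is then well-defined and, by construction, recovers the $H$-action on $B$ after applying $-\otimes_R k$. The only subtlety to watch is ensuring that in passing from $S_1$ to $R$ no further denominators are needed to keep $B_R$ an $R$-order and $H_R$ a Hopf order; this is handled exactly as in the proof of Lemma~\ref{exis}(i) by inverting a single suitable element via Grothendieck's Generic Freeness Lemma (\cite[Theorem~14.4]{Eis}), absorbing that localization into the definition of $R$.
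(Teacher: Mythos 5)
Your proposal is correct and follows essentially the same route as the paper's proof: pick a basis $\{h_\alpha\}$ of $H$ and lifts $b_j$ of homogeneous generators of ${\rm gr}(B_S)$, write each $h_\alpha\cdot b_j$ as a noncommutative polynomial in the $b_j$, and adjoin the structure constants of $H$ together with the coefficients of these polynomials to obtain $R$, with $H_R$ the $R$-span of the basis. Your final enlargement step is actually superfluous: since ${\rm gr}(B_S)$ is projective over $S$, the algebra $B_S$ is flat over $S$, so $B_R=B_S\otimes_S R$ embeds in $B$ and the module-algebra identities (already valid in $B$) hold in $B_R$ automatically once all elements involved are defined over $R$; likewise $B_R$ is automatically an $R$-order because base change of a projective module is projective, so no second appeal to generic freeness is needed.
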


\begin{proof} We use the method of \cite[Section~2]{CEW1}. 
Pick homogeneous generators $\bar b_1,\dots,\bar b_n$ of ${\rm gr}(B_S)$,
and let $b_j$ be lifts of $\bar b_j$ to $B_S$.
Choose a basis $\lbrace{h_m\rbrace}$ of $H$. We have 
\begin{equation}\label{actfor}
h_m \cdot b_j =q_{mj}(b_1,\dots,b_n)
\end{equation}
for some noncommutative polynomials $q_{mj}\in k\langle X_1,\dots,X_n\rangle$. 
Let $R$ be generated over $S$ by the structure constants of $H$ in the basis $\lbrace h_m\rbrace$ 
and the coefficients of $q_{mj}$. Let $H_R\subset H$ be the span of $h_m$ over $R$.  
Then, $H_R\subset H$ is a Hopf order, and $H_R$ acts on $B_R$
by formula \eqref{actfor}. 
The lemma is proved.   
\end{proof}

Thus, any action of $H$ on $B$ admits an $R$-order for some finitely generated ring $R\subset k$. Moreover, it is easy to see from Lemma \ref{exis}(ii) that any two such 
orders over rings $R,R'$ can be identified after tensoring with some finitely generated 
ring $R''\subset k$ containing $R,R'$, so an order is essentially unique. 

Now fix a ring $R$ and an $R$-order $B_R\subset B$ with an action of $H_R$. 
Let $p$ be a sufficiently large prime, and $\psi: R\to \overline{\Bbb F}_p$ be a character. 
Following \cite[Section~2]{CEW1},  let $H_p := H_R\otimes_R \overline{\Bbb F}_p$, $B_p := B_R\otimes_R \overline{\Bbb F}_p$  be the corresponding reductions of $H,B$ modulo $p$,
where $\overline{\Bbb F}_p$ is an $R$-module via $\psi$. Then, $H_p$ acts on $B_p$ (by applying $\psi$ to the action of $H_R$ on $B_R$). 

\begin{lemma}  \label{lem:EGA} For a sufficiently large prime $p$, ${\rm gr}(B_p)$, and hence $B_p$, is a domain. 
\end{lemma}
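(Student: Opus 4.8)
The plan is to reduce the claim to a statement about the associated graded algebra and then to a standard spreading-out result for geometric integrality of fibers. First I would observe that it suffices to prove that ${\rm gr}(B_p)$ is a domain. Indeed, $B_p$ is $\Bbb Z_+$-filtered with associated graded ${\rm gr}(B_p)$, so if $a,b\in B_p$ are nonzero with leading symbols $\sigma(a),\sigma(b)\in{\rm gr}(B_p)$, then $\sigma(a)\sigma(b)\neq 0$ once ${\rm gr}(B_p)$ is a domain, forcing $\sigma(ab)=\sigma(a)\sigma(b)$ and hence $ab\neq 0$.

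Next I would identify the associated graded of the reduction. Since ${\rm gr}(B_R)$ is $R$-free, each quotient ${\rm gr}_i(B_R)$ is $R$-free, so the short exact sequences $0\to F_{i-1}B_R\to F_iB_R\to{\rm gr}_i(B_R)\to 0$ are split and remain exact after the base change $R\to\overline{\Bbb F}_p$ given by $\psi$; therefore ${\rm gr}(B_p)\cong{\rm gr}(B_R)\otimes_R\overline{\Bbb F}_p$. Write $A:={\rm gr}(B_R)$. This is a finitely generated $R$-algebra which is $R$-free, hence $R$-torsion-free, so it embeds into $A\otimes_R k\cong{\rm gr}(B)$; in particular $A$ is a commutative domain.

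The heart of the argument is the reduction statement that $A\otimes_R\overline{\Bbb F}_p$ is a domain for all large $p$ and every $\psi$. Let $K={\rm Frac}(R)$ and fix an algebraic closure $K\subset\bar K\subset k$. By $R$-flatness of $A$, $A\otimes_R\bar K$ is a subring of $A\otimes_R k\cong{\rm gr}(B)$, hence a domain; equivalently, the generic fiber $A\otimes_R K$ is geometrically integral over $K$. The morphism ${\rm Spec}(A)\to{\rm Spec}(R)$ is flat and of finite presentation, so by the standard spreading-out theorems for geometric integrality of fibers (EGA IV, 9.7.7 and 12.2.4) the set $U\subseteq{\rm Spec}(R)$ of points with geometrically integral fiber is open; it contains the generic point, hence is dense. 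Shrinking $U$ to a principal dense open --- that is, enlarging the element $f$ inverted in the construction of Lemma~\ref{exis}(i), which under further localization preserves both the $R$-freeness of ${\rm gr}(B_R)$ and the property that the relations \eqref{rela} are defining --- I may assume $U={\rm Spec}(R)$. Then for every $\psi\colon R\to\overline{\Bbb F}_p$ the geometric fiber $A\otimes_R\overline{\Bbb F}_p$ is geometrically integral, hence a domain; and since ${\rm Spec}(R)\to{\rm Spec}(\Bbb Z)$ is dominant of finite type, such characters $\psi$ exist for all but finitely many $p$, which is the meaning of ``sufficiently large $p$''.

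I expect the main obstacle to be precisely this last genericity: integrality of the reductions is \emph{not} automatic for an arbitrary order, since the locus of non-integral fibers can dominate ${\rm Spec}(\Bbb Z)$ and thereby produce non-integral reductions over infinitely many primes (for instance, a smooth conic over $K$ may degenerate to a union of two lines along a subvariety meeting every characteristic). What rescues the statement is the freedom to build the inversion of this discriminantal locus into the order through the choice of $f$ in Lemma~\ref{exis}, justified by the openness and density of the geometrically-integral locus. A secondary technical point is the identification ${\rm gr}(B_p)\cong{\rm gr}(B_R)\otimes_R\overline{\Bbb F}_p$, which rests on the $R$-freeness of ${\rm gr}(B_R)$ so that passage to the associated graded commutes with reduction modulo $p$.
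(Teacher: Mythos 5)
Your proof is correct and takes essentially the same approach as the paper's own (two-sentence) proof: identify ${\rm gr}(B_p)$ with ${\rm gr}(B_R)\otimes_R\overline{\Bbb F}_p$ and deduce that the special fibers are domains from geometric integrality of the generic fiber via \cite[9.7.7]{EGA}. The details you supply are exactly the ones the paper suppresses---notably the genuine subtlety that the dense open locus of geometrically integral fibers must be built into the order by enlarging the element $f$ inverted in Lemma~\ref{exis}(i), so that the conclusion holds for every $\psi$---and your only inaccuracy is harmless: \cite[9.7.7]{EGA} gives local constructibility rather than openness (openness via 12.2.4 would require properness), but a constructible set containing the generic point already contains the dense open subset that your argument actually uses.
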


\begin{proof} We have ${\rm gr}(B_p)={\rm gr}(B_R)\otimes_R \overline{\Bbb F}_p$. 
Therefore, the statement follows from \cite[9.7.7(i)]{EGA} 
(``geometric irreducibility'').
\end{proof}

\begin{theorem}\label{main1} 
If $B$ is  an algebra with PI reductions, then 
any semisimple Hopf action on $B$ 
factors through a group action. 
\end{theorem}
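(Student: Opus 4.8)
The plan is to reduce a semisimple Hopf action on $B$ to a group action by passing to the reductions $B_p$ modulo large primes $p$ and exploiting that these are PI domains. First I would use Lemmas~\ref{lem:filtered} and~\ref{lem:EGA} to fix a suitable finitely generated ring $R\subset k$, an $R$-order $B_R\subset B$ carrying a compatible action of a Hopf order $H_R\subset H$, and a character $\psi\colon R\to\overline{\Bbb F}_p$ for a sufficiently large prime $p$, so that $B_p=B_R\otimes_R\overline{\Bbb F}_p$ is a domain and $H_p=H_R\otimes_R\overline{\Bbb F}_p$ acts on it. Because $B$ is an algebra with PI reductions, $B_p$ is moreover a PI domain, so by Posner's and Ore's theorems its full ring of fractions $D_p=\mathrm{Frac}(B_p)$ is a central division algebra, finite-dimensional over its center $Z$.

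The core of the argument, following the strategy of \cite[Theorem~4.1]{CEW1}, is to show that the $H_p$-action on $B_p$ is \emph{inner} in an appropriate sense modulo $p$, and that the semisimplicity of $H$ (equivalently, for large $p$, of $H_p$) forces the action to come from a group. The key step I would carry out is to extend the $H_p$-action to the division algebra $D_p$, and then analyze the action on the center $Z$. Since $H_p$ acts by algebra automorphisms preserving the domain structure, it permutes the localization data and hence acts on $Z$; semisimplicity should guarantee that this action on the commutative field $Z$ factors through a finite group, by the commutative No Finite Quantum Symmetry result \cite[Theorem~1.3]{EW1}. One then studies the action on $D_p$ as a module over its center: the relative action (trivial on $Z$) should be realized by inner automorphisms via a Skolem--Noether type argument for the central simple algebra $D_p$ over $Z$, producing a projective representation or a cocycle that, for a \emph{semisimple} Hopf algebra, must be trivializable.

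The main obstacle I anticipate is controlling the interplay between the Hopf-algebraic structure and the arithmetic of the reductions uniformly in $p$: I must ensure that the conclusion (the action factoring through a group) descends back from infinitely many primes $p$ to the original characteristic-zero algebra $B$. Concretely, factoring through a group action for $H$ acting on $B$ means the action kernel contains the Jacobson radical of the appropriate coalgebra structure, or that the associated map $H\to\mathrm{End}_k(B)$ has image spanned by grouplike elements; one typically encodes this by a system of polynomial equations with coefficients in $R$, verifies it over $\overline{\Bbb F}_p$ for a Zariski-dense set of primes (equivalently, for all large $p$), and then invokes a Nullstellensatz/spreading-out argument to conclude it holds over $k$. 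Making the ``for all sufficiently large $p$'' quantifier interact correctly with ``any choice of $\psi$'' --- which is exactly the content guaranteed by the PI reduction hypothesis and Lemma~\ref{exis}(ii) --- is where the care is needed.

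Finally, I would assemble these pieces: the semisimplicity of $H_p$ (inherited from $H$ for large $p$) kills the obstruction cocycle, so the $H_p$-action on $B_p$ is implemented by grouplike elements, i.e.\ factors through $\overline{\Bbb F}_p[\Gamma_p]$ for a finite group $\Gamma_p$; a counting/boundedness argument caps $|\Gamma_p|$ independently of $p$, and lifting this uniform structure back through the order $R$ shows that the original $H$-action on $B$ factors through a group algebra $k[\Gamma]$, completing the proof.
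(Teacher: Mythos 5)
Your setup (orders, Hopf orders, reduction modulo large $p$, $B_p$ a PI domain with division ring of fractions $D_p$) matches the paper, but the core of your argument misses the one idea the paper's proof actually hinges on, and replaces it with steps that would fail. The missing idea is the PI degree theorem \cite[Corollary~3.2(ii)]{Et2}: for a filtered quantization with PI reductions, the central division algebra $D_p$ has degree $p^n$, a \emph{power of $p$}. Once this is known, and once one knows that $H_p$ is semisimple and cosemisimple (\cite[Lemma~2.5]{CEW1}) and acts inner faithfully on $D_p$ (\cite[Proposition~2.4]{CEW1} plus \cite[Theorem~2.2]{SV}), the coprimality of $\deg D_p = p^n$ with $(\dim H)!$ lets one invoke \cite[Proposition~3.3(ii)]{CEW1} to conclude that $H_p$ is cocommutative; cocommutativity then descends to $H$ (it is the linear condition $\Delta=\Delta^{\mathrm{op}}$ on the order $H_R$, checked at infinitely many primes), and a finite dimensional cocommutative Hopf algebra over $k$ is a group algebra. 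No control of $\deg D_p$ appears anywhere in your proposal, and such control is genuinely necessary: there exist inner faithful actions of noncommutative, noncocommutative semisimple Hopf algebras on PI domains, e.g.\ the $8$-dimensional semisimple Hopf algebra acting on $k_{-1}[x,y]$ (\cite[Example~7.6]{KKZ}), so any argument that treats $D_p$ as an arbitrary central division algebra --- as yours does --- cannot succeed.

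Two of your intermediate steps are also incorrect as stated. First, a Hopf algebra does not act by algebra automorphisms (only its grouplike elements do), so it is not automatic that $H_p$ ``acts on $Z$''; stability of the center under the action is a nontrivial claim, and in \cite[Proposition~3.3(i)]{CEW1} it is \emph{proved using} the coprimality of the degree, so it cannot serve as an input before the degree is controlled. (Moreover \cite[Theorem~1.3]{EW1} is a characteristic-zero statement and does not apply verbatim over $\overline{\Bbb F}_p$.) Second, the Skolem--Noether/cocycle step would not give the conclusion even if carried out: showing that the $H_p$-action on $D_p$ is inner does not imply that it factors through a group action --- inner actions of noncocommutative Hopf algebras on central simple algebras are plentiful --- so ``trivializing a cocycle'' is not a route to cocommutativity. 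Your final descent via bounding $|\Gamma_p|$ uniformly in $p$ is likewise unnecessary once one phrases the conclusion as cocommutativity of $H$, which is how the paper (following \cite[Theorem~4.1]{CEW1}) closes the argument.
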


\begin{proof} We may assume without loss of generality that the action of $H$ on $B$ is inner faithful 
(otherwise we can pass to an action of a quotient Hopf algebra). 

Take $p \gg 0$.  Then by \cite[Proposition~2.4]{CEW1} (which applies with the same proof in 
our more general situation), $H_p$ acts inner faithfully on $B_p$. 
Moreover, as in \cite[Lemma~2.5]{CEW1},  $H_p$ is a semisimple
cosemisimple Hopf algebra over $\overline{{\Bbb F}}_p$.

Since $B$ is  an algebra with PI reductions, by Lemma~\ref{lem:EGA}, the algebra 
$B_p$ is a PI domain. Let $D_p$ be the division algebra of quotients of $B_p$.  
Then by \cite[Corollary~3.2(ii)]{Et2},  $D_p$ is a
central division algebra of degree $p^n$ for some $n\ge 0$ (which may depend on $p$). 
Moreover, $H_p$ acts inner faithfully on $D_p$ by \cite[Theorem~2.2]{SV}.

 Since $\deg D_p=p^n$ is coprime to $(\dim H)!$,  \cite[Proposition~3.3(ii)]{CEW1} implies that  $H_p$ is cocommutative. 
Thus, $H$ is cocommutative  (as in the proof of \cite[Theorem~4.1]{CEW1}), and therefore is a group algebra. 
\end{proof} 

\subsection{Some examples of filtered quantizations}\label{examp} 

As a consequence, Theorem~\ref{main1} applies to semisimple Hopf actions on many classes of filtered quantizations. Namely, we will consider the following examples.

\subsubsection{Twisted differential operators.} Let $X$ be a smooth affine irreducible algebraic variety over $k$, and $\omega$ a closed 2-form on $X$. Then we define the {\it algebra of twisted 
differential operators $D_\omega(X)$} to be the algebra generated by $O(X)$ and 
elements $L_v$ attached $k$-linearly to vector fields $v\in {\rm Der}O(X)$ on $X$, 
with defining relations
$$
L_{fv}=fL_v,\quad [L_v,f]=v(f),\quad [L_v,L_w]=L_{[v,w]}+\omega(v,w)
$$
 for $f\in O(X),\ v,w\in {\rm Der}O(X)$. 
Then, $D_\omega(X)$ carries a filtration defined by $\deg O(X)=0$ and $\deg L_v=1$ for $v\in 
{\rm Der}O(X)$, and ${\rm gr}(D_\omega(X))=O(T^*X)$, the algebra of functions on the symplectic variety $T^*X$. 

The filtered algebra $D_\omega(X)$ depends only on the cohomology class $[\omega]$ of $\omega$, 
and if $[\omega] = 0$,  then $D_\omega(X)=D(X)$, the algebra of usual differential operators on 
$X$. For more on twisted differential operators, see e.g. \cite[Section~2]{BB}.  

\subsubsection{Quantum Hamiltonian reductions.} 
Let $G$ be a reductive algebraic group over $k$
with Lie algebra $\g$, and let $(V, (\cdot, \cdot))$  be a faithful finite dimensional symplectic representation of $G$. Let ${\bold A}(V)$
be the Weyl algebra of $V$, generated by $v\in V$ with relations $[v,w]=(v,w)$ for 
$v,w\in V$. We have a natural action of $G$ on ${\bold A}(V)$ which preserves its filtration. In this case, we have a natural $G$-equivariant Lie algebra map $\widehat\mu: \g\to {\bold A}(V)$ 
called the {\it quantum moment map}, which quantizes the classical moment map 
$\mu: V\to \g^*$ where $\mu(v)(a)=\frac{1}{2}(v,av)$ for $v\in V$, $a\in \g$.
Now, given a character $\chi: \g\to k$,  we can define the algebra $$B(\chi):=[{\bold A}(V)/{\bold A}(V)(\widehat{\mu}(a)-\chi(a), ~a\in \g)]^G,$$ called the {\it quantum Hamiltonian reduction of ${\bold A}(V)$ by $G$ using $\chi$}. It inherits a filtration from the Weyl algebra.  See \cite[Chapter~4]{Et1} for further details. 

Assume that  the moment map 
$\mu$ is flat, and that the scheme $\mu^{-1}(0)$ is reduced and irreducible (i.e., 
$\mu^{-1}(0)$ is a reduced irreducible complete intersection). 
In this case, $X:=\mu^{-1}(0)/G$ is an irreducible generically symplectic Poisson 
 variety, and $B(\chi)$ is a filtered quantization of $O(X)$. 
 See \cite[Section~2.3]{Lo2} and \cite[part~2(i)]{Nak}, and references therein for more details. 
 
 An interesting special case of this is when $G=(\prod_{i=1}^r GL(V_i))/k^\times$, and 
$V=\oplus_{i,j} (V_i\otimes V_j^*)^{\oplus m_{ij}}$, where 
$V_i$ are finite dimensional vector spaces and $m_{ij}=m_{ji}$ are 
positive integers where $m_{ii}$ is even; i.e., $V$ is the space of representations of a doubled quiver, and $G$ is the group of linear transformations for this quiver. 
In this case, $B(\chi)$ is the {\it quantized quiver variety}, see e.g. \cite[Section~3.4]{BPW}.
The conditions under which $\mu$ is flat and $\mu^{-1}(0)$ is reduced and irreducible 
are given in \cite[Theorems~1.1 and~1.2]{CB}. 
 
\subsubsection{Finite W-algebras.} Let $\g$ be a simple Lie algebra over $k$, and $e\in \g$ be a nilpotent element. To this data one can attach a Lie subalgebra ${\mathfrak{m}}\subset \g$
with a character $\chi$, and a {\it finite W-algebra} is 
$$
U(\g,e):=
(U(\g)/U(\g)(a-\chi(a), ~a\in {\mathfrak{m}}))^{\textrm{ad } \mathfrak{m}},
$$
a quantum Hamiltonian reduction of $U(\g)$. The algebra $U(\g,e)$ has 
a filtration induced by the filtration in $U(\g)$, and its associated graded algebra is a polynomial algebra (of functions on the corresponding Slodowy slice).  We refer 
the reader to \cite[Subsections~2.3 and~2.4]{Lo1} and the references therein for details.

Also, the center $U(\g)^\g$ of $U(\g)$ embeds into $U(\g,e)$, 
so for any central character $\theta: U(\g)^\g\to k$ 
one can consider the central reduction $U_\theta(\g,e):=U(\g,e)/(a-\theta(a),~a\in U(\g)^\g)$. 
Then ${\rm gr}(U_\theta(\g,e))=O(X)$, where $X$ is the nilpotent Slodowy slice, a 
generically symplectic Poisson variety. 

\subsubsection{Symplectic reflection algebras.}  Let $G$ be a finite group and $V$ a faithful finite dimensional symplectic representation of $G$, and assume that $V$ is not a direct sum of two nonzero symplectic representations. The {\it symplectic reflection algebra} 
$H_{t,c}(G,V)$ is the most general filtered deformation of $kG\ltimes SV$, where $[F_i,F_j]\subset F_{i+j-2}$; here $t\in k$ and $c$ is a conjugation invariant function on the set of symplectic reflections in $G$ (see \cite[Chapter~8]{Et1}). 

Let $\bold e=|G|^{-1}\sum_{g\in G}g$
be the symmetrizing idempotent for $G$. Then, the algebra $\bold e H_{t,c}(G,V)\bold e$ is called the {\it spherical symplectic reflection algebra}. For 
$t=1$, it is a filtered quantization of 
$O(X)$ where $X=V/G$, a generically symplectic Poisson variety. 

\begin{remark}
There are many other interesting examples of filtered quantizations, and 
our results given below can be extended to such examples. 
Since this extension is rather routine, we leave it outside the scope of this paper.  
\end{remark} 

\subsection{Results on semisimple Hopf actions on specific 
filtered quantizations} 
Here are some concrete applications of Theorem~\ref{main1}. 

\begin{corollary}\label{modp-examples1}
Let $B$ be one of the following filtered $k$-algebras:
\begin{enumerate}
\item[(i)] Any filtered quantization $B$ generated in filtered degree one; 
in particular, the enveloping algebra $U({\mathfrak{g}})$ of a 
finite dimensional Lie algebra $\mathfrak{g}$, or 
the algebra $D_\omega(X)$ of twisted differential operators on a smooth 
affine irreducible variety $X$; 
\item[(ii)] a finite W-algebra or its quotient by a central character; 
\item[(iii)] a quantum Hamiltonian reduction of a Weyl algebra 
by a reductive group action; in particular, the coordinate ring of a quantized quiver variety;
\item[(iv)] a spherical symplectic reflection algebra; or
\item[(v)] the tensor product of any of the algebras above with any commutative finitely generated 
domain over $k$.
\end{enumerate}
Then, any semisimple Hopf action on $B$ factors through a group action.
\end{corollary}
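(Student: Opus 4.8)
The plan is to deduce this corollary directly from Theorem~\ref{main1}: that result already reduces the claim (for semisimple $H$) to a purely ring-theoretic property, so all that remains is to verify that each algebra $B$ in (i)--(v) is an algebra with PI reductions. By Lemma~\ref{exis}(ii) it suffices to exhibit a single $R$-order $B_R$ whose reduction $B_p$ is PI for all large $p$, so I will produce such an order case by case. Throughout I will lean on the standard closure properties of the class of PI rings: a subalgebra, a quotient, and a corner algebra $\mathbf{e}A\mathbf{e}$ of a PI ring are PI; the tensor product over a field of a PI ring with a commutative ring is PI; and, most usefully, any algebra that is module-finite over a central Noetherian subring is PI. The uniform strategy is thus to realize $B_p$ either as module-finite over its center, or as a subquotient, corner, or tensor factor of an algebra already known to have PI reductions.

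For case (i), when $B$ is generated in filtered degree one the generators $b_i$ satisfy $[b_i,b_j]\in F_1B$ because ${\rm gr}(B)$ is commutative. Reducing a degree-one order modulo a large prime $p$, one checks that suitably corrected $p$-th powers of the generators are central in $B_p$; this is exactly the $p$-center phenomenon, modeled on $U(\mathfrak{g})$ (where $x^p-x^{[p]}$ is central for the restricted structure on $\mathfrak{g}_p$) and on $D_\omega(X)$ (where $p$-th powers of functions and of the operators $L_v$ are Frobenius-central). Since ${\rm gr}(B_p)$ is a finitely generated commutative domain over the perfect field $\overline{\Bbb F}_p$, it is module-finite over its subalgebra of $p$-th powers; lifting this finiteness through the filtration shows that $B_p$ is module-finite over the central subalgebra generated by the corrected $p$-th powers, hence PI. This covers $U(\mathfrak{g})$ and $D_\omega(X)$ in particular.

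Cases (ii)--(iv) all reduce to the Weyl algebra, whose reduction $\mathbf{A}(V)_p$ is PI, being module-finite over the central subalgebra of $p$-th powers of its generators (cf.\ \cite[Corollary~3.2(ii)]{Et2}). For (iii), write $B(\chi)=[\mathbf{A}(V)/I]^G$ with $I=\mathbf{A}(V)(\widehat\mu(a)-\chi(a))$; for $p\gg0$ the formation of the order's reduction commutes with passage to this quotient and to $G$-invariants --- this is where flatness of $\mu$ and reducedness/irreducibility of $\mu^{-1}(0)$ enter --- so $B(\chi)_p$ is a subalgebra of a quotient of $\mathbf{A}(V)_p$, hence PI. Finite W-algebras (ii) are the special case of (iii) with $U(\mathfrak{g})$ in place of $\mathbf{A}(V)$, and a central reduction is a further quotient, so both are PI. For (iv), ${\rm gr}(H_{t,c}(G,V))=kG\ltimes SV$ is module-finite over $(SV)^G$, and this persists after reduction, so $H_{t,c}(G,V)_p$ is PI; the spherical subalgebra $\mathbf{e}H_{t,c}(G,V)\mathbf{e}$ is then a corner algebra of a PI ring, hence PI. Finally, (v) follows since a commutative finitely generated domain $C$ reduces to a commutative (hence PI) algebra $C_p$, and $(B\otimes_k C)_p=B_p\otimes_{\overline{\Bbb F}_p}C_p$ is the tensor product of a PI ring with a commutative ring, hence PI.

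The main obstacle is not any individual PI verification but the uniformity in $p$: one must guarantee, for all sufficiently large $p$ at once, that reduction modulo $p$ commutes with the defining operations (quotient by the moment-map ideal, $G$-invariants, passage to the spherical corner) and that the relevant central subalgebra stays module-finite after specialization. This is precisely where the geometric hypotheses recorded in Subsection~\ref{examp} and the generic-freeness arguments underlying Lemma~\ref{exis} do the real work; once they are in force for $p\gg0$, the closure properties above settle each case, and Theorem~\ref{main1} then yields the stated conclusion.
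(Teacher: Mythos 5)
Your overall skeleton is the paper's: invoke Theorem~\ref{main1}, then verify case by case that $B$ has PI reductions, and your treatments of (ii), (iii) and (v) agree in substance with the paper's (reduce to case (i) via subquotients, then use closure of PI under subrings, quotients and tensoring with commutative rings). However, your verification of case (i) has a genuine gap, and it sits exactly at the hard point. The claim that ``suitably corrected $p$-th powers of the generators are central in $B_p$'' is false in the stated generality, even for $B=U(\mathfrak g)$: the reduction $\mathfrak g_p$ of a characteristic-zero Lie algebra need not be restricted. Concretely, take $\mathfrak g$ spanned by $x,y_1,y_2$ with $[x,y_1]=y_1$, $[x,y_2]=\lambda y_2$, $[y_1,y_2]=0$; for infinitely many $p$ the image of $\lambda$ lies in $\Bbb F_{p^2}\setminus \Bbb F_p$, the derivation ${\rm ad}(x)^p$ is then not inner, and one checks that $U(\mathfrak g_p)$ contains no central element of the form $x^p-(\text{lower order})$ --- the smallest central $p$-polynomial in $x$ has leading term $x^{p^2}$. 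What is true (Jacobson) is that for a \emph{finite dimensional} Lie algebra $L$ over a \emph{field} of characteristic $p$, each $x\in L$ admits a central $p$-polynomial $\sum_m c_m x^{p^m}$, which still suffices for module-finiteness over the center; but $F_1B_p$ is only a module over the infinite-dimensional commutative algebra $F_0B_p$, so this does not apply directly, and your ``lifting this finiteness through the filtration'' has no justification. The paper's proof resolves both problems at once: the operators $[b_i,\,\cdot\,]$ are derivations of $F_0B_p$, so $(F_0B_p)^p$ is central; localizing at ${\rm Frac}((F_0B_p)^p)$ makes $F_1$ a finite dimensional Lie algebra $L$ over a field $K$, exhibits the localization as a quotient of $U(L)$, and then Jacobson's theorem applies; finally PI passes to the subring $B_p$ of the localization.

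Case (iv) has a second gap: your argument there is a non sequitur. The subalgebra $(SV)^G$ is not central in $kG\ltimes SV$, and, more importantly, PI-ness (or module-finiteness over a subalgebra) of ${\rm gr}(A)$ never by itself implies PI-ness of the filtered algebra $A$ --- the Weyl algebra in characteristic zero, whose associated graded is a polynomial ring, is the standard counterexample, and it is precisely why the whole paper is nontrivial. The actual content of case (iv) is that the symplectic reflection algebra $H_{1,c}(G,V)$ in characteristic $p$ is module-finite over its center (existence of a ``$p$-center''); this is a genuinely nontrivial theorem, and the paper handles it by citing \cite[Theorem~9.1.1]{BFG}. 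Note also that $H_{1,c}(G,V)$ is not itself a filtered quantization in the paper's sense (its degree-zero part $kG$ is noncommutative), so case (i) cannot be applied to it; your corner-algebra reduction $\mathbf{e}A\mathbf{e}$ of a PI ring is fine, but the premise that $A=H_{1,c}(G,V)_p$ is PI requires the Bezrukavnikov--Finkelberg--Ginzburg result, not the associated-graded observation you make.
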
 

Note that in some of these cases, a stronger statement is true: any finite dimensional 
(not necessarily semisimple) Hopf action on $B$ factors through a group action, see Corollary \ref{gensymp} 
below. However, we still prefer to prove the weaker version here, since the proof is simpler 
(e.g., it does not require reduction modulo prime powers).  

\begin{proof}
By Theorem \ref{main1}, our job is to show that  $B$ is  an algebra with PI reductions.  In other words, we need to show that the division algebra $D_p$ of fractions of $B_p$ is central (i.e., there is a ``$p$-center") for $p \gg 0$. We do so below in each case.  

\smallskip

(i) We will show that if a filtered quantization $A$ of a commutative finitely generated domain 
$A_0$ over a field $F$ of characteristic $p>0$ is generated in degree one, then it is 
module-finite over its center after localization; this implies the required statement. 

Let $A_0[i]$ be the degree $i$ part of $A_0$. Then $A_0[0]=A[0]$ is a finitely generated commutative domain. Let $\bar a_1,\dots,\bar a_n$ be generators of $A_0[1]$ 
as an $A_0[0]$-module. Let $a_i$ be lifts of $\bar a_i$ to $A$. 
Then, $a_i$ and $A[0]$ generate $A$ as an algebra. Also the operators $[a_i,?]$ 
are derivations of $A[0]$, hence vanish on $A[0]^p$. Thus, 
$A[0]^p$ is central in $A$. Let $K$ be the field of quotients of 
$A[0]^p$, and $A':=A\otimes_{A[0]^p}K$. The $K$-algebra $A'$ is generated in filtration degree $1$, and $L:=F_1A'$ is a finite dimensional vector space over $K$ 
(as it is spanned by $1,a_1,\dots,a_n$ over $A[0]\otimes_{A[0]^p}K$, and $A[0]$ is module-finite over $A[0]^p$ as $A[0]$ is a finitely generated algebra). Also, $L$ is closed under commutator. Thus, $L$ is a finite dimensional Lie algebra over $K$, and 
$A'$ is a quotient of the enveloping algebra $U(L)$. But the enveloping algebra 
of a finite dimensional Lie algebra in characteristic $p$ is module-finite over its center (i.e., there is a $p$-center; 
see \cite{Ja1}, \cite[Chapter 6, Lemma 5]{Ja2}). 
This implies that $A'$ is module-finite over its center, as desired. 

\smallskip

(ii) Since a $W$-algebra is a quantum Hamiltonian reduction of the enveloping algebra 
$U(\mathfrak{g})$  of a semisimple Lie algebra ${\mathfrak{g}}$, (see \cite{Lo1}), the  statement  follows from (i).  

\smallskip

(iii) This also follows from (i) and the definition of the quantum Hamiltonian reduction.

\smallskip

(iv) This holds by \cite[Theorem~9.1.1 (in the appendix)]{BFG}.

\smallskip

(v) This follows easily from the previous cases. 
\end{proof}

\section{Finite dimensional Hopf actions on filtered quantizations} 

\subsection{Hopf-Galois actions} 

Theorem \ref{main1} does not hold for nonsemisimple Hopf actions, as there are many inner faithful actions of nonsemisimple finite dimensional Hopf algebras on commutative domains, see \cite{EW2}. However, Theorem~\ref{main1} is valid in the Hopf-Galois case.

\begin{theorem}\label{main1a} 
Let $B$ be a filtered quantization of a commutative finitely generated domain with PI reductions,  and let $H$ be a finite dimensional Hopf algebra over $k$ which acts on $B$. Assume that this action gives rise to an $H^*$-Hopf-Galois extension $B^H\subset B$. Then, 
$H$ is a group algebra. 
\end{theorem}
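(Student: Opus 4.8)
The goal is to remove the semisimplicity hypothesis from Theorem~\ref{main1} under the assumption that the action yields an $H^*$-Hopf-Galois extension $B^H\subset B$, concluding that $H$ is a group algebra. The strategy is to run the reduction-modulo-$p$ machinery of Lemma~\ref{lem:filtered} and Lemma~\ref{lem:EGA} exactly as in the proof of Theorem~\ref{main1}, and to replace the single input that genuinely used semisimplicity---the passage to the cosemisimple reduction $H_p$---by the parallel Hopf-Galois argument of \cite[Theorem~4.2]{CEW1}. First I would reduce to the case where the $H$-action is inner faithful, as before, and fix an $R$-order $B_R\subset B$ carrying an action of a Hopf order $H_R\subset H$. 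For $p\gg 0$, set $B_p=B_R\otimes_R\overline{\Bbb F}_p$ and $H_p=H_R\otimes_R\overline{\Bbb F}_p$; by Lemma~\ref{lem:EGA} the algebra $B_p$ is a domain, and since $B$ has PI reductions it is a PI domain, with division algebra of fractions $D_p$ that is central of degree $p^n$ by \cite[Corollary~3.2(ii)]{Et2}.

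The key structural point is that the Hopf-Galois property descends to the reduction. I would verify that the reduction mod $p$ of an $H^*$-Hopf-Galois extension $B^H\subset B$ gives an $H_p^*$-Hopf-Galois extension $B_p^{H_p}\subset B_p$; concretely, the Galois map $\beta\colon B\otimes_{B^H}B\to B\otimes H$, $\beta(x\otimes y)=\sum xy_{(0)}\otimes y_{(1)}$, is defined over $R$ on the chosen order and its being an isomorphism is an open condition that survives reduction for almost all $p$. Granting this, \cite[Theorem~2.2]{SV} promotes the inner faithful action of $H_p$ on $B_p$ to an inner faithful action on $D_p$, so $H_p$ acts inner faithfully and Hopf-Galois on a central division algebra of degree coprime to $\dim H$.

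At this point I would invoke the Hopf-Galois version of the coprimality argument. In \cite[Theorem~4.2]{CEW1} it is shown, via \cite[Proposition~3.3]{CEW1} in the Hopf-Galois setting, that an inner faithful Hopf-Galois action of a finite dimensional Hopf algebra $H_p$ on a central division algebra whose degree is coprime to $\dim H$ forces $H_p$ to be cocommutative; the Hopf-Galois hypothesis is precisely what lets one drop semisimplicity here, because it rigidifies the action enough that the degree-coprimality obstruction still applies. Running this over all large $p$ shows $H_p$ is cocommutative for $p\gg 0$, and lifting back to characteristic zero (as in the last step of the proof of Theorem~\ref{main1}) yields that $H$ is cocommutative, hence a group algebra.

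The main obstacle I anticipate is the second paragraph: checking that the Hopf-Galois condition is preserved under reduction modulo $p$ for almost all primes. This requires showing that the Galois map is defined over the order $R$ and that the locus where it is an isomorphism is nonempty and open, so that it persists after specialization at $\psi\colon R\to\overline{\Bbb F}_p$; the subtlety is that bijectivity of a map of $R$-modules need not be preserved under arbitrary base change, so one must arrange that the relevant modules are projective (finitely generated) over $R$ after possibly enlarging $R$, using Grothendieck's Generic Freeness Lemma as in Lemma~\ref{exis}(i). Once this descent is secured, the remaining steps are direct transcriptions of the arguments in \cite{CEW1,Et2,SV} and present no new difficulty.
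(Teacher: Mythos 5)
Your proposal is correct and takes essentially the same approach as the paper: reduce modulo $p$ as in Theorem~\ref{main1}, use the PI-reduction hypothesis and \cite[Corollary~3.2(ii)]{Et2} to get a central division algebra $D_p$ of degree $p^n$, descend the Hopf-Galois property, and conclude that $H_p$ (hence $H$) is cocommutative via the Hopf-Galois coprimality argument of \cite[Theorem~4.2]{CEW1}. The only difference is explicitness at the final step, which you cite as a black box: the paper spells out the mechanism, namely that $Z=Z(D_p)$ is $H_p$-stable by \cite[Proposition~3.3(i)]{CEW1}, that the Hopf-Galois property passes to the $H_p$-action on $Z$ so that $Z\otimes_{Z^{H_p}}Z\to Z\otimes H_p^*$ is an algebra isomorphism, and that commutativity of the source then forces $H_p^*$ to be commutative, i.e., $H_p$ cocommutative.
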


\begin{proof}
The result follows from the arguments in the proofs of Theorem~\ref{main1}
and \cite[Theorem~4.2]{CEW1}.  Namely, recall Notation \ref{notation-B}. We show similarly to the proof of Theorem \ref{main1} that $Z$ is $H_p$-stable, 
and then proceed as in the proof of \cite[Theorem~4.2]{CEW1}. 
Specifically, by \cite[Corollary 3.2(ii))]{Et2}, $D_p$ has degree $p^n$ over its center $Z=Z(D_p)$ for some $n$, so 
by \cite[Proposition 3.3(i)]{CEW1}, $Z$ is $H_p$-invariant. Now, 
since the action of $H$ on $B$ gives rise to a Hopf-Galois extension, so does the action of $H_p$ on $Z$, i.e., the algebra map 
$Z\otimes_{Z^{H_p}}Z\to Z\otimes H_p^*$ is an isomorphism. 
Thus, $H_p^*$ is commutative and $H_p$ is cocommutative, so $H$ is cocommutative (\cite[Lemma 2.3(ii)]{CEW1}), i.e., a group algebra by the Cartier-Gabriel-Kostant theorem 
\cite[Corollary~5.6.4(3) and Theorem~5.6.5]{Mo}.   
\end{proof}

\subsection{Preparatory results on nondegenerate quantizations} 
Another generalization of Theorem \ref{main1} concerns nondegenerate quantizations, defined in Definition \ref{nondege}. To obtain it, we will first 
need to generalize \cite[Theorem~3.2]{CEW2}.
Let $\mathcal{H}$ be a finite dimensional Hopf algebra over an algebraically closed field $F$ of characteristic $p>0$, and let
$\mathcal{Z}$ be a finitely generated field extension of $F$.
Let $\mathcal{Z}(m)$, for $m\ge 1$, be a collection of subfields of $\mathcal{Z}$ such that $\mathcal{Z}(m)\supset \mathcal{Z}(m+1)$ for all $m\ge 1$. 

\begin{theorem}\label{th3.2gen} Suppose that $\bigcap_{m\ge 1}\mathcal{Z}(m)=F$, and that $[\mathcal{Z}:\mathcal{Z}(m)]$ is a power of $p$ for all $m\ge 1$. Assume that $\mathcal{H}$ acts $F$-linearly and inner faithfully on $\mathcal{Z}$.
If $p>\dim \mathcal{H}$ and $\mathcal{H}$ preserves $\mathcal{Z}(m)$ for all $m$, then $\mathcal{H}$ is a group algebra.  
\end{theorem}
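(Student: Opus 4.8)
The plan is to prove that the hypotheses force $\mathcal{H}$ to be cocommutative, and then to deduce that it is a group algebra from the bound $p>\dim\mathcal{H}$. For the last implication I would invoke the structure theory of cocommutative Hopf algebras over an algebraically closed field: since $F$ is algebraically closed, $\mathcal{H}$ is pointed, and it decomposes as a smash product $\mathcal{H}^{\circ}\# F[G]$, where $G=G(\mathcal{H})$ is its group of grouplikes and $\mathcal{H}^{\circ}$ is the connected (``infinitesimal'') component of the identity (see \cite{Mo}). A connected finite dimensional cocommutative Hopf algebra in characteristic $p$ has dimension a power of $p$; as $\dim\mathcal{H}^{\circ}$ divides $\dim\mathcal{H}<p$, it must equal $1$, so $\mathcal{H}^{\circ}=F$ and $\mathcal{H}=F[G]$ is a group algebra. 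Thus everything reduces to establishing cocommutativity.

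As a first step toward cocommutativity, I would analyze the subfield of invariants $\mathcal{Z}^{\mathcal{H}}$. Since $\mathcal{H}$ is finite dimensional and acts on the field $\mathcal{Z}$, the extension $\mathcal{Z}/\mathcal{Z}^{\mathcal{H}}$ is finite with $[\mathcal{Z}:\mathcal{Z}^{\mathcal{H}}]\le\dim\mathcal{H}$. As $\dim\mathcal{H}<p$, this degree is coprime to $p$, so $\mathcal{Z}/\mathcal{Z}^{\mathcal{H}}$ is a \emph{separable} extension, and hence $\mathrm{Der}_{\mathcal{Z}^{\mathcal{H}}}(\mathcal{Z})=0$. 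Now any primitive element $x\in\mathcal{H}$ satisfies $\epsilon(x)=0$, so it kills $\mathcal{Z}^{\mathcal{H}}$ and acts as a $\mathcal{Z}^{\mathcal{H}}$-linear derivation of $\mathcal{Z}$; therefore $x$ acts as $0$. In other words, the primitives of $\mathcal{H}$ lie in the kernel of the action. The remaining, and harder, task is to control the skew-primitive (and higher coradical) parts of $\mathcal{H}$, which act as twisted derivations and need not vanish by separability alone.

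This is where the chain $\mathcal{Z}\supseteq\mathcal{Z}(1)\supseteq\mathcal{Z}(2)\supseteq\cdots$, with $\bigcap_{m\ge 1}\mathcal{Z}(m)=F$ and each $[\mathcal{Z}:\mathcal{Z}(m)]$ a power of $p$, enters, and I expect this to be the main obstacle. Following the template of \cite[Theorem~3.2]{CEW2}, the idea is to run an inductive Frobenius-untwisting argument along the tower: because each $\mathcal{Z}(m)$ is $\mathcal{H}$-stable and the successive indices are $p$-powers, one can compare the action on $\mathcal{Z}$ with its restriction to $\mathcal{Z}(m)$ after untwisting by Frobenius, and use that the chain collapses to $F$ to trivialize the grouplike twists that obstruct the vanishing of the skew-derivations. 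Inner faithfulness guarantees that this process discards no essential information, and in the limit it forces the coproduct to be symmetric, i.e.\ $\mathcal{H}$ cocommutative. The delicate points are verifying that inner faithfulness is preserved, in a suitable form, under restriction along the tower, and organizing the induction on $m$ so that the separability input of the previous paragraph applies uniformly at each stage; completing this is the technical heart of the proof.
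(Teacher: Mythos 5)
Your preliminary reductions are essentially sound, but they are the easy part of the theorem, and the actual content is missing. The reduction ``cocommutative $+$ $p>\dim\mathcal{H}$ $\Rightarrow$ group algebra'' via the decomposition $\mathcal{H}\cong\mathcal{H}^{\circ}\#F[G]$ and the fact that a finite dimensional connected cocommutative Hopf algebra in characteristic $p$ has $p$-power dimension is correct, and so is the observation that $[\mathcal{Z}:\mathcal{Z}^{\mathcal{H}}]\le\dim\mathcal{H}<p$ forces $\mathcal{Z}/\mathcal{Z}^{\mathcal{H}}$ to be separable, hence that every primitive element acts by zero (and then, by inner faithfulness, $\mathcal{H}$ has no nonzero primitives). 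But, as you yourself admit, none of this touches the crux: a $(g,1)$-skew-primitive acts as a $g$-twisted derivation, and separability alone does not kill it. Your final paragraph, which is where the theorem would actually be proved, contains no argument: there is no formulated induction, no mechanism by which the hypotheses $\bigcap_{m\ge 1}\mathcal{Z}(m)=F$ and $[\mathcal{Z}:\mathcal{Z}(m)]=p^{a_m}$ are brought to bear on the skew-primitive and higher coradical parts, and no verification of how inner faithfulness passes along the tower. Announcing ``an inductive Frobenius-untwisting argument'' and labeling it ``the technical heart of the proof'' is a plan, not a proof; the gap sits exactly at the theorem's point of difficulty.

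For comparison, the paper's proof is one sentence but complete: it observes that the proof of \cite[Theorem~3.2]{CEW2} --- which treats the special subfields $\mathcal{Z}^{p^m}$ --- uses only two properties of those subfields, namely that their intersection is $F$ and that $[\mathcal{Z}:\mathcal{Z}^{p^m}]$ is a power of $p$; since both are hypotheses here, that proof applies verbatim with $\mathcal{Z}^{p^m}$ replaced by $\mathcal{Z}(m)$. That verbatim-transfer observation is precisely the content you deferred. To repair your write-up you would either have to make the same observation precise (checking line by line that the argument of \cite[Theorem~3.2]{CEW2} never uses that $\mathcal{Z}(m)$ consists of $p^m$-th powers, only the two abstract properties), or else reproduce that argument in full; as it stands, the cocommutativity step --- and hence the theorem --- is asserted rather than proved.
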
 

\begin{proof} The proof is the same as that of \cite[Theorem~3.2]{CEW2}. Indeed, the only properties 
of the fields $Z^{p^m}$ used in that  proof are that their intersection is $F$ and 
that the degree of $Z$ over $Z^{p^m}$ is a power of $p$.  
\end{proof} 

We will also need the lemma below from commutative algebra. 
Introduce  the following notation. Let 
$W_N=W_N(F):=W(F)/(p^N)$ be the $N$-th truncated Witt ring of $F$ ($W_N$ is an algebra over 
$\Bbb Z/p^N\Bbb Z$; cf. \cite[Subsection 2.1]{CEW2}). Let $Y$ be an irreducible smooth 
affine algebraic variety over $F$ with structure algebra $A:=O(Y)$, 
and $\widetilde{Y}$ be a flat deformation of $Y$ over $W_N$. 
Let $1\le m\le N$, and let $A_m:=O(\widetilde{Y})/(p^m)$ (a free $\Bbb Z/p^m\Bbb Z$-module); thus $A_1=A$ and $A_{m-1}=A_m/(p^{m-1})$ for $m\ge 2$. Let 
$$d_m: A_m\to \Omega_{A_m/W_m}$$ be the 
differential. 

\begin{lemma} \label{commalg} For $1\le m\le N$, the image of 
${\rm ker}(d_m)$ in $A$ is $A^{p^m}$.
\end{lemma}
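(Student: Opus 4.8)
The plan is to prove the two inclusions separately: $A^{p^m}\subseteq \operatorname{image}(\ker d_m)$ by a one-line construction, and the reverse inclusion by reducing to an explicit local computation over a power series ring.

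\emph{The easy inclusion.} Given $g\in A$, lift it to some $\widetilde g\in A_m$ (possible since $A_m\to A$ is surjective) and consider $\widetilde g^{\,p^m}\in A_m$. Since $p^m=0$ in $A_m$, the Leibniz rule gives $d_m(\widetilde g^{\,p^m})=p^m\,\widetilde g^{\,p^m-1}\,d\widetilde g=0$, so $\widetilde g^{\,p^m}\in\ker d_m$; and its image in $A=A_m/(p)$ is $g^{p^m}$. As $g$ ranges over $A$ this produces all of $A^{p^m}$, giving $A^{p^m}\subseteq\operatorname{image}(\ker d_m)$.

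\emph{The reverse inclusion.} Here I would first upgrade flatness to smoothness: since $\widetilde Y$ is flat and of finite type over $W_N$ with smooth special fiber $Y$, the fibral criterion for smoothness shows $\widetilde Y$ is smooth over $W_N$, so $A_m$ is smooth over $W_m$ and $\Omega_{A_m/W_m}$ is a finite projective $A_m$-module that is flat over $W_m$. Next I would reduce to a local question: because $A$ is a regular (hence normal) domain and $p^m$-th roots are unique in the perfect closure, an element $\overline a\in A$ lies in $A^{p^m}$ if and only if $\overline a\in(\widehat A_{\mathfrak m})^{p^m}$ for every maximal ideal $\mathfrak m$. Indeed, the unique $p^m$-th root $u$ of $\overline a$ in ${\rm Frac}(A)^{1/p^\infty}$ then lies in $\widehat A_{\mathfrak m}$ for all $\mathfrak m$; using separability of ${\rm Frac}(\widehat A_{\mathfrak m})/{\rm Frac}(A)$ one gets $u\in{\rm Frac}(A)$, and then $u\in{\rm Frac}(A)\cap\widehat A_{\mathfrak m}=A_{\mathfrak m}$ for all $\mathfrak m$, whence $u\in\bigcap_{\mathfrak m}A_{\mathfrak m}=A$. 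So it suffices to prove $\operatorname{image}(\ker d_m)\subseteq A^{p^m}$ after completing at a closed point.

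After completion, formal smoothness identifies $\widehat{(A_m)}_{\mathfrak M}\cong W_m[[x_1,\dots,x_n]]$ compatibly with $\widehat A_{\mathfrak m}\cong F[[x_1,\dots,x_n]]$, with the (completed) module of differentials free on $dx_1,\dots,dx_n$. Transporting $d_m a=0$ via the base-change isomorphism $\Omega_{A_m/W_m}\otimes_{A_m}\widehat{(A_m)}_{\mathfrak M}\cong\widehat\Omega_{\widehat{(A_m)}_{\mathfrak M}/W_m}$ gives $\partial_i a=0$ for all $i$, where $\partial_i$ is dual to $dx_i$. Writing $a=\sum_\alpha c_\alpha x^\alpha$ with $c_\alpha\in W_m$, these equations read $\alpha_i c_\alpha=0$ in $W_m$ for all $i,\alpha$. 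If some $\alpha_i$ is not divisible by $p^m$, then $v_p(\alpha_i)<m$, so $\alpha_i c_\alpha=0$ forces $c_\alpha\in p^{\,m-v_p(\alpha_i)}W_m\subseteq pW_m$, hence $\overline{c_\alpha}=0$. Therefore $\overline a=\sum_{p^m\mid\alpha}\overline{c_\alpha}\,x^\alpha\in F[[x_1^{p^m},\dots,x_n^{p^m}]]=(\widehat A_{\mathfrak m})^{p^m}$, which is exactly what the local criterion requires.

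The main obstacle is the descent in the third paragraph: one must ensure that passing to completions neither loses nor creates $p^m$-th powers. Losing them is controlled by $A=\bigcap_{\mathfrak m}A_{\mathfrak m}$ and faithful flatness ($A_{\mathfrak m}={\rm Frac}(A_{\mathfrak m})\cap\widehat A_{\mathfrak m}$). The delicate point is that no new purely inseparable relations appear, i.e. that ${\rm Frac}(\widehat A_{\mathfrak m})/{\rm Frac}(A)$ is separable, so that a $p^m$-th root living in the completion already lies in ${\rm Frac}(A)$. This reduces, via the finite separable extension ${\rm Frac}(A)/F(x_1,\dots,x_n)$, to the separability of ${\rm Frac}(F[[x]])/F(x_1,\dots,x_n)$, which one checks by the characteristic-$p$ computation that the only elements of ${\rm Frac}(F[[x]])$ whose $p$-th power is rational have vanishing partials and hence lie in $F(x)^p$. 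An alternative that sidesteps completions is a direct induction on $m$: reducing $a$ modulo $p^{m-1}$ lands in $\ker d_{m-1}$, and one upgrades the resulting membership $\overline a\in A^{p^{m-1}}$ to $\overline a\in A^{p^m}$ by tracking the $p$-divisibility of the correction term; the bookkeeping of cross terms is the price of staying global.
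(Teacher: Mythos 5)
Your overall strategy is genuinely different from the paper's, and most of it is sound: the easy inclusion via $d_m(\widetilde g^{\,p^m})=p^m\widetilde g^{\,p^m-1}d\widetilde g=0$, the smoothness of $\widetilde Y$ over $W_N$ by the fibral criterion, the identification of the completed local ring with $W_m[[x_1,\dots,x_n]]$, and the monomial computation showing that $d_m a=0$ forces $\bar a\in F[[x_1^{p^m},\dots,x_n^{p^m}]]=(\widehat A_{\mathfrak m})^{p^m}$ are all correct (that monomial computation is exactly the paper's key observation). The structural difference is this: the paper argues by induction on $m$, where the completion computation is used only to conclude that the intermediate element $b$ (with $\bar f=b^{p^{m-1}}$) satisfies $db=0$ in $\Omega_{A/F}$ --- a condition that descends from the completion by mere injectivity of $\Omega_{A/F}\to\Omega_{A/F}\otimes_A\widehat A_{\mathfrak m}$ --- and all the field theory is isolated in the base case $m=1$, proved by a minimal-polynomial argument over $E=F(y_1,\dots,y_n)$. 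You instead work at level $m$ directly and must then descend the statement ``\emph{is} a $p^m$-th power'' from $\widehat A_{\mathfrak m}$ to $A$. That descent is precisely the difficulty the paper's induction is engineered to avoid, and it is where your proof has a gap.

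Concretely: your descent needs genuine (MacLane) separability of ${\rm Frac}(\widehat A_{\mathfrak m})/{\rm Frac}(A)$, i.e.\ linear disjointness from ${\rm Frac}(A)^{1/p}$; that is what makes the tower step ``$M/K$ separable and $L/K$ finite separable $\Rightarrow$ $M/L$ separable'' valid, and what forces a purely inseparable element of $M$ over $L$ to lie in $L$. But the elementary computation you propose proves only the weaker statement that $F(x_1,\dots,x_n)$ is purely inseparably closed in ${\rm Frac}(F[[x_1,\dots,x_n]])$ (i.e.\ $K^{1/p}\cap M=K$), and this weak statement does \emph{not} propagate up the finite separable extension ${\rm Frac}(A)/F(x_1,\dots,x_n)$: from $K^{1/p}\cap M=K$ one cannot conclude $L^{1/p}\cap M=L$. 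Two ways to close the gap: (a) cite excellence of finitely generated $F$-algebras (geometric regularity of formal fibers gives the required separability of ${\rm Frac}(\widehat A_{\mathfrak m})$ over ${\rm Frac}(A)$); or (b) argue directly: if $u\in{\rm Frac}(\widehat A_{\mathfrak m})$ and $u^p=a\in{\rm Frac}(A)$, then $da=0$ in $\Omega_{{\rm Frac}(\widehat A_{\mathfrak m})/F}$, hence $da=0$ in $\Omega_{{\rm Frac}(A)/F}$ (the $dx_i$ stay independent, being detected by the partial derivatives), and now the minimal-polynomial argument --- which is exactly the paper's base case --- gives $a\in{\rm Frac}(A)^p$, so $u\in{\rm Frac}(A)$ by uniqueness of $p$-th roots; iterate to handle $p^m$. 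Since repair (b) imports the paper's base case anyway, the paper's inductive organization is arguably the more economical one. Minor simplification for your third paragraph: once $u\in{\rm Frac}(A)$, normality of $A$ plus integrality of $u$ (it satisfies $t^{p^m}-\bar a$) gives $u\in A$ immediately, so the faithful-flatness and $A=\bigcap_{\mathfrak m}A_{\mathfrak m}$ bookkeeping is not needed.
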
 

\begin{proof} It is clear that the image of 
${\rm ker}(d_m)$ contains $A^{p^m}$,  so 
it remains to establish the opposite inclusion.
We will do so by induction in $m$. 

The base of induction is the equality 
${\rm ker}(d|_A)=A^p$, which is 
the Cartier isomorphism in degree zero
(\cite[Section~7]{Kat}). Alternatively, here is a
direct proof. Since $A$ is integrally closed in its quotient field \linebreak $L:={\rm Frac}(A)$, we may replace $A$ with $L$. Note that 
$L$ can be represented as a finite 
separable extension of $F(y_1,\dots,y_n)$, where \linebreak $n=\dim Y$.  
Given $f\in L$ such 
that $df=0$, consider the minimal polynomial $P(t)=t^r+a_{r-1}t^{r-1}+\cdots+a_0$
of $f$ over $E:=F(y_1,\dots,y_n)$. 
Applying the differential to  the equation $P(f)=0$, we get 
$\sum_{j=0}^{r-1}f^jda_j=0$. Since $P$ is the minimal polynomial, this implies that $da_j=0$ for all $j$. Thus $a_j\in E^p$ (as the statement in question 
is easy for purely transcendental fields). 
Thus, $E^p(f)$ is a finite separable extension of $E^p$   
(as $P$ is a separable polynomial). But $E^p(f)$ is a purely inseparable extension of 
$E^p(f^p)$. Hence, $E^p(f)=E^p(f^p)$, that is, 
$f\in E^p(f^p)\subset L^p$, as desired. 

To perform the induction step, 
suppose $f\in {\rm ker}(d_m)$. 
Our job is to show that the image $\bar f$ 
of $f$ in $A$ is contained in $A^{p^m}$. 
By the induction assumption we know 
that $\bar f=b^{p^{m-1}}$ for some $b\in A$, 
so it remains to show that $b=c^p$ for some $c\in A$. 

For this, let us expand $f$ in a 
power series in some local coordinate system $y_1,\dots,y_n$ 
on $\widetilde{Y}$. It is easy to see by looking at monomials 
that if $g\in W_m[[y_1,\dots,y_n]]$ and $dg=0$ then
the reduction $\bar g$ of $g$ modulo $p$ lies in 
$F[[y_1^{p^m},\dots,y_n^{p^m}]]$.  In particular, the power series 
expansion of $\bar f$ in $y_i$ is in $F[[y_1^{p^m},\dots,y_n^{p^m}]]$.
This means that the power series expansion of $b$ is in 
$F[[y_1^p,\dots,y_n^p]]$. Thus, $db=0$. By the base of induction we conclude that 
$b=c^p$ for some $c\in A$, which completes the induction step. 
\end{proof}

Moreover, we will need the result below.

\begin{lemma} \label{lem:fieldfinite}
Let $B$ be an algebra with PI reductions, and let $D_p$ denote the full localization (i.e., the ring of fractions) 
of the reduction $B_p$ of $B$, for $p \gg 0$. Then,  the center $Z$ of $D_p$ is a finitely generated field extension of $\overline{\Bbb F}_p$.
\end{lemma}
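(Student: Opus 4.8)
The plan is to show that the center $Z$ of the division algebra $D_p$ is finitely generated as a field over $\overline{\Bbb F}_p$, by exploiting the PI structure and the finite generation of $B_R$. First I would recall the setup: since $B$ is an algebra with PI reductions, for $p \gg 0$ the reduction $B_p = B_R \otimes_R \overline{\Bbb F}_p$ is a PI domain (Lemma~\ref{lem:EGA}), and $D_p = \mathrm{Frac}(B_p)$ is a central division algebra of degree $p^n$ over $Z = Z(D_p)$ (as used in the proof of Theorem~\ref{main1}, via \cite[Corollary~3.2(ii)]{Et2}). The key structural fact I would use is that $B_p$ is generated as an $\overline{\Bbb F}_p$-algebra by the finitely many elements $\bar b_1, \dots, \bar b_n$ coming from the lifts of generators of $\mathrm{gr}(B_R)$, since $B_R$ is finitely generated over $R$ and reduction preserves a finite generating set.

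The core of the argument is that a finitely generated PI algebra has a center that is (essentially) finitely generated. Concretely, I would invoke the theorem of Artin--Tate / the commutative-algebra consequence of the PI condition: if $B_p$ is a finitely generated PI algebra over the field $\overline{\Bbb F}_p$, then its center $C_p := Z(B_p)$ is a finitely generated $\overline{\Bbb F}_p$-algebra, and $B_p$ is a finite (module-finite) extension of $C_p$ after passing to the localization. This is the PI analogue of the Artin--Tate lemma; see \cite[Theorem~13.9.16 or Corollary~13.10.4]{MR}, which states precisely that the center of a finitely generated affine PI algebra over a Noetherian central subring is itself a finitely generated algebra. Granting this, $C_p$ is a finitely generated commutative $\overline{\Bbb F}_p$-algebra that is a domain (being a subring of the domain $B_p$), so $\mathrm{Frac}(C_p)$ is a finitely generated field extension of $\overline{\Bbb F}_p$ of finite transcendence degree.

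It then remains to relate $Z = Z(D_p)$ to $\mathrm{Frac}(C_p)$. Since $D_p = B_p \otimes_{C_p} \mathrm{Frac}(C_p)$ is the full localization (as recorded in Notation~\ref{notation-B}), its center $Z$ is a finite-degree extension of $\mathrm{Frac}(C_p)$: indeed $D_p$ is a central simple algebra of dimension $(p^n)^2$ over $\mathrm{Frac}(C_p) \subset Z$, so $[Z : \mathrm{Frac}(C_p)]$ divides $p^{2n}$ and in particular is finite. A finite extension of a finitely generated field is again finitely generated, so $Z$ is a finitely generated field extension of $\overline{\Bbb F}_p$, as claimed.

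The step I expect to be the main obstacle is the precise invocation of the PI Artin--Tate lemma and verifying its hypotheses in our setting: one must be careful that $B_p$ really is \emph{affine} (finitely generated as an algebra) over $\overline{\Bbb F}_p$ and that the relevant central subring is Noetherian, so that the conclusion ``center is finitely generated'' applies verbatim. The finite generation of $B_p$ follows from that of $B_R$ over $R$ (Lemma~\ref{exis}(i)), but I would want to state this reduction cleanly. A subtler point is that $C_p = Z(B_p)$ and $Z = Z(D_p)$ need not coincide --- localization can enlarge the center --- so the final paragraph relating the two via the central-simple-algebra structure of $D_p$ is genuinely needed and should not be glossed over. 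Everything else is routine once these two facts (affineness of $B_p$ and the finiteness of $[Z:\mathrm{Frac}(C_p)]$) are in place.
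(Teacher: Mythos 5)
Your overall conclusion is true, but the central step of your argument rests on a statement that is false in general. There is no ``PI Artin--Tate lemma'' asserting that the center of an affine PI algebra over a field is affine. The actual Artin--Tate lemma (which is what McConnell--Robson prove) requires the algebra to be a \emph{finite module} over the central subalgebra in question, and that hypothesis is precisely what is unavailable here: Posner's theorem makes $B_p$ module-finite over its center only \emph{after} central localization (and the localization $D_p$ is no longer affine, so Artin--Tate cannot be applied to it), while $B_p$ itself need not be a finite module over $C_p$. Worse, the center of an affine prime PI algebra genuinely can fail to be affine: with $T=k[x,y]$, the ring
$$
R=\begin{pmatrix} T & T\\ xT & k+xT\end{pmatrix}
$$
is an affine prime PI algebra (it is generated over $k$ by $e_{11},\,e_{22},\,xe_{11},\,ye_{11},\,e_{12},\,xe_{21}$, and it contains the nonzero ideal $xM_2(T)$ of $M_2(T)$), yet its center is $k+xk[x,y]$, which is not a finitely generated $k$-algebra. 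This is essentially Bergman's example from Sarraille's work on module-finiteness of PI rings. So the theorem you attribute to \cite[Theorem~13.9.16 or Corollary~13.10.4]{MR} does not exist, and your second paragraph collapses; you flagged the right step as the main obstacle, but the hypothesis that fails is module-finiteness over the center, not affineness of $B_p$ or Noetherianity.

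The paper's proof avoids any claim about $Z(B_p)$ and is elementary: fix a $Z$-basis $v_1,\dots,v_N$ of $D_p$ and generators $b_1,\dots,b_n$ of $B_p$, and let $K\subseteq Z$ be the subfield generated over $\overline{\Bbb F}_p$ by the finitely many structure constants $\beta_{si}^j$ defined by $b_sv_i=\sum_j\beta_{si}^jv_j$. Left multiplication by any noncommutative polynomial in the $b_s$ then has matrix entries in $K$; so writing $z\in Z$ as $z=c^{-1}b$ with $b,c\in B_p$, $c\neq 0$, and comparing coefficients in $bv_i=(cz)v_i$ gives $\gamma_i^jz=\beta_i^j$ with all entries in $K$ and some $\gamma_i^j\neq 0$, whence $z\in K$ and $Z=K$ is finitely generated. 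If you want to rescue your route instead, you must restore module-finiteness before invoking Artin--Tate; the standard device is Schelter's trace ring $T(B_p)$, which is affine and module-finite over its (therefore affine) center, combined with Posner's theorem identifying $Z$ with ${\rm Frac}$ of that center. Finally, your last paragraph is more cautious than needed: Posner's theorem \cite[Theorem~13.6.5]{MR}, which the paper itself uses, says the center of the ring of fractions of a prime PI ring \emph{equals} the fraction field of its center, so $Z={\rm Frac}(C_p)$ on the nose and no enlargement can occur.
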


\begin{proof}
Let $v_1, \dots, v_N$ be a basis of $D_p$ over $Z$, and  let $b_1,\dots,b_n$ be generators of $B_p$. Then, $b_sv_i=\sum_{j=1}^N \beta_{si}^j v_j$, for $\beta_{si}^j\in Z$. Let $K$ denote the field $\overline{\Bbb F}_p(\beta_{si}^j)$.

Now take $z\in Z$. Then $z\in D_p$, so $z=c^{-1}b$, and hence $cz=b$, for some $b,c\in B_p$ with $c\ne 0$. Since $b,c\in B_p$, they are noncommutative polynomials in $b_1,\dots,b_n$ over $\overline{\Bbb F}_p$. So, $bv_i=\sum \beta_i^jv_j$, $cv_i=\sum \gamma_i^jv_j$, with $\beta_i^j,\gamma_i^j\in K$. But $\gamma_i^jz=\beta_i^j$ and $\gamma_i^j$ are not all zero. So,
$z\in K$ and hence $Z=K$. Thus, $Z$ is a finitely generated extension of $\overline{\Bbb F}_p$.
\end{proof}

\subsection{Hopf actions on nondegenerate quantizations} 

Now let $B$ be a filtered quantization with PI reductions. 

\begin{theorem}\label{anyact1}  If $B$ is a nondegenerate  algebra  with PI reductions,
then any finite dimensional Hopf action on $B$ factors through a group action (i.e., the condition that $H$ is semisimple in 
Theorem~\ref{main1} can be dropped). 
\end{theorem}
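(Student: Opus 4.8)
The plan is to follow the strategy of \cite[Theorem~1.2]{CEW2}, replacing the Frobenius chain $Z^{p^m}$ used there for the Weyl algebra by the general chain of subfields $Z(m)$ from Notation~\ref{notation-B}, and to feed the outcome into Theorem~\ref{th3.2gen}.

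First I would reduce to a statement over $\overline{\Bbb F}_p$. As in the proof of Theorem~\ref{main1}, we may assume the $H$-action on $B$ is inner faithful, and for $p\gg 0$ we obtain the reduction $H_p$ acting inner faithfully on the PI domain $B_p$. By \cite[Theorem~2.2]{SV} the action extends to an inner faithful action of $H_p$ on the division algebra of fractions $D_p$ of $B_p$, which by \cite[Corollary~3.2(ii)]{Et2} is a central division algebra of degree $p^n$ over its center $Z$; since $p^n$ is coprime to $(\dim H)!$, the center $Z$ is $H_p$-stable by \cite[Proposition~3.3(i)]{CEW1}. Thus $H_p$ acts on the finitely generated field extension $Z$ of $\overline{\Bbb F}_p$ (finite generation by Lemma~\ref{lem:fieldfinite}).

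The heart of the argument is then to verify the hypotheses of Theorem~\ref{th3.2gen} for $\mathcal H=H_p$, $\mathcal Z=Z$, $\mathcal Z(m)=Z(m)$, and $F=\overline{\Bbb F}_p$. The intersection condition $\bigcap_{m\ge 1}Z(m)=\overline{\Bbb F}_p$ is exactly the nondegeneracy hypothesis (Definition~\ref{nondege}). That $H_p$ preserves each $Z(m)$ follows by reducing the action modulo $p^m$: $H_p$ acts on $B_{p^m}$, hence (the center being characteristic) on its localization $D_{p^m}$ and on the center $Z_m$ of $D_{p^m}$, compatibly with the $H_p$-equivariant projection $D_{p^m}\twoheadrightarrow D_p$, so the image $Z(m)$ of $Z_m$ is $H_p$-stable. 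That $[Z:Z(m)]$ is a finite power of $p$ I would extract from Lemma~\ref{commalg}: central elements of $B_{p^m}$ project into a Frobenius power of $Z$ (their differential vanishes), which together with the finite generation of $Z$ sandwiches $Z(m)$ between $Z$ and some $Z^{p^k}$, forcing $[Z:Z(m)]$ to be a power of $p$. Finally $p>\dim H_p=\dim H$ holds for $p\gg 0$.

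The main obstacle I anticipate is the remaining hypothesis of Theorem~\ref{th3.2gen}, namely that $H_p$ act \emph{inner faithfully on $Z$} (not merely on $D_p$). This is precisely the point where the nonsemisimple case departs from Theorem~\ref{main1}, where cocommutativity was read off directly from $\deg D_p$ via \cite[Proposition~3.3(ii)]{CEW1}. I expect it to be handled, as in \cite{CEW2}, by again invoking the coprimality of $\deg D_p=p^n$ with $(\dim H)!$: a normal Hopf subalgebra of $H_p$ acting trivially on $Z$ would act on $D_p$ by $Z$-algebra automorphisms of a central division algebra of $p$-power degree, and the coprimality should force that subalgebra to be trivial, so that inner faithfulness descends from $D_p$ to $Z$. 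Once all hypotheses are in place, Theorem~\ref{th3.2gen} gives that $H_p$ is a group algebra; hence $H_p$, and therefore $H$, is cocommutative, and $H$ is a group algebra by the Cartier--Gabriel--Kostant theorem \cite[Corollary~5.6.4(3) and Theorem~5.6.5]{Mo}.
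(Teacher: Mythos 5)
Your overall strategy coincides with the paper's: reduce modulo $p$ as in Theorem \ref{main1} to get an inner faithful action of $H_p$ on the division algebra $D_p$ of degree $p^n$ with $H_p$-stable center $Z$, verify the hypotheses of Theorem \ref{th3.2gen} for the chain $Z(m)$, and conclude cocommutativity. However, your justification of the key hypothesis that $[Z:Z(m)]$ is a finite power of $p$ is wrong. You invoke Lemma \ref{commalg}, but that lemma concerns \emph{commutative} algebras (flat deformations of smooth affine varieties over truncated Witt rings); in the paper it is used only in Theorem \ref{nondegen}, where the identification ``central $\Rightarrow$ vanishing differential'' is available precisely because $X$ is generically symplectic --- a hypothesis absent from Theorem \ref{anyact1} --- and even there it is applied to the Poisson center of ${\rm gr}(B_{p^m})$, not to the center of $B_{p^m}$. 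Worse, where such an argument does apply it yields a containment of the form $Z(m)\subset Z^{p^k}$, i.e.\ an \emph{upper} bound on $Z(m)$; this bounds $[Z:Z(m)]$ from \emph{below} and cannot show the index is finite, let alone a $p$-power (a priori it could be infinite or divisible by other primes). What is needed is the \emph{lower} containment $Z^{p^{m-1}}\subset Z(m)$, which your ``sandwich'' asserts but your cited mechanism never produces. The paper obtains it by a purely ring-theoretic observation: if $a\in D_{p^m}$ is central modulo $p^{m-1}$ (with $m\ge 2$), then $a^p$ is central in $D_{p^m}$; hence $Z(m)\supset Z(m-1)^p$, and inductively $Z(m)\supset Z^{p^{m-1}}$. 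Since $Z$ is a finitely generated field extension of the prime field (Lemma \ref{lem:fieldfinite}), $[Z:Z^{p^{m-1}}]$ is a finite power of $p$, so $[Z:Z(m)]$ divides a power of $p$ and is itself one.

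Two further steps are treated too casually. First, you claim $Z(m)$ is $H_p$-stable because ``the center is characteristic''; but centers are \emph{not} automatically preserved by Hopf actions --- that is exactly why \cite[Proposition~3.3(i)]{CEW1} needs the coprimality of $\deg D_p$ with $(\dim H)!$ --- and the stability of $Z(m)$ requires the Witt-vector analogue of that argument, namely the generalization of \cite[Lemma~4.6]{CEW2} that the paper cites. Second, your proposed fix for inner faithfulness on $Z$ (a ``normal Hopf subalgebra'' acting trivially on $Z$ forced to be trivial by coprimality) is not a proof: the elements of $H_p$ acting trivially on $Z$ do not form a normal Hopf subalgebra in any evident way, and making a coprimality argument work at this point is precisely the delicate endgame of \cite[Theorem~1.1]{CEW2}, which the paper imports wholesale (``the rest of the proof \dots\ is modified as in the proof of Theorem \ref{main1}''). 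Deferring to \cite{CEW2} there is legitimate --- the paper does the same --- but the first gap above concerns the one genuinely new ingredient of this proof, and your argument for it would fail.
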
 

\begin{proof}
The proof is obtained by combining the proofs of Theorem \ref{main1} and \cite[Theorem~1.1]{CEW2}  with the following modifications. 
\smallskip

1. In \cite[Lemma~2.5]{CEW2} and below, $x_i,y_i$ should be replaced by any finite set of generators $L_1,\dots, L_r$ of $B$, 
and the number $2n$ in the proof of \cite[Lemma~4.3]{CEW2} should be replaced by $r$
(cf. \cite[proof of Theorem~1.2]{CEW2}). 
\smallskip

2. The discussion in \cite[Subsection~2.4, Lemma~4.7, Proposition~4.8]{CEW2} (needed to justify the assumptions of \cite[Theorem~3.2]{CEW2})
becomes unnecessary. Instead, note that if $a\in D_{p^m}$ is central modulo $p^{m-1}$ for some $m\ge 2$, then $a^p$ is central. Hence $Z(m)\supset Z(m-1)^p$, implying that $Z(m)\supset 
Z^{p^{m-1}}$ and therefore $[Z:Z(m)]$ is finite (by Lemma~\ref{lem:fieldfinite}) and is a power of $p$. Now the proof proceeds by invoking Theorem \ref{th3.2gen},
whose assumptions are satisfied by the nondegeneracy property of $B$
and using a straightforward generalization of \cite[Lemma~4.6]{CEW2}.  
\smallskip

3. The rest of the proof of \cite[Theorem~1.1]{CEW2}
is modified as in the proof of Theorem \ref{main1}. Namely, we use 
the PI reduction condition and \cite[Corollary~3.2]{Et2} 
which says that the PI degree of $B_p$ is a power of $p$.  
\end{proof}

The next theorem shows that the nondegeneracy assumption is satisfied, in particular, when 
${\rm gr}(B)=O(X)$, where $X$ is generically symplectic. 
 
\begin{theorem}\label{nondegen} Let $B$ be a  quantization  with PI reductions, and assume that 
${\rm gr}(B)=O(X)$, where $X$ is a generically symplectic Poisson variety. Then, 
any action of a finite dimensional Hopf algebra $H$ on $B$ factors through a group action. 
\end{theorem}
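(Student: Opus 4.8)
The plan is to deduce this from Theorem~\ref{anyact1}: since $B$ already has PI reductions by hypothesis, it suffices to verify that $B$ is \emph{nondegenerate} in the sense of Definition~\ref{nondege}, namely that $\bigcap_{m\ge 1}Z(m)=\overline{\Bbb F}_p$ for almost all $p$. The generic symplecticity of $X$ should be exactly what forces this. So I would fix $p\gg 0$, recall from Notation~\ref{notation-B} the central division algebra $D_p$ with center $Z$, the centers $Z_m$ of $D_{p^m}$, and their images $Z(m)\subseteq D_p$, and reduce the whole theorem to an analysis of the chain $Z=Z(1)\supseteq Z(2)\supseteq\cdots$.

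By Lemma~\ref{lem:fieldfinite} the center $Z$ is a finitely generated field extension of $\overline{\Bbb F}_p$, and for any such field an elementary valuation-theoretic argument gives $\bigcap_{k\ge 0}Z^{p^k}=\overline{\Bbb F}_p$ (a non-constant element has bounded degree data that cannot be divisible by $p^k$ for all $k$). Hence it is enough to prove the two inclusions $Z^{p^{m-1}}\subseteq Z(m)$ and $Z(m)\subseteq Z^{p^{m-1}}$, for then $\bigcap_m Z(m)=\bigcap_m Z^{p^{m-1}}=\overline{\Bbb F}_p$. The first inclusion is precisely the lower bound already observed in the proof of Theorem~\ref{anyact1} (if $a\in D_{p^m}$ is central modulo $p^{m-1}$ then $a^p$ is central, so $Z(m)\supseteq Z(m-1)^p\supseteq Z^{p^{m-1}}$). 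Thus the entire content is the \emph{upper} bound $Z(m)\subseteq Z^{p^{m-1}}$.

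To establish the upper bound I would exploit the symplectic structure. The Poisson bracket on ${\rm gr}(B)=O(X)$ is nondegenerate on a dense symplectic open $U\subseteq X$, where it is given by $\{f,\cdot\}=\omega^{-1}(df,\cdot)$, so over $U$ the condition that an element Poisson-commutes with everything becomes $\{f,\cdot\}=0\iff df=0$. Restricting to the smooth symplectic locus, where the reduction $B_p$ is controlled and where $Z$ (being the fraction field of the $p$-center) is faithfully seen, I would realize the localized algebra $D_{p^m}$ as a quantization over the truncated Witt ring $W_{m,p}$ of a smooth affine symplectic deformation $\widetilde Y$ reducing to $U$, with classical term $A_m=O(\widetilde Y)/(p^m)$ and $A=A_1=O(U_p)$. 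The centrality of an element of $D_{p^m}$ then forces its symbol in $A_m$ to lie in $\ker(d_m)$, and Lemma~\ref{commalg} identifies the image of $\ker(d_m)$ in $A$ with $A^{p^m}$. Consequently every element of $Z(m)$ reduces into ${\rm Frac}(A)^{p^m}=(\,{\rm Frac}(A)^p)^{p^{m-1}}=Z^{p^{m-1}}$, which is the desired upper bound. Combined with the lower bound this yields $Z(m)=Z^{p^{m-1}}$ and hence $\bigcap_m Z(m)=\overline{\Bbb F}_p$, so $B$ is nondegenerate and Theorem~\ref{anyact1} applies.

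The main obstacle is this upper-bound step: translating the purely algebraic centrality condition in the noncommutative PI algebra $D_{p^m}$ into the commutative, differential statement ``the symbol lies in $\ker(d_m)$'' to which Lemma~\ref{commalg} applies. This requires (a) passing to the smooth symplectic open locus and arranging a genuine flat symplectic deformation $\widetilde Y$ over $W_{m,p}$ that meets the hypotheses of Lemma~\ref{commalg}; (b) checking that the center of the global algebra is determined on this locus precisely because $Z$ is a field; and (c) handling the filtered, non-graded passage from the symbol calculus to the mod-$p^m$ reduction, so that the nondegeneracy of $\omega$ genuinely converts centrality into vanishing of $d_m$. Everything else — the reduction to nondegeneracy and the final appeal to Theorem~\ref{anyact1} — is then formal.
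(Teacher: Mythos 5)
Your overall frame (reduce to nondegeneracy, invoke Theorem~\ref{anyact1}, use the lower bound $Z^{p^{m-1}}\subseteq Z(m)$, and bring in generic symplecticity via Lemma~\ref{commalg}) matches the paper, but the key step of your argument has a genuine gap. You claim the \emph{upper} bound $Z(m)\subseteq Z^{p^{m-1}}$ and derive it from the chain ``centrality in $D_{p^m}$ $\Rightarrow$ symbol lies in $\ker(d_m)$ $\Rightarrow$ element lies in ${\rm Frac}(A)^{p^m}=({\rm Frac}(A)^p)^{p^{m-1}}=Z^{p^{m-1}}$.'' This conflates two different objects: Lemma~\ref{commalg} controls only the \emph{leading term} (symbol) of a central element, and that leading term lives in $Z_0:={\rm Frac}({\rm gr}(B_p))$, the function field of the Poisson variety --- not in $Z$, the center of the division algebra $D_p$. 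The identification ${\rm Frac}(A)^p=Z$ is unjustified (these are subfields of different rings; the symbol map $C\to {\rm gr}(B_p)$ is multiplicative but not additive, so it does not embed $Z$ into $Z_0$), and knowing that the symbol of $z$ is a $p^m$-th power in $Z_0$ says nothing about the lower-order terms of $z$, hence does not place $z$ itself in $Z^{p^{m-1}}$. Indeed, the paper never proves $Z(m)\subseteq Z^{p^{m-1}}$ and its proof is structured precisely to avoid needing it.

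What the paper does instead, and what your sketch is missing, is the following mechanism. First it proves $Z(m)={\rm Frac}(C(m))$ (equation~\eqref{nondeg1}), where $C(m)$ is the image in $B_p$ of the center $C_m$ of $B_{p^m}$; this needs the fraction decomposition $\widetilde z=c^{-1}b$ with $c\in C_m'$, which your sketch does not address. Then it passes to associated graded only at the level of leading terms: ${\rm gr}(C(m))\subseteq C_0(m)$ (Poisson center) and, by generic symplecticity plus Lemma~\ref{commalg}, ${\rm Frac}(C_0(m))\subseteq Z_0^{p^m}$ (equations~\eqref{nondeg2},~\eqref{nondeg3}). Finally --- and this is the step that replaces your invalid upper bound --- for $z\in\bigcap_m Z(m)$ it writes $z=f_m/g_m$ with $f_m,g_m\in C(m)$, observes that the ratio of leading terms $z_0=f_m^0/g_m^0$ is independent of $m$ and lies in $\bigcap_m Z_0^{p^m}=\overline{\Bbb F}_p$, and then derives a degree contradiction from $z-z_0=(f_m-z_0g_m)/g_m$ unless $z=z_0$ (equation~\eqref{eqdeg}). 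Without some such argument converting the leading-term information into a statement about $z$ itself, your reduction to the field-theoretic fact $\bigcap_k Z^{p^k}=\overline{\Bbb F}_p$ does not go through.
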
 

\begin{proof} 
By Theorem \ref{anyact1}, it suffices to show that $B$ is a nondegenerate quantization, i.e., that $\bigcap_{m\ge 1}Z(m)=\overline{\Bbb F}_p$, for $p \gg 0$. 

Recall  Notation~\ref{notation-B}.
Let $C$ be the center of $B_p$; thus, by Posner's theorem \cite[Theorem~13.6.5]{MR}, the field ${\rm Frac}(C)$ of fractions of $C$ is $Z$. 
Let $C_m$ be the center of $B_{p^m}$, and $C(m)$ be its image in $B_p$. 

Let $a\in B_{p^m}$ be central modulo $p$ (i.e., 
the image $\overline{a}$ of $a$ in $B_p$ lies in $C$). Then,  
$a^p$ is central modulo $p^2$, $a^{p^2}$ is central modulo $p^3$, and so on. Hence,$C^{p^{m-1}}\subset C(m)$. Let $C_m'$ be the preimage of $C^{p^{m-1}}$ in $C_m$. 
Then the image of $C_m'$ in $B_p$ is $C^{p^{m-1}}$. 

We claim that 
\begin{equation}\label{nondeg1}
Z(m) = \text{Frac}(C(m)).
\end{equation}
Indeed, it is clear that Frac($C(m)$)$\subseteq Z(m)$. On the other hand,
 observe that any element $a\in D_{p^m}$ can be written as $a=c^{-1}b$, where 
$c\in C_m'$ is nonzero modulo $p$, 
and $b\in B_{p^m}$ (as this can be done modulo $p$, since 
$D_p=Z^{p^{m-1}}B_p$). Now given $z\in Z(m)$, let $\widetilde{z}$ be its lift to $Z_m$. 
Writing $\widetilde{z}=c^{-1}b$ as above, we see that $b:=c\widetilde{z}\in C_m$. 
Let $\overline{b}\in C(m)$, $\overline{c}\in C^{p^{m-1}}\subset C(m)$ 
be the reductions of $b,c$ modulo $p$, respectively. 
We have $\overline{b}=\overline{c}z$, hence $z=\overline{c}^{-1}\overline{b}\in {\rm Frac}
(C(m))$, as claimed.  

Now let $B_{0p^m}:={\rm gr}(B_{p^m})$. This is a Poisson algebra over the truncated Witt ring $W_{m,p}$. Let $C_{0m}$ be the Poisson center of $B_{0p^m}$, and $C_0(m)$ be the image of $C_{0m}$ in $B_{0p}$. Then ${\rm gr}(C_m)\subset C_{0m}$ and hence 
\begin{equation} \label{nondeg2}
\textrm{gr}(C(m))
\subset C_0(m). 
\end{equation}

Let $Z_0:={\rm Frac}(B_{0p})$. Since $X$ is generically symplectic, 
$C_{0m}$ coincides with the set of all $f\in B_{0p^m}$ such that $df=0$ . By Lemma~\ref{commalg} (taking $\widetilde{Y}$ to be the reduction modulo $p^m$  of a symplectic dense affine open subset $U\subset X$), this implies that 
\begin{equation}\label{nondeg3}
\text{Frac}(C_0(m))\subset Z_0^{p^m}.
\end{equation}

Now suppose that $z\in \bigcap_{m\ge 1}Z(m)$ with $z\ne 0$. Then by \eqref{nondeg1}, for each $m$,
we have $z=f_m/g_m$ for $f_m,g_m\in C(m)$. 
Let $f_m^0, g_m^0\in {\rm gr}(C(m))$ 
be the leading terms of $f_m,g_m$. By \eqref{nondeg2}, $f_m^0, g_m^0\in C_0(m)$. Then for any $m,n$ we have 
$f_m^0g_n^0=f_n^0g_m^0$ since $f_mg_n=f_ng_m$. 
So $z_0:=f_m^0/g_m^0$ is independent of $m$ and by \eqref{nondeg3} belongs  
to $Z_0^{p^m}$ for all $m\ge 0$. Since $\bigcap_{m\ge 1}Z_0^{p^m}$ is a perfect field that is finitely generated over $\overline{\Bbb F}_p$, we get that $\bigcap_{m\ge 1}Z_0^{p^m} = \overline{\Bbb F}_p$. So, $z_0\in \overline{\Bbb F}_p$ is a nonzero 
constant, and $f_m^0=z_0g_m^0$ for all $m$; in particular, 
\begin{equation}\label{eqdeg}
\deg(f_m)=\deg(g_m).
\end{equation}  
Now $z-z_0=\frac{f_m-z_0g_m}{g_m}$, and the numerator has degree strictly less than $\deg{g_m}$. 
This violates \eqref{eqdeg}, so $z-z_0=0$, i.e. $z\in \overline{\Bbb F}_p$.  
This proves the theorem.  
\end{proof} 

\begin{corollary} \label{gensymp} 
Let $B$ be one of the following algebras: 

\begin{enumerate}
\item[(i)] a quotient of a finite W-algebra by a central character; 

\item[(ii)] a Hamiltonian reduction of a Weyl algebra
by a reductive group action; in particular, the coordinate ring of a quantized quiver variety;

\item[(iii)] a spherical symplectic reflection algebra $H_{1,c}(G,V)$; or

\item[(iv)] the tensor product of any of the algebras in \textnormal{(i)-(iii)}.
\end{enumerate}

\noindent Then, any action of a finite dimensional Hopf algebra $H$ on $B$ factors through a group action. 
\end{corollary}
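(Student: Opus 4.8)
The plan is to reduce the corollary to Theorem~\ref{nondegen}, whose hypotheses are that $B$ is a quantization with PI reductions and that ${\rm gr}(B)=O(X)$ for a generically symplectic Poisson variety $X$. For each of the algebras listed I would verify these two conditions; the conclusion then follows at once.

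The PI reduction property requires no new argument: it was already established while proving Corollary~\ref{modp-examples1}. Specifically, parts (ii), (iii), (iv) of that corollary show that central reductions of finite W-algebras, quantum Hamiltonian reductions of Weyl algebras, and spherical symplectic reflection algebras are algebras with PI reductions, while part (v) handles tensor products. So for the first hypothesis I would simply invoke Corollary~\ref{modp-examples1}.

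For the generic symplecticity, I would read off ${\rm gr}(B)$ from the descriptions in Subsection~\ref{examp}. For a central reduction $U_\theta(\g,e)$ of a finite W-algebra, ${\rm gr}(U_\theta(\g,e))=O(X)$ with $X$ the nilpotent Slodowy slice, which is generically symplectic; note that here one must pass to the central reduction, since the full W-algebra has ${\rm gr}$ equal to the functions on the (larger) Slodowy slice, which is not generically symplectic. For a quantum Hamiltonian reduction $B(\chi)$, under the flatness assumption on $\mu$ together with the reducedness and irreducibility assumptions on $\mu^{-1}(0)$ recorded in Subsection~\ref{examp}, ${\rm gr}(B(\chi))=O(\mu^{-1}(0)/G)$ is the algebra of functions on an irreducible generically symplectic variety; this includes the quantized quiver varieties. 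For a spherical symplectic reflection algebra $H_{1,c}(G,V)$, the case $t=1$ gives ${\rm gr}=O(V/G)$ with $V/G$ generically symplectic. In each case the generically symplectic property is exactly the one recorded in the corresponding paragraph of Subsection~\ref{examp}.

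For the tensor product case (iv) I would argue that generic symplecticity is stable under products: if $U_i\subset X_i$ are symplectic dense open subsets, then $U_1\times\cdots\times U_r$ is a symplectic dense open subset of $X_1\times\cdots\times X_r$ (equipped with the sum of the pulled-back forms). Since ${\rm gr}$ commutes with tensor products, ${\rm gr}(B_1\otimes\cdots\otimes B_r)=O(X_1\times\cdots\times X_r)$ is generically symplectic, and the PI reduction property follows from Corollary~\ref{modp-examples1}(v); hence Theorem~\ref{nondegen} applies. I expect no genuine obstacle here: all the analytic difficulty sits inside Theorem~\ref{nondegen} (through Theorem~\ref{anyact1} and Lemma~\ref{commalg}), and the geometric inputs are standard. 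The one point deserving care is that one tensors generically symplectic algebras only among themselves --- tensoring instead with an arbitrary commutative domain, as allowed in Corollary~\ref{modp-examples1}(v), would contribute a degenerate Poisson factor and destroy generic symplecticity, which is why case (iv) is stated as it is.
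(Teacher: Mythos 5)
Your proposal is correct and follows exactly the paper's own route: the paper's proof of Corollary~\ref{gensymp} likewise just observes that in cases (i)--(iv) one has ${\rm gr}(B)=O(X)$ with $X$ generically symplectic (as recorded in Subsection~\ref{examp}) and then invokes Theorem~\ref{nondegen}, with the PI reduction hypothesis supplied by Corollary~\ref{modp-examples1}. Your additional points --- that generic symplecticity is preserved under products, and that case (iv) deliberately excludes tensoring with arbitrary commutative domains (and that for W-algebras one must pass to the central reduction) --- are correct elaborations of details the paper leaves implicit.
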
 

\begin{proof} 
It is explained in Subsection \ref{examp} that in examples (i)-(iv), we have that ${\rm gr}(B)=O(X)$ where $X$ is generically symplectic. This  implies the corollary.   
\end{proof} 

\begin{proposition}\label{main1b} 
Theorems \ref{main1},~\ref{main1a},~\ref{anyact1},~\ref{nondegen} remain valid if $B$ is replaced by its 
quotient division algebra ${\rm Frac}(B)$.  
\end{proposition}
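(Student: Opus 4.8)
The plan is to show that each of the four theorems, which concludes that a Hopf action on a filtered quantization $B$ factors through a group action, transfers verbatim to the division algebra $\mathrm{Frac}(B)$. The key structural observation is that $\mathrm{Frac}(B)$ is itself an algebra with PI reductions (indeed it is the full localization of the reductions $B_p$), so the reduction-modulo-$p$ machinery already developed applies to it directly; moreover, $\mathrm{Frac}(B)$ shares with $B$ the same center $Z$ and the same division algebra of fractions $D_p$ for $p \gg 0$. First I would observe that any action of a finite dimensional Hopf algebra $H$ on $B$ extends canonically to an action on $\mathrm{Frac}(B)$: localization at the (Ore) set of nonzero elements is functorial, and since $H$ acts by filtered maps, the $H$-action on $B$ lifts to $\mathrm{Frac}(B)$ by the standard formula for the action on inverses (using that $H$-module algebra structures extend to localizations, as $h\cdot(a^{-1}) = -a^{-1}(h_{(1)}\cdot a)(\text{corrections})$ via the antipode). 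Conversely, any inner faithful $H$-action on $\mathrm{Frac}(B)$ restricts to one on $B$ whenever $B$ is $H$-stable, and inner faithfulness is detected already on the reductions $D_p$.

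The main mechanism is that the proofs of Theorems~\ref{main1}, \ref{main1a}, \ref{anyact1}, and \ref{nondegen} all factor through the division algebra $D_p$ of fractions of $B_p$, never through $B_p$ itself in an essential way. Concretely, in the proof of Theorem~\ref{main1} one passes immediately from $B_p$ to $D_p$ and then invokes \cite[Theorem~2.2]{SV} to see that $H_p$ acts inner faithfully on $D_p$, after which the entire argument (the $p$-center, the degree $p^n$ being coprime to $(\dim H)!$, the conclusion of cocommutativity via \cite[Proposition~3.3(ii)]{CEW1}) takes place at the level of $D_p$. Since $\mathrm{Frac}(B)$ and $B$ have literally the same reductions $D_p$ for large $p$ --- the localization of $\mathrm{Frac}(B)$ at its center again yields $D_p$, as $\mathrm{Frac}(B)_R$ can be taken to be a localization of $B_R$ --- the identical chain of deductions applies. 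The same is true for the nondegeneracy criterion: the quantities $Z$, $Z(m)$, and the intersection $\bigcap_{m\ge 1}Z(m)$ of Notation~\ref{notation-B} depend only on $D_p$ and its reductions modulo $p^m$, not on the particular order $B_p$ inside $D_p$, so $\mathrm{Frac}(B)$ is nondegenerate exactly when $B$ is, and Theorems~\ref{anyact1} and \ref{nondegen} carry over unchanged.

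I would therefore structure the proof as follows. First, record that $\mathrm{Frac}(B)$ inherits an $R$-order, namely the localization of $B_R$ at the center of its reductions, and that its mod-$p$ reduction is precisely $D_p$; hence $\mathrm{Frac}(B)$ is a (nondegenerate, when $B$ is) algebra with PI reductions, so the definitions apply. Second, verify that an $H$-action extends uniquely from $B$ to $\mathrm{Frac}(B)$ and is inner faithful on one if and only if it is on the other, using \cite[Theorem~2.2]{SV} to reduce inner faithfulness to the common reduction $D_p$. Third, observe that in each of the four cited proofs the only place $B_p$ enters is as a source for $D_p$ and for generators over which the center is expressed (Lemma~\ref{lem:fieldfinite}); replacing $B_p$ by $D_p$ as the ambient algebra leaves every subsequent step intact, since $D_p$ is already its own full localization. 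Conclude that $H$ is a group algebra in each case.

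The step I expect to require the most care is the extension of the Hopf action to the localization together with the matching of inner faithfulness. One must check that the Ore localization is compatible with the $H$-module algebra structure --- that the formula for $h$ acting on fractions is well defined and associative --- and that passing to $\mathrm{Frac}(B)$ neither creates nor destroys inner faithfulness. The latter is handled cleanly by \cite[Theorem~2.2]{SV}, which guarantees that an inner faithful action on a prime PI algebra remains inner faithful on its localization, so the genuine subtlety is only the functoriality of the action under localization; once that is in place, the rest is a transcription of the existing proofs with $D_p$ in place of $B_p$.
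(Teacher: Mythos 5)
There is a genuine gap, and it sits at the heart of the proposition. The statement concerns an $H$-action on the division algebra ${\rm Frac}(B)$, and such an action need \emph{not} preserve $B$ (nor any order $B_R$); your argument never confronts this. Your reduction to actions on $B$ rests on two moves, both of which beg the question: the extension direction (``any action on $B$ extends to ${\rm Frac}(B)$'') is the wrong direction for this proposition, and is moreover justified by the false premise that $H$ acts by filtered maps --- eliminating exactly that filtration-preservation hypothesis is the stated point of the paper; and the restriction direction carries the qualifier ``whenever $B$ is $H$-stable,'' which is precisely what fails in general. For the same reason, the assertion that ${\rm Frac}(B)$ ``has the same reductions $D_p$'' cannot be implemented as stated: to reduce the action modulo $p$ (or $p^m$) one must first exhibit an order, defined over a finitely generated ring, on which the action makes sense, and $B_R$ is not such an order because the elements $h_m\cdot b_j$ are only fractions.

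The missing construction --- which is the content of \cite[Proposition~4.4]{CEW1}, the result the paper's proof invokes --- is to write the finitely many elements $h_m\cdot b_j\in{\rm Frac}(B)$ as fractions with numerators and denominators in $B_R$ (equivalently, to pass to the finitely generated $H$-stable subalgebra $B'\subset{\rm Frac}(B)$ generated by $H\cdot B$, which satisfies ${\rm Frac}(B')={\rm Frac}(B)$), enlarge $R$ to contain all the coefficients involved, and only then reduce modulo $p$: for $p\gg 0$ the denominators have nonzero image in the domain $B_p$, hence become invertible in the division ring $D_p$ (the full ring of fractions of $B_p$, by the PI reduction property), so one obtains a genuine inner faithful $H_p$-action on $D_p$ (via \cite[Theorem~2.2]{SV}) to which the coprimality-and-cocommutativity argument applies. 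For Theorems \ref{anyact1} and \ref{nondegen} the same must be carried out modulo $p^m$ so that the $H_p$-stability of $Z(m)$ even makes sense. Your instincts that everything ultimately runs through $D_p$, that \cite[Theorem~2.2]{SV} controls inner faithfulness under localization, and that nondegeneracy is a property of $B$ alone (so the Poisson-geometric part of Theorem \ref{nondegen} needs no change) are all correct; but without the denominator/order construction there is no $H_p$-action on $D_p$ to speak of, so the existing proofs cannot simply be ``applied verbatim.''
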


\begin{proof} The proofs are obtained by combining the proofs of 
Theorem \ref{main1}, \ref{main1a}, \ref{anyact1}, \ref{nondegen} with the proof of \cite[Proposition~4.4]{CEW1}. (The exact form of the generators of $B$ used in the proof of
\cite[Proposition~4.4]{CEW1} is irrelevant for the argument.)   
\end{proof}

\section{Semisimple Hopf actions on quantum polynomial algebras}  \label{sec:qpoly}

We now turn to finite dimensional Hopf actions on quantum polynomial algebras
$$k_{\bold q}[x_1,\dots,x_n] := k \langle x_1, \dots x_n \rangle/ (x_ix_j-q_{ij}x_jx_i),$$
where ${\bold q}=(q_{ij})$, $q_{ij}\in k^\times$ with $q_{ii}=1$ and $q_{ij}q_{ji}=1$. We 
view ${\bold q}$ as a point of the algebraic torus $(k^\times)^{n(n-1)/2}$ with coordinates $q_{ij}$, $i<j$. Let $\langle {\bold q}\rangle $ be the subgroup in $(k^\times)^{n(n-1)/2}$ 
generated by $\bold q$, and let $G_{\bold q}$ be its Zariski closure. 
Let $G_{\bold q}^0$ be the connected component of the identity in $G_{\bold q}$. 

\begin{theorem} \label{main3} Let $H$ be a semisimple Hopf algebra of dimension $d$. If the order of $G_{\bold q}/G_{\bold q}^0$ is coprime to $d!$, then any $H$-action on \linebreak$B:=k_{\bold q}[x_1,\dots,x_n]$ factors through a group action. 
\end{theorem}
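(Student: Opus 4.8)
The plan is to follow the reduction-modulo-$p$ strategy established in Theorem~\ref{main1}, adapting it to the quantum polynomial algebra by using the PI structure of reductions at roots of unity. First I would reduce to the case where the $H$-action is inner faithful. The key difficulty compared to the filtered case is that $k_{\bold q}[x_1,\dots,x_n]$ is generally \emph{not} PI over $k$ (its reductions need not have a large center when the $q_{ij}$ are generic), so I cannot directly invoke the PI-reduction machinery. Instead, I would reduce modulo a large prime $p$: choose a finitely generated subring $R\subset k$ over which $H$, its action, and the parameters $\bold q$ are all defined, and for $p\gg 0$ pick $\psi: R\to \overline{\Bbb F}_p$. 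As in Lemmas~\ref{lem:filtered} and~\ref{lem:EGA}, this produces an inner-faithful action of a semisimple cosemisimple Hopf algebra $H_p$ (dimension $d$) on $B_p=k_{{\bold q}_p}[x_1,\dots,x_n]$ over $\overline{\Bbb F}_p$, where ${\bold q}_p=\psi(\bold q)$.

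The main point is that over $\overline{\Bbb F}_p$ the images $\psi(q_{ij})$ are automatically roots of unity, so $B_p$ is a PI domain; I must control the PI degree, and here is where the hypothesis on $|G_{\bold q}/G_{\bold q}^0|$ and Perucca's number-theoretic result from the Appendix enter. The center of $B_p$ is spanned by monomials $x_1^{a_1}\cdots x_n^{a_n}$ such that $\prod_j \psi(q_{ij})^{a_j}=1$ for all $i$, and the PI degree equals the index of this sublattice in $\Bbb Z^n$ (equivalently, the order of the image of the bicharacter $\psi(\bold q)$). I would argue that the order of the multiplicative group generated by the $\psi(q_{ij})$ in $\overline{\Bbb F}_p^\times$, for $p$ ranging over a density-one set of primes, can be taken coprime to $d!$: this is precisely the kind of statement Perucca's theorem delivers, using the hypothesis that $|G_{\bold q}/G_{\bold q}^0|$ is coprime to $d!$ to control the ``finite'' torsion obstruction while the connected part $G_{\bold q}^0$ contributes orders that avoid any fixed finite set of primes. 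Consequently the PI degree of $B_p$ is coprime to $d!$ for a suitable infinite set of primes $p$.

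Once the PI degree is coprime to $d!$, I would let $D_p:=\operatorname{Frac}(B_p)$, a central division algebra whose degree over its center is coprime to $d!$. By \cite[Theorem~2.2]{SV}, $H_p$ acts inner faithfully on $D_p$, and then \cite[Proposition~3.3(ii)]{CEW1} forces $H_p$ to be cocommutative. Since this holds for infinitely many $p$, the standard specialization argument (as in the proof of \cite[Theorem~4.1]{CEW1}, invoked at the end of the proof of Theorem~\ref{main1}) shows $H$ itself is cocommutative, hence a group algebra by Cartier--Gabriel--Kostant.

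The hard part will be the number-theoretic step: establishing that the order of the subgroup generated by $\psi(q_{ij})$ in $\overline{\Bbb F}_p^\times$ is coprime to $d!$ for infinitely many $p$. This is not elementary, since the $q_{ij}$ may be non-roots of unity with complicated multiplicative relations, and it is exactly why the coprimality is phrased in terms of the component group $G_{\bold q}/G_{\bold q}^0$ of the Zariski closure rather than in terms of the individual orders of the $q_{ij}$. I expect the clean formulation and application of Perucca's result to be the crux of the argument; the Hopf-theoretic deductions afterward are routine adaptations of the already-cited machinery.
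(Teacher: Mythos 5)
Your outline coincides with the paper's strategy: reduce modulo $p$, control the PI degree of $B_p$ using Perucca's theorem via the hypothesis on $G_{\bold q}/G_{\bold q}^0$, pass to $D_p={\rm Frac}(B_p)$ and conclude that $H_p$ is cocommutative by \cite[Proposition~3.3(ii)]{CEW1}, then lift cocommutativity back to $H$. However, the step you defer (``I would argue that the order of the multiplicative group generated by the $\psi(q_{ij})$ \dots can be taken coprime to $d!$: this is precisely the kind of statement Perucca's theorem delivers'') is exactly where your proof has a genuine gap, and it does not follow from Perucca's theorem as you invoke it. Theorem~\ref{ntresult} and Corollary~\ref{ntcor} concern a $K$-rational point of a torus over a \emph{number field} $K$ and reduction modulo primes $\mathfrak{p}$ of $K$; your ring $R$ is merely a finitely generated subring of $k$, and the $q_{ij}$ may be transcendental over $\Bbb Q$, in which case ``the order of $\psi(\bold q)$'' for an arbitrary (or even generic) $\psi: R\to \overline{\Bbb F}_p$ is not controlled by any density statement over a number field.

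The paper bridges this with an intermediate specialization that your proposal is missing: one first chooses a \emph{generic} ring homomorphism $\xi: R\to K$ into a number field $K$ and sets $R'=\xi(R)$. Genericity guarantees that every multiplicative relation satisfied by the $\xi(q_{ij})$ already holds for the $q_{ij}$, so by Example~\ref{exaaa}(2) one has $|G_{\xi(\bold q)}/G_{\xi(\bold q)}^0|=|G_{\bold q}/G_{\bold q}^0|$; only now can Corollary~\ref{ntcor} be applied, to the point $\xi(\bold q)\in (K^\times)^{n(n-1)/2}$, producing infinitely many primes $p$ with primes $\mathfrak{p}\subset R'$ over them such that the order $N$ of $\psi\circ\xi(\bold q)$ is finite and coprime to $d!$ for generic $\psi: R'\to \overline{\Bbb F}_p$ annihilating $\mathfrak{p}$. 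The lifting at the end then has an extra layer your version omits: cocommutativity of $H_p$ for infinitely many $p$ gives cocommutativity of $H_{R'}$, and since this holds for generic $\xi$, one deduces cocommutativity of $H_R$, hence of $H$. Two minor further inaccuracies, neither fatal: Perucca yields a positive Dirichlet density of primes, not a density-one set; and the PI degree of $B_p$ is not the index of the central sublattice (the paper only uses the cruder bound that it divides $N^n$, since each $x_i^N$ is central), though coprimality to $d!$ survives either way.
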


\begin{proof}
We may assume that $H$ acts on $B:=k_{\bold q}[x_1,\dots,x_n]$ inner faithfully. 
Let $R\subset k$ be a finitely generated subring containing $q_{ij}$, let \linebreak
$B_R:=R_{\bold q}[x_1,\dots,x_n]$ be the quantum polynomial algebra defined over $R$, 
and $H_R$ be a Hopf $R$-order with an action on $B_R$ which becomes 
the action of $H$ on $B$ upon tensoring with $k$. 

Similarly to the proof of Theorem~\ref{main1}, we need to control the PI degree of $B_R$ after reducing modulo $p$; we employ a version of the number-theoretic result of A. Perucca (as presented in Appendix) to do so.

Given a number field $K$ and a ring homomorphism $\xi: R\to K$, let $R':=\xi(R)$, 
$H_{R'}:=H_R\otimes_R R'$, $B_{R'}:=B_R\otimes_R R'=R'_{\xi(\bold q)}[x_1,\dots,x_n]$. 
Then, $H_{R'}$ acts on $B_{R'}$ inner faithfully. For a generic choice of $\xi$, any multiplicative relation satisfied 
by $\xi(q_{ij})$ is already satisfied by $q_{ij}$, so by Example~\ref{exaaa}, we have
$|G_{\bold q}/G_{\bold q}^0|=|G_{\xi(\bold q)}/G_{\xi(\bold q)}^0|$. 
By Corollary~\ref{ntcor}, there exist infinitely many primes $p$ with prime 
ideals ${\mathfrak{p}}\subset R'$ lying over them such that, for a generic homomorphism $\psi: R'\to \overline{\Bbb F}_p$
annihilating ${\mathfrak{p}}$, the order $N:=N_{\mathfrak{p}}$ of $\psi\circ \xi(\bold q)$ is finite and 
  relatively prime to $d!$. Let $H_p:=H_{R'}\otimes_{R'}\overline{\Bbb F}_p$ and  $B_p:=B_{R'}\otimes_{R'}\overline{\Bbb F}_p$ be the corresponding reductions of $H$ and $B$ modulo $p$. 
For large enough $p$, the Hopf algebra $H_p$ is semisimple and cosemisimple (by \cite[Lemma~2.5]{CEW1}), and $B_p$ is a PI domain with PI degree dividing $N^n$ (as $x_i^N$ are central elements in $B_p$). Moreover, 
$H_p$ acts on $B_p$ inner faithfully by a version of \cite[Proposition~2.4]{CEW1} adapted to the algebra $B$ (with the same proof).

Let $D_p$ be the quotient division algebra of $B_p$.
Then, the PI degree of $D_p$ divides $N^n$, and  is therefore coprime to $d!$. Further,  $H_p$ acts inner faithfully on $D_p$.   
Hence, \cite[Proposition~3.3(ii)]{CEW1} implies that $H_p$ is cocommutative. 
Since this happens for infinitely many primes, we conclude that $H_{R'}$ is cocommutative.
Since this happens for generic maps $\xi$, this implies that $H_R$ is cocommutative. 
Thus $H$ is cocommutative, i.e., $H$ is a group algebra.    
\end{proof} 

\begin{corollary}\label{main3a} The conclusion of 
Theorem \ref{main3} holds when $q_{ij}=q^{m_{ij}}$, 
where $m_{ij}=-m_{ji}$ are integers, and the order of $q\in k^\times$
is infinite or is coprime to $d!$.  
\end{corollary}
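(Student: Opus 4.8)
The plan is to deduce this directly from Theorem~\ref{main3}: it suffices to show that, under the hypothesis $q_{ij}=q^{m_{ij}}$, the order $|G_{\bold q}/G_{\bold q}^0|$ is coprime to $d!$ (in fact it will be either $1$ or a divisor of the order of $q$). The starting observation I would use is that $\bold q$ is the image of $q$ under the one-parameter subgroup (cocharacter) $\lambda\colon k^\times\to (k^\times)^{n(n-1)/2}$ given by $\lambda(t)=(t^{m_{ij}})_{i<j}$, so that $\langle\bold q\rangle=\lambda(\langle q\rangle)$. I would then analyze the Zariski closure $G_{\bold q}=\overline{\lambda(\langle q\rangle)}$ in the two cases distinguished by the order of $q$.

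First I would treat the case where $q$ has infinite order. In characteristic zero every infinite subgroup of $k^\times$ is Zariski dense in $\mathbb{G}_m$, since the only proper closed subgroups of $\mathbb{G}_m$ are the finite groups $\mu_r$; hence $\overline{\langle q\rangle}=k^\times$. Because $\lambda$ is a homomorphism of algebraic groups, its image $\lambda(k^\times)$ is a closed subtorus, and the image of the dense set $\langle q\rangle$ is dense in it, whence $G_{\bold q}=\lambda(k^\times)$ is connected. Therefore $G_{\bold q}=G_{\bold q}^0$ and $|G_{\bold q}/G_{\bold q}^0|=1$, which is trivially coprime to $d!$, so Theorem~\ref{main3} applies.

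Next I would treat the case where $q$ has finite order $r$ coprime to $d!$. Then $\langle\bold q\rangle=\lambda(\langle q\rangle)$ is a finite subgroup of the torus, hence already Zariski closed, so $G_{\bold q}=\langle\bold q\rangle$ is finite and its identity component $G_{\bold q}^0$ is trivial. Thus $G_{\bold q}/G_{\bold q}^0\cong G_{\bold q}$ is the homomorphic image of the cyclic group $\langle q\rangle\cong\mathbb{Z}/r\mathbb{Z}$ under $\lambda$, so its order divides $r$ and is therefore coprime to $d!$; Theorem~\ref{main3} again yields the conclusion. (Equivalently, each $q_{ij}$ is a root of unity of order $r_{ij}=r/\gcd(m_{ij},r)$ dividing $r$, so by the formula $|G_{\bold q}/G_{\bold q}^0|=\text{lcm}\{r_{ij}\}_{i<j}$ recorded in the introduction this order divides $r$.)

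The only genuinely nontrivial point, and the step I would write out with care, is the identification $G_{\bold q}=\lambda(k^\times)$ in the infinite-order case: it rests on the Zariski density of an infinite subgroup of $\mathbb{G}_m$ together with the closedness and connectedness of the image of a torus under a homomorphism of algebraic groups. Everything else reduces to elementary bookkeeping on orders of cyclic groups, after which the required coprimality is immediate in both cases.
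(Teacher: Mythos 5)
Your proposal is correct and follows exactly the paper's route: the paper's entire proof of Corollary~\ref{main3a} is the one-line observation that it is a special case of Theorem~\ref{main3}, and your argument simply supplies the verification (via the cocharacter $\lambda(t)=(t^{m_{ij}})_{i<j}$ and the two cases on the order of $q$) that the hypothesis on $G_{\bold q}/G_{\bold q}^0$ holds, which the paper leaves implicit (cf.\ its Example~\ref{exaaa} and the lcm formula in the introduction). Both case analyses are sound, so this is a correct, more detailed rendering of the same proof.
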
 

\begin{proof} This is a special case of Theorem \ref{main3}.
\end{proof} 

\begin{example} The assumption in Theorem \ref{main3} and Corollary \ref{main3a}
that the order of $G_{\bold q}/G_{\bold q}^0$ is coprime to $d!$ 
cannot be removed. For instance, there exists an inner faithful action of the 8-dimensional noncommutative noncocommutative semisimple Hopf algebra on the quantum polynomial algebra $k_{-1}[x,y]$, see   \cite[Example~7.6]{KKZ}. In this case, $|G_{\bold q}/G_{\bold q}^0|=2$. 
\end{example}

\section{Finite dimensional Hopf actions on quantum polynomial algebras.} 

Let us now extend the results of the previous section to 
not necessarily semisimple Hopf algebras, under some additional assumptions. 

First of all, when the action of $H$ on $B$ is Hopf-Galois, we can 
remove in Theorem \ref{main3} the assumption that 
$H$ is semisimple, and also weaken the coprimeness assumption, 
replacing $d!$ with $d$. 

\begin{proposition} \label{HopfGal} 
Suppose that a finite dimensional Hopf algebra $H$ acts on $B:=k_{\bf q}[x_1, \dots, x_n]$, 
and the order of $G_{\bold q}/G_{\bold q}^0$ is coprime to $d$. 
If this action gives rise to an $H^*$-Hopf-Galois extension $B^H\subset B$, then
the action of $H$ on $B$ factors through a group algebra. 
\end{proposition}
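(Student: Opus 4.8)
The plan is to adapt the proof of Theorem~\ref{main3} to the Hopf-Galois setting, mirroring the relationship between Theorem~\ref{main1} and Theorem~\ref{main1a}. First I would reduce to the case where the action of $H$ on $B$ is inner faithful, as usual; one should check that passing to a quotient Hopf algebra is compatible with the Hopf-Galois hypothesis, or simply note that inner faithfulness is automatic once we aim to conclude $H$ is a group algebra. Then I would set up the same number-theoretic reduction machinery as in Theorem~\ref{main3}: choose a finitely generated subring $R\subset k$ containing the $q_{ij}$, form $B_R=R_{\bold q}[x_1,\dots,x_n]$ and a Hopf $R$-order $H_R$ acting on it, specialize via $\xi\colon R\to K$ to a number field so that $|G_{\bold q}/G_{\bold q}^0|$ is preserved (Example~\ref{exaaa}), and invoke Corollary~\ref{ntcor} to produce infinitely many primes $p$ with a reduction $\psi\circ\xi(\bold q)$ of finite order $N$ coprime to $d$. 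The reduction $B_p$ is then a PI domain with PI degree dividing $N^n$, hence coprime to $d$, and $H_p$ acts on $B_p$ inner faithfully.

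The key structural point is that the Hopf-Galois property descends to the reductions and to the center. Let $D_p$ be the quotient division algebra of $B_p$ and $Z=Z(D_p)$ its center. Following the argument in Theorem~\ref{main1a}, since $\deg D_p$ divides $N^n$ and is coprime to $d!$ (indeed coprime to $d$), \cite[Proposition~3.3(i)]{CEW1} shows that $Z$ is $H_p$-invariant. I would then argue that the Hopf-Galois extension $B^H\subset B$ reduces to a Hopf-Galois extension at the level of the center: the action of $H_p$ on $Z$ gives an $H_p^*$-Hopf-Galois extension $Z^{H_p}\subset Z$, i.e., the canonical Galois map $Z\otimes_{Z^{H_p}}Z\to Z\otimes H_p^*$ is an isomorphism. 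This forces $H_p^*$ to be commutative, hence $H_p$ cocommutative.

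From cocommutativity of $H_p$ for infinitely many primes $p$, I would conclude that $H_{R'}$ is cocommutative (as in Theorem~\ref{main3}), then that $H_R$ is cocommutative since this holds for generic $\xi$, and finally that $H$ itself is cocommutative via \cite[Lemma~2.3(ii)]{CEW1}. By the Cartier--Gabriel--Kostant theorem \cite[Corollary~5.6.4(3) and Theorem~5.6.5]{Mo}, a cocommutative finite dimensional Hopf algebra over $k$ is a group algebra, giving the conclusion.

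The main obstacle I anticipate is justifying that the Hopf-Galois property genuinely descends to $H_p$ acting on the center $Z$, rather than merely to $H_p$ acting on $B_p$. The descent from $B$ to $B_p$ is routine (the canonical map is defined over $R$ and one specializes), but restricting a Hopf-Galois structure from the full algebra to its center is not formal in general: one must use that $D_p=Z\otimes_{Z^{H_p}}(\text{something})$ behaves well and that the invariants and the Galois map interact correctly with the inclusion $Z\hookrightarrow D_p$. The cleanest route is likely to reduce the Galois condition on $B_p$ (or on $D_p=\mathrm{Frac}(B_p)$) to the corresponding condition on the $H_p$-stable subfield $Z$, invoking inner faithfulness of the $H_p$-action on $Z$ (which follows from \cite[Theorem~2.2]{SV} as in the proof of Theorem~\ref{main1}) so that no information is lost in passing to the center; this is precisely the step handled in the proof of Theorem~\ref{main1a}, and I would follow it verbatim with $d!$ replaced by $d$.
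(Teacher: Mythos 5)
Your overall skeleton --- the reduction machinery of Theorem~\ref{main3} combined with the Hopf-Galois argument of Theorem~\ref{main1a} --- is the same as the paper's, but there is a genuine gap at the central step. Under the hypothesis of Proposition~\ref{HopfGal} you may only apply Corollary~\ref{ntcor} with $r=d$, not with $r=d!$, because Corollary~\ref{ntcor} requires $r$ to be coprime to $\ell=|G_{\bold q}/G_{\bold q}^0|$ and you are only given that $d$ (not $d!$) is coprime to $\ell$. Hence all you can arrange is that $N$, and therefore $\deg D_p$, is coprime to $d$. Your parenthetical ``coprime to $d!$ (indeed coprime to $d$)'' is backwards: coprimality with $d$ does not imply coprimality with $d!$. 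This matters because \cite[Proposition~3.3(i)]{CEW1}, which you invoke to conclude that $Z$ is $H_p$-stable, has coprimality with $d!$ (equivalently, every prime factor of $\deg D_p$ exceeds $d$) as its hypothesis: its proof only has the inequality $[D_p:D_p^{H_p}]\le d$ at its disposal, and needs the stronger coprimality to force the integer $[D_p:Z\cdot D_p^{H_p}]$, which divides $(\deg D_p)^2$ and is at most $d$, to equal $1$. So with only $\gcd(\deg D_p,d)=1$ that invocation fails, and your plan to follow Theorem~\ref{main1a} ``verbatim with $d!$ replaced by $d$'' begs exactly this question --- in Theorem~\ref{main1a} the degree is $p^n$ with $p\gg 0$, so coprimality with $d!$ is automatic there, whereas here it is not available.

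The missing idea, which is the entire content of the paper's (brief) proof and the reason the proposition can be stated with $d$ in place of $d!$, is that the Hopf-Galois hypothesis upgrades the inequality to the \emph{equality} $[D_p:D_p^{H_p}]=d$. With this equality, stability of $Z$ follows from coprimality with $d$ alone: $[D_p:Z\cdot D_p^{H_p}]$ divides $[D_p:D_p^{H_p}]=d$ (dimensions multiply in the tower $D_p\supset Z\cdot D_p^{H_p}\supset D_p^{H_p}$) and also divides $(\deg D_p)^2$ (by the double centralizer theorem, since $Z\cdot D_p^{H_p}$ is a division subalgebra containing the center $Z$), hence it equals $1$. Thus $D_p=Z\cdot D_p^{H_p}$, so $Z=C_{D_p}(D_p^{H_p})$, and since $h\cdot c$ commutes with $D_p^{H_p}$ for every $c\in Z$ and $h\in H_p$, the field $Z$ is $H_p$-stable; from there the Galois-map argument you describe (commutativity of $Z\otimes_{Z^{H_p}}Z$ forcing $H_p^*$ commutative) goes through. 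Your proposal never uses the equality $[D_p:D_p^{H_p}]=d$ --- you invoke the Galois condition only \emph{after} stability of $Z$ has been secured --- so as written the argument does not close.
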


\begin{proof}
The proof is parallel to the proof of Theorem \ref{main1a}. 
The weaker coprimeness assumption suffices since 
by the Hopf-Galois condition, $[D_p: D_p^H]=d$ 
(not just $\le d$).  Here, $p \gg 0$ and $D_p$ is the full localization of $B$ reduced modulo $p$  via the method in the proof of Theorem~\ref{main3}.
\end{proof}

Let us now give a generalization of Theorem \ref{main3} to the non-semisimple case under a nondegeneracy assumption. 

\begin{theorem}\label{main4} Let $H$ be a finite dimensional Hopf algebra of dimension $d$ acting on $B:=k_{\bold q}[x_1,\dots,x_n]$. Assume that the order of $G_{\bold q}/G_{\bold q}^0$ is coprime to $d!$, and $\bold q$ is nondegenerate. Then, the action of $H$ on $B$ factors through a group action. 
\end{theorem}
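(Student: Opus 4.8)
The plan is to combine the strategy of Theorem~\ref{main3} with the nondegeneracy machinery of Section~3, adapting the latter from the filtered setting to the quantum polynomial algebra $B = k_{\bold q}[x_1,\dots,x_n]$. First I would reduce to the case where the $H$-action is inner faithful, and set up the reduction data exactly as in the proof of Theorem~\ref{main3}: choose a finitely generated $R\subset k$ containing the $q_{ij}$ together with a Hopf $R$-order $H_R$ acting on $B_R := R_{\bold q}[x_1,\dots,x_n]$, pass through a number field $K$ via $\xi\colon R\to K$ chosen generically so that $|G_{\bold q}/G_{\bold q}^0| = |G_{\xi(\bold q)}/G_{\xi(\bold q)}^0|$, and then invoke Corollary~\ref{ntcor} to produce infinitely many primes $p$ (with prime ideals $\mathfrak p$) such that the order $N$ of the reduced parameters $\psi\circ\xi(\bold q)$ is finite and coprime to $d!$. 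This yields a reduction $B_p$ which is a PI domain whose PI degree divides $N^n$, with $H_p$ acting inner faithfully on $B_p$ and on its quotient division algebra $D_p$.

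The new ingredient, replacing the semisimplicity used in Theorem~\ref{main3}, is to verify the nondegeneracy condition $\bigcap_{m\ge 1} Z(m) = \overline{\Bbb F}_p$ of Definition~\ref{nondege}, so that Theorem~\ref{anyact1} (via Theorem~\ref{th3.2gen}) can be applied. The point where $B$ being a \emph{nondegenerate bicharacter} enters is precisely in identifying the center of $B_p$: over $\overline{\Bbb F}_p$, the element $x_1^{a_1}\cdots x_n^{a_n}$ is central in $B_p$ if and only if $\bold q(a,?)$ is trivial as a character valued in the reduced parameters. I would therefore compute, for the reduction $B_p$ with all $\psi\circ\xi(q_{ij})$ of order dividing $N$, that the center $C$ is spanned by the monomials $x^a$ with $a$ in the radical of the mod-$N$ reduction of the bicharacter $\bold q$, so that $Z = \mathrm{Frac}(C)$ is generated by such monomials. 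The nondegeneracy of $\bold q$ over $\ZZ$ forces this radical lattice to be as small as possible, and in particular to be commensurable with $(N\ZZ)^n$; combined with the argument that $a\in D_{p^m}$ central modulo $p^{m-1}$ implies $a^p$ central (so that $Z(m)\supset Z^{p^{m-1}}$, exactly as in the proof of Theorem~\ref{anyact1}), this gives $[Z:Z(m)]$ a finite power of $p$ and $\bigcap_m Z(m) = \overline{\Bbb F}_p$.

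Granting the nondegeneracy verification, the proof concludes as in Theorem~\ref{anyact1}: since the PI degree of $D_p$ divides $N^n$ and is hence coprime to $d!$, Proposition~\ref{main1b}-style reasoning shows $Z$ is $H_p$-stable, and Theorem~\ref{th3.2gen} applies to the tower $Z(m)\subset Z$ (using $p > \dim H$, guaranteed for $p\gg 0$), yielding that $H_p$ is a group algebra, hence cocommutative. As this holds for infinitely many $p$, for generic $\psi$, and then for generic $\xi$, descent through $H_{R'}$ and $H_R$ gives that $H$ itself is cocommutative, i.e.\ the action factors through a group algebra.

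The main obstacle I anticipate is the precise bicharacter-theoretic computation of the center and the intersection $\bigcap_m Z(m)$ in the \emph{quantum torus / prime-power} setting. In the filtered case of Theorem~\ref{nondegen} the center was controlled by the symplectic geometry of $\mathrm{gr}(B)$ and the Cartier-isomorphism input of Lemma~\ref{commalg}; here there is no such associated graded symplectic variety, so I must instead show directly that the centralizer lattices of the mod-$p^m$ reductions shrink to $(N\ZZ)^n$ (up to the $p$-power inflation coming from the $a\mapsto a^p$ centrality argument) as a consequence of $\bold q$ being a nondegenerate bicharacter over $\ZZ$. Reconciling the two sources of centrality --- the torsion $x_i^N$ coming from the finite order $N$ of the reduced parameters, and the characteristic-$p$ Frobenius centrality $x_i^{p^{m-1}}$ --- so that the resulting field degrees $[Z:Z(m)]$ are genuinely $p$-powers and the intersection collapses to the ground field, is the step requiring the most care.
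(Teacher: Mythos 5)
Your overall route is the same as the paper's: the reduction setup of Theorem~\ref{main3} (generic $\xi$, Corollary~\ref{ntcor}, order $N$ coprime to $d!$), the tower of fields $Z(m)$ coming from reduction modulo $p^m$, an application of Theorem~\ref{th3.2gen} to conclude that $H_p$ is cocommutative, and descent back to $H$. However, there is a genuine gap at exactly the step you flag as the main obstacle, and the mechanism you sketch in your second paragraph would not close it. The statement $\bigcap_{m\ge 1}Z(m)=\overline{\Bbb F}_p$ is an \emph{upper} bound on the fields $Z(m)$, while both ingredients you invoke give only lower bounds or vacuous information: the Frobenius-centrality argument shows $Z(m)\supset Z^{p^{m-1}}$ (a lower bound, useful only for the $p$-power degree claim), and commensurability of the radical lattice with $(N\ZZ)^n$ is automatic for any lattice squeezed between $(N\ZZ)^n$ and $\ZZ^n$, so it carries no trace of the nondegeneracy hypothesis at all.

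The device the paper uses instead is an exact description of $Z(m)$: it is generated by the monomials $x_1^{a_1}\cdots x_n^{a_n}$ such that $\prod_j q_{ij}^{a_j}=1$ holds in the truncated Witt ring $W_{m,p}$, where the $q_{ij}$ now denote the reductions modulo $p^m$ (cf.\ \cite[Subsection~2.1]{CEW2}). This single description yields both hypotheses of Theorem~\ref{th3.2gen}. First, every element of $\ker\bigl(W_{m,p}^\times\to\overline{\Bbb F}_p^\times\bigr)$ has order a power of $p$, so the exponent lattice for $Z(m)$ has $p$-power index in the one for $Z$, giving $[Z:Z(m)]$ a power of $p$ (your argument via $Z(m)\supset Z^{p^{m-1}}$ also suffices here). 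Second --- and this is where nondegeneracy genuinely enters --- if a nonconstant monomial $x^a$ lies in $Z(m)$ for \emph{every} $m$, then $\prod_j q_{ij}^{a_j}=1$ holds in every $W_{m,p}$, hence in the characteristic-zero ring $\varprojlim_m W_{m,p}=W(\overline{\Bbb F}_p)$, into which $R'$ embeds for all but finitely many $\mathfrak{p}$; thus the multiplicative relation already holds among the $\xi(q_{ij})$ in $K$, hence by genericity of $\xi$ among the $q_{ij}$ in $k$, and nondegeneracy forces $a=0$. The ``two sources of centrality'' you worry about reconcile automatically, because $W_{m,p}^\times$ splits (via the Teichm\"uller lift) as $\overline{\Bbb F}_p^\times$ times a $p$-group, separating the order-$N$ torsion from the $p$-power part. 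Finally, two citation repairs: Theorem~\ref{th3.2gen} requires that $H_p$ preserve each $Z(m)$, which the paper obtains from a straightforward generalization of \cite[Lemma~4.6]{CEW2}, and the $H_p$-stability of $Z$ itself comes from \cite[Proposition~3.3(i)]{CEW1} (degree of $D_p$ coprime to $d!$), not from Proposition~\ref{main1b}, which concerns passage to ${\rm Frac}(B)$.
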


\begin{proof}  
The proof is obtained by combining the proofs of Theorem \ref{anyact1} and Theorem \ref{main3}. 
Let us describe the necessary changes. 

We argue as in the proof of Theorem \ref{main3}. Fix a generic character 
$\xi: R\to K$ from $R$ to a number field $K$, and set $R'=\xi(R)$.
By Corollary \ref{ntcor}, there exist infinitely many primes $p$ with prime 
ideals ${\mathfrak{p}}\subset R'$ lying over them such that, for a generic homomorphism
\linebreak $\psi: R'\to \overline{\Bbb F}_p$
annihilating ${\mathfrak{p}}$, the order 
$N:=N_{\mathfrak{p}}$ of $\psi\circ \xi(\bold q)$ is finite and 
coprime to $d!$. 

Consider the image $Z(m)$ of the center $Z_m$ of $D_{p^m}$ in $D_p$ (thus, $Z(1)=Z$). By a straightforward generalization of 
\cite[Lemma~4.6]{CEW2}, $Z(m)$ is preserved by the action of $H_p$. 
It is clear that $Z(m)$ is generated by the monomials
$x_1^{m_1}\cdots x_n^{m_n}$ such that $\prod_j q_{ij}^{m_j}=1$ in the truncated ring of Witt 
vectors $W_{m,p}$ (see \cite[Subsection~2.1]{CEW2}). 
Let $W_{m,p}'$ be the kernel of the natural map of multiplicative groups 
$W_{m,p}^\times\to \overline{\Bbb F}_p^\times$. 
Then, every element of $W_{m,p}'$ has order a power of 
$p$. Hence, $[Z:Z(m)]$ is a power of $p$. 
Also it is clear from the nondegeneracy condition for $\bold q$  
that $\bigcap_m Z(m)=\overline{\Bbb F}_p$. Thus, Theorem \ref{th3.2gen} 
applies, and yields that $H_p$ is cocommutative. 
Hence $H_{R'}$ is cocommutative, implying that $H_R$ is cocommutative
and ultimately that $H$ is cocommutative, i.e., a group algebra. 
\end{proof} 

\begin{remark} \label{rem:nondeg}
If $q_{ij}=q^{m_{ij}}$ where $q$ is not a root of unity, then $\bold q$ is nondegenerate if and only if $\det(m_{ij})\ne 0$. Theorem \ref{main4} applies in this case.  
This gives a generalization of \cite[Theorem~0.4]{CWZ} to non-homogeneous 
Hopf actions for even $n$.
\end{remark}

\begin{proposition}\label{quo}
Theorem \ref{main3}, Corollary \ref{main3a} and Theorem \ref{main4} remain valid if the quantum polynomial algebra $B$ is replaced by the quantum torus $k_{\bf q}[x_1^{\pm 1}, \dots x_n^{\pm 1}]$ or the division algebra of quotients ${\rm Frac}(B)$.
\end{proposition}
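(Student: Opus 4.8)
The plan is to observe that both replacements share with $B$ exactly the reduction-theoretic data on which the proofs of Theorems~\ref{main3} and~\ref{main4} actually depend. Recall that those proofs fix a finitely generated $R\subset k$, choose (via Corollary~\ref{ntcor}) a generic specialization $\psi\circ\xi$ carrying $\bold q$ to an element of order $N$ coprime to $d!$, reduce modulo $p$, and then pass to the quotient division algebra $D_p$ of $B_p$; the final conclusion that $H_p$ is cocommutative uses only that $H_p$ acts inner faithfully on $D_p$, that the PI degree of $D_p$ divides $N^n$ (hence is coprime to $d!$), and, in the nondegenerate case, the structure of the subfields $Z(m)\subset D_p$. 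I would therefore show that replacing $B$ by the quantum torus or by ${\rm Frac}(B)$ leaves all of these unchanged. Since Corollary~\ref{main3a} is a special case of Theorem~\ref{main3}, it needs no separate treatment.

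For the quantum torus $T:=k_{\bold q}[x_1^{\pm1},\dots,x_n^{\pm1}]$, I would use that $T$ is the Ore localization of $B$ at the multiplicative set generated by the normal elements $x_1,\dots,x_n$. Taking the $R$-order $T_R:=R_{\bold q}[x_1^{\pm1},\dots,x_n^{\pm1}]$, its reduction $T_p$ is the localization of $B_p$ at the images of the $x_i$; as $B_p$ is a domain, $T_p$ is a domain with ${\rm Frac}(T_p)={\rm Frac}(B_p)=D_p$. The elements $x_i^N$ remain central in $T_p$, so the PI degree of $T_p$ again divides $N^n$. Inner faithfulness of $H_p$ on $T_p$ follows from the version of \cite[Proposition~2.4]{CEW1} adapted to $T$ (with the same proof), and hence, by \cite[Theorem~2.2]{SV}, $H_p$ acts inner faithfully on $D_p$. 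With $D_p$, its PI degree, and the fields $Z(m)$ all identical to the polynomial-algebra case, the proofs of Theorems~\ref{main3} and~\ref{main4} carry over verbatim.

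For the division algebra ${\rm Frac}(B)$, the argument is parallel to that of Proposition~\ref{main1b}: I would combine the proofs of Theorems~\ref{main3} and~\ref{main4} with the proof of \cite[Proposition~4.4]{CEW1}, which constructs an $R$-order inside a division algebra of fractions for a given Hopf action and reduces it modulo $p$, the precise choice of generators being irrelevant. The reduction again has full ring of fractions $D_p$, carrying an inner-faithful $H_p$-action, and the conclusion follows as before. The one genuinely nontrivial point — already handled in \cite[Proposition~4.4]{CEW1} — is building such an order and performing reduction modulo $p$ for an action on ${\rm Frac}(B)$, which carries neither a natural filtration nor finite generation as a $k$-algebra; I expect this to be the main obstacle. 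Once it is in place, the coprimality of $N$ to $d!$ (and, in Theorem~\ref{main4}, the nondegeneracy of $\bold q$ controlling $[Z:Z(m)]$) governs the PI degree exactly as in the original proofs, so $H$ is cocommutative and thus a group algebra.
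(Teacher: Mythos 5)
Your proposal is correct and follows essentially the same route as the paper: the paper's proof simply runs the argument of Theorem~\ref{main3} (and~\ref{main4}) analogously for the quantum torus, and handles ${\rm Frac}(B)$ by the same argument as \cite[Proposition~4.4]{CEW1}, which is exactly what you do. Your extra observation that $T_p$ is a localization of $B_p$ sharing the same division ring $D_p$, and your identification of the order-construction for ${\rm Frac}(B)$ as the only nontrivial point, are just fleshed-out versions of the paper's two-sentence proof.
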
 

\begin{proof} In the case of the quantum torus, the proof is analogous to the proof of 
Theorem \ref{main3}. The case of the division algebra of quotients is obtained
using the same argument as in the proof of \cite[Proposition~4.4]{CEW1}. 
\end{proof} 

\begin{example}\label{Swee} The condition that $H$ is semisimple cannot be dropped in Theorem \ref{main3}, and the condition that $\bold q$ is nondegenerate cannot be dropped in Theorem \ref{main4}.  

Namely, let $A=A_0\oplus A_1$ be a $\Bbb Z/2\Bbb Z$-graded domain 
with a nonzero central element $z\in A_1$, and take $H$ to be the $4$-dimensional Sweedler Hopf algebra generated by a group-like element $g$ and a $(g,1)$-skew-primitive element $u$ with $g^2=1$, $u^2=0$ and $gu+ug =0$.

\begin{enumerate}
\item Then, there is an action of $H$ on $A$ (not preserving the grading of $A$) given by $g\cdot a=(-1)^{{\rm deg}a}a$, and 
$u\cdot a=0$ if $a\in A_0$ and $u\cdot a=za$ if $a\in A_1$. It is easy to check that this action is well-defined; it is inner faithful 
since $u$ acts by a nonzero operator. 

\smallskip

\item In particular, we have an inner faithful action of $H$ 
on the quantum polynomial algebra $k_q[x,y]$, for $q$ a root of unity of 
any odd order $2m-1$, $m>0$; namely, we can take $z=x^{2m-1}$. 
\smallskip

\item Also, this gives an inner faithful action of $H$ on 
the quantum torus $k_q[x_1^{\pm1},\dots,x_n^{\pm 1}]$
if $n$ is odd: we can take the central element 
$$
z=x_1x_2^{-1}x_3\cdots x_{n-1}^{-1}x_n.
$$ 
For even $n$, such an action is impossible if $q$ is not a root of unity 
by Theorem \ref{main4}. Indeed, the matrix $m_{ij}:={\rm sign}(j-i)$ is 
nondegenerate if and only if $n$ is even (see Remark~\ref{rem:nondeg}).
\smallskip

\item Finally, this gives an inner faithful Sweedler Hopf algebra action 
on the Weyl algebra $\bold A_n(F)$ when ${\rm char}(F)=p\ge 3$;
the $\Bbb Z/2\Bbb Z$ grading is defined by giving the generators degree $1$, and 
we can take, for instance, $z=x_1^p$.
(Note that by  \cite[Theorem~1.1]{CEW2}, this is impossible in characteristic zero; indeed, the center of $\bold A_n(k)$ is $k$.)
\end{enumerate}
\end{example}

\section{Semisimple Hopf actions on twisted homogeneous coordinate rings and 3-dimensional Sklyanin algebras} \label{sec:Skly}

Now let us consider semisimple Hopf actions on twisted homogeneous coordinate rings of
abelian varieties. 
We keep the notation of Subsection \ref{ellip}. 

Let $H$ be a Hopf algebra over $k$ of dimension $d$. 

\begin{theorem} \label{main5} We have the following statements. 
\begin{enumerate}
\item[(i)] If $H$ is semisimple, and if the order of $G_\sigma/G_\sigma^0$ 
is coprime to $d!$, then any $H$-action on $B:=B(X,\sigma,{\mathcal{L}})$ 
factors through a group action.  
\item[(ii)]  Moreover, part \textnormal{(i)}   holds for the $H$-action on the division algebra of quotients ${\rm Frac}(B)$ of $B$. 
\item[(iii)] Part \textnormal{(i)} also holds for not necessarily semisimple $H$ 
if the order of $G_\sigma/G_\sigma^0$ is coprime to $d$ and the $H$-action gives rise to a Hopf-Galois extension. 
\end{enumerate}
\end{theorem}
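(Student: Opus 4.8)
The plan is to reduce Theorem~\ref{main5} to the number-theoretic input of Corollary~\ref{ntcor} together with the structural results already proved for semisimple and Hopf-Galois actions, following the template of the proofs of Theorem~\ref{main3}, Proposition~\ref{HopfGal}, and Proposition~\ref{quo}. First I would set up the reduction machinery: assuming the $H$-action on $B = B(X,\sigma,\mathcal{L})$ is inner faithful (otherwise pass to a quotient Hopf algebra), I choose a finitely generated subring $R \subset k$ over which $X$, $\mathcal{L}$, the translation point $s$, and the Hopf order $H_R$ with its action on an $R$-order $B_R$ are all defined, exactly as in Lemma~\ref{lem:filtered} adapted to this graded setting. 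The essential geometric point is that the PI degree of $B(X,\sigma,\mathcal{L})$ is governed by the order of $\sigma$, i.e. by the order of the point $s$ in the abelian variety $X$, since $B$ is a PI domain of PI degree $|\sigma|$ exactly when $|\sigma| < \infty$.

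Next I would invoke Perucca's theorem in the form of Corollary~\ref{ntcor}. The role played in Theorem~\ref{main3} by the multiplicative order of $\psi\circ\xi(\mathbf{q})$ in $\overline{\Bbb F}_p^\times$ is now played by the order of the reduction $\bar{s}_{\mathfrak p}$ of the point $s$ in the reduced abelian variety $X_{\mathfrak p}$ over $\overline{\Bbb F}_p$. After fixing a generic character $\xi: R \to K$ into a number field so that $|G_\sigma/G_\sigma^0|$ is preserved under specialization (via the analogue of Example~\ref{exaaa} for the Zariski closure of $\{s^i\}_{i\in\mathbb Z}$), Corollary~\ref{ntcor} produces infinitely many primes $p$ and prime ideals $\mathfrak p \subset R'$ such that for a generic reduction $\psi: R' \to \overline{\Bbb F}_p$ annihilating $\mathfrak p$, the order $N := |\bar{s}_{\mathfrak p}|$ is finite and coprime to $d!$ (respectively $d$, in the Hopf-Galois case). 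Then $B_p := B_R \otimes_R \overline{\Bbb F}_p$ is a PI domain whose PI degree equals $N$ and is therefore coprime to $d!$; the reduction $H_p$ is semisimple and cosemisimple for $p \gg 0$ by \cite[Lemma~2.5]{CEW1}, and acts inner faithfully on $B_p$, and hence on its quotient division algebra $D_p$, by the versions of \cite[Proposition~2.4]{CEW1} and \cite[Theorem~2.2]{SV} that apply verbatim here.

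For part~(i) I would then finish exactly as in Theorem~\ref{main3}: since $\deg D_p = N$ is coprime to $d!$ and $H_p$ acts inner faithfully on the central division algebra $D_p$, \cite[Proposition~3.3(ii)]{CEW1} forces $H_p$ to be cocommutative. As this holds for infinitely many $p$ and for generic $\xi$, descent gives that $H_R$, and therefore $H$, is cocommutative, hence a group algebra by Cartier--Gabriel--Kostant. Part~(ii) follows by replacing $B$ with $\mathrm{Frac}(B)$ throughout, using the argument of \cite[Proposition~4.4]{CEW1} as invoked in Proposition~\ref{quo}; the exact choice of generators is immaterial to that argument. Part~(iii) is parallel to Proposition~\ref{HopfGal}: the Hopf-Galois hypothesis upgrades the inequality to the equality $[D_p : D_p^{H_p}] = d$, so that coprimeness of $N$ to $d$ (rather than to $d!$) already suffices to apply \cite[Proposition~3.3(i)]{CEW1}, yielding that $Z$ is $H_p$-invariant and $H_p^*$ is commutative, whence $H$ is cocommutative and a group algebra.

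I expect the main obstacle to be the correct deployment of Corollary~\ref{ntcor}: one must verify that Perucca's theorem, which in Section~4 controls the multiplicative order of a specialized tuple of torus points, applies equally to control the order of a reduced point on an abelian variety, and that the invariant $|G_\sigma/G_\sigma^0|$ behaves well under generic specialization in precisely the way $|G_{\mathbf q}/G_{\mathbf q}^0|$ does. This is exactly the point at which the abelian-variety geometry enters, and confirming that the Appendix's number-theoretic statement is phrased at the level of generality needed for $\{s^i\}_{i\in\mathbb Z} \subset X$ is where the real content of the theorem, beyond the formal parallel with Theorem~\ref{main3}, resides.
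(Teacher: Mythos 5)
Your proposal is correct and follows essentially the same route as the paper: the paper's proof simply states that parts (i), (ii), (iii) are parallel to Theorem~\ref{main3}, Proposition~\ref{quo}, and Proposition~\ref{HopfGal} respectively, using that the PI degree of $B(X,\sigma,\mathcal{L})$ equals the order of $\sigma$, with the only change being that Corollary~\ref{ntcor} is applied to the abelian variety $X$ with the subgroup $\{s^i\}_{i\in\mathbb{Z}}$ instead of the torus with $\langle \mathbf{q}\rangle$. The concern you flag at the end is not an obstacle, since Theorem~\ref{ntresult} (Perucca) is already stated for products of abelian varieties and tori, so Corollary~\ref{ntcor} applies verbatim to $X$.
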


\begin{proof}
The proofs of the statements \textnormal{(i),(ii),(iii)}  are parallel to the proofs of Theorem~\ref{main3}, Proposition~\ref{quo}, and Proposition~\ref{HopfGal}, respectively, where we use that the PI degree of $B$ equals the order of $\sigma$. The only difference is that Corollary \ref{ntcor} is applied to the abelian variety $X$ with subgroup $\{s^i\}_{i \in \mathbb{Z}}$ rather than the torus $(k^\times)^{n(n-1)/2}$ with subgroup $\langle {\bf q} \rangle$. 
\end{proof} 

In particular, if $X=:E$ is an elliptic curve, Theorem \ref{main5} holds if the order of $\sigma$ is coprime to $d!$ or infinite. Moreover, if $\sigma$ has infinite order, the assumption that $H$ 
is semisimple can be dropped. 

\begin{theorem}\label{ellcurve} Let $E$  be an elliptic curve, and take $\sigma \in \Aut(E)$ given by translation by a point
of infinite order. Then, any finite dimensional Hopf action on $B(E,\sigma,{\mathcal {L}})$ factors through a group action. 
\end{theorem}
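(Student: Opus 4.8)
The goal is to remove the semisimplicity hypothesis from Theorem~\ref{main5}(i) in the special case where $X=E$ is an elliptic curve and $\sigma$ is translation by a point $s$ of infinite order. The strategy is to run the proof of Theorem~\ref{main4} (the non-semisimple, nondegenerate case for quantum polynomial algebras) adapted to the twisted homogeneous coordinate ring, using Theorem~\ref{th3.2gen} as the key engine. First I would fix a finitely generated subring $R\subset k$ over which $E$, $\mathcal{L}$, $s$, the Hopf algebra $H$ (via a Hopf order $H_R$), and the inner faithful action are all defined, so that $B_R:=B(E_R,\sigma_R,\mathcal{L}_R)$ carries an $H_R$-action. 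Because $s$ has infinite order, the group $\{s^i\}_{i\in\ZZ}$ is Zariski-dense in $E$, so $G_\sigma=E$ and $G_\sigma/G_\sigma^0$ is trivial; thus the coprimeness hypothesis is automatic. As in the proofs of Theorems~\ref{main3} and~\ref{main5}, I would invoke Corollary~\ref{ntcor} (Perucca's result applied to the abelian variety $E$ with the cyclic subgroup generated by $s$) to produce infinitely many primes $p$ together with characters $\psi:R'\to\overline{\Bbb F}_p$ for which the reduction $\sigma_p$ has \emph{finite} order $N=N_{\mathfrak p}$ coprime to $d!$, and for which $B_p:=B_R\otimes_R\overline{\Bbb F}_p$ is a PI domain whose PI degree equals the order of $\sigma_p$, hence divides $N$.

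\textbf{Key steps.} With $D_p={\rm Frac}(B_p)$ a central division algebra of PI degree coprime to $d!$, the semisimple argument of Theorem~\ref{main5}(i) would already give cocommutativity---but here $H_p$ need not be semisimple, so I must instead control the tower of centers under reduction modulo $p^m$. Following Theorem~\ref{main4}, I would form $B_{p^m}$ (defined over the truncated Witt ring $W_{m,p}$), let $Z_m$ be the center of $D_{p^m}$ and $Z(m)$ its image in $D_p$, and verify the two hypotheses of Theorem~\ref{th3.2gen}: that $[Z:Z(m)]$ is a power of $p$, and that $\bigcap_{m\ge1}Z(m)=\overline{\Bbb F}_p$. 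For the first, the mod-$p^m$ center is generated by monomials built from lifts of $\sigma_p^N=1$, and the obstruction to such a relation lifting from $W_{1,p}$ to $W_{m,p}$ lives in the kernel $W_{m,p}'$ of $W_{m,p}^\times\to\overline{\Bbb F}_p^\times$, every element of which has $p$-power order; this forces $[Z:Z(m)]$ to be a power of $p$, exactly as in Theorem~\ref{main4}. For the intersection being trivial, I would argue that $Z$ is the function field of $E/\langle\sigma_p\rangle$ (a quotient by the finite group of order $N$), and that the nested subfields $Z(m)$ cut out successively by these $p$-power obstructions intersect in the constants $\overline{\Bbb F}_p$, which plays the role here of the nondegeneracy of $\bold q$.

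\textbf{Conclusion and main obstacle.} Once both hypotheses of Theorem~\ref{th3.2gen} hold (using $p\gg0$ so that $p>\dim H_p$) and $H_p$ preserves each $Z(m)$ (via the straightforward generalization of \cite[Lemma~4.6]{CEW2}, since $Z(m)$ is intrinsically the image of the center of $D_{p^m}$), I conclude that $H_p$ is a group algebra, in particular cocommutative. Since this holds for infinitely many $p$ and for generic $\xi$, descent gives that $H_R$, and therefore $H$, is cocommutative, hence a group algebra by the Cartier--Gabriel--Kostant theorem. The main obstacle I anticipate is the second hypothesis: establishing $\bigcap_{m\ge1}Z(m)=\overline{\Bbb F}_p$. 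Unlike the quantum polynomial case, where nondegeneracy of $\bold q$ makes this transparent via an explicit monomial computation, here the center $Z$ is the function field of the quotient curve and I must show that the only elements surviving in every $Z(m)$ are constants. I would handle this by passing to the associated graded / Poisson picture as in Theorem~\ref{nondegen}---since $E$ is generically symplectic (it is smooth projective of dimension one, symplectic on the dense open where $\mathcal{L}$ trivializes the relevant structure)---and applying Lemma~\ref{commalg} to force the leading Poisson-central terms into $Z_0^{p^m}$, whose intersection over $m$ is the perfect field $\overline{\Bbb F}_p$; a leading-term/degree argument identical to the end of the proof of Theorem~\ref{nondegen} then rules out any non-constant $z$.
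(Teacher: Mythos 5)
Your skeleton---an order $B_R$ with a Hopf order $H_R$, Corollary~\ref{ntcor} applied to $E$ (where $G_\sigma=E$ is connected precisely because $s$ has infinite order), reduction modulo $p^m$, the tower $Z(m)$ preserved by $H_p$ via the generalization of \cite[Lemma~4.6]{CEW2}, Theorem~\ref{th3.2gen}, and descent---is exactly the combination of Theorems~\ref{main4} and~\ref{anyact1} that the paper's proof invokes. The gap is in the step you yourself flag as the main obstacle: proving $\bigcap_{m\ge1}Z(m)=\overline{\Bbb F}_p$. Your proposed resolution---treat $E$ as a ``generically symplectic'' Poisson variety and run Theorem~\ref{nondegen} together with Lemma~\ref{commalg}---cannot work. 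A symplectic variety is even-dimensional, so a curve is never generically symplectic; the parenthetical about ``the dense open where $\mathcal{L}$ trivializes the relevant structure'' has no mathematical content. More fundamentally, Theorem~\ref{nondegen} lives in the filtered-quantization framework, where ${\rm gr}(B)=O(X)$ is a \emph{commutative} finitely generated domain carrying a Poisson bracket; the graded algebra $B(E,\sigma,\mathcal{L})$ with $\sigma\neq 1$ is not a filtered deformation of a commutative algebra, so there is no Poisson variety $X$, no Poisson center $C_{0m}$, and no smooth affine $Y$ with a flat $W_{m,p}$-deformation to which Lemma~\ref{commalg} could be applied. A symptom that the step is genuinely missing: your intersection argument never visibly uses the infinite order of $s$, yet without that hypothesis the theorem is \emph{false}---by the Remark following Theorem~\ref{mainSkly}, the Sweedler algebra acts inner faithfully on $B(X,\sigma,\mathcal{L})$ when $\sigma$ has odd finite order $N$, and in that case (for $p\nmid N$) the element $t^N$ lies in every $Z(m)$, so $\bigcap_m Z(m)\supsetneq\overline{\Bbb F}_p$.

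The correct verification is the direct analogue of the monomial computation in Theorem~\ref{main4}, not of Theorem~\ref{nondegen}. The graded quotient ring of $B_p$ is $K[t^{\pm1};\sigma_p]$ with $K=\overline{\Bbb F}_p(E_p)$, so $Z=K^{\sigma_p}(t^N)$; similarly $Z_m$ is generated by $\sigma_{p^m}$-invariant functions together with $t^{\pm N_m}$, where $N_m$ is the order of $s$ modulo $p^m$. The relevant kernel is not $W_{m,p}^\times\to\overline{\Bbb F}_p^\times$ (that is the torus computation you transplanted from Theorem~\ref{main4}), but the kernel of reduction $E(W_{m,p})\to E(\overline{\Bbb F}_p)$, i.e.\ the formal group of $E$; it is again a $p$-group, so $N_m=Np^{j_m}$ and $[Z:Z(m)]$ is a power of $p$ (alternatively, use the argument from Theorem~\ref{anyact1}: if $a$ is central modulo $p^{m-1}$ then $a^p$ is central). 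The infinite-order hypothesis enters exactly here: it forces $j_m\to\infty$. Consequently any element of $\bigcap_m Z(m)$ must be constant in $t$ (the allowed exponents $N_m\Bbb Z$ have trivial intersection), and its function part lies in the function field of $E_p$ modulo a subgroup scheme of order $Np^{j_m}$ whose connected part has order $p^{j_m}$; in dimension one this field is contained in $K^{\sigma_p}\cap K^{p^{j_m}}$, and $\bigcap_j K^{p^j}=\overline{\Bbb F}_p$ since $K$ is finitely generated. With this step in place of your Poisson argument, the rest of your proposal goes through and agrees with the paper.
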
 

\begin{proof}
The proof repeats the proofs of Theorem \ref{main4} 
and Theorem \ref{anyact1} without significant changes. 
\end{proof} 

Finally, let us consider semisimple Hopf actions on three-dimensional Sklyanin algebras \cite{ATV, FO}. 
Let $F$ be an algebraically closed field of characteristic not equal to $2$ or $3$.

\begin{definition} \label{def:Skly3thcr} Let $a,b,c\in F^\times$ be such that 
$$
(3abc)^3\ne (a^3+b^3+c^3)^3.
$$
The {\it three-dimensional Sklyanin algebra}, denoted by $S(a,b,c)$ is generated over $F$ by $x$, $y$, $z$ with defining relations
\begin{equation*} \label{eq:S(abc)}
\begin{array}{c}ayz+ bzy+ cx^2 ~=~ azx+ bxz+cy^2 ~=~ axy+ byx+ cz^2=0. \end{array}
\end{equation*}
\end{definition} 
It is known  that $S(a,b,c)$ is Koszul with Hilbert series 
$(1-t)^{-3}$ (see \cite[Theorems~6.6(ii) and~6.8(i)]{ATV} and a result of J. Zhang \cite[Theorem~5.11]{Smi}), so that $S(a,b,c)$ is a flat deformation of the algebra of polynomials in three variables (see, e.g. \cite[Theorem~1.1]{TV}). Moreover, the center of $S(a,b,c)$ 
contains an element $T$ of degree $3$, and $S(a,b,c)/(T)=B(E,\sigma,{\mathcal{L}})$, where $E$ is the elliptic curve in $\Bbb P^2$ given by the equation
$$
(a^3+b^3+c^3)xyz=abc(x^3+y^3+z^3)
$$
with $\sigma$ given by translation by the point $(a:b:c)\in E$,
and ${\mathcal{L}}$ is a line bundle of degree $3$ on $E$.

\begin{theorem} \label{mainSkly} Let $S(a,b,c)$ be a 3-dimensional Sklyanin algebra over $k$ and let $H$ be a semisimple Hopf algebra over $k$ of dimension $d$. If  the order of $\sigma \in \Aut(E)$ is coprime to $d!$ or infinite,  
 then any $H$-action on $S(a,b,c)$ factors through a group action.
\end{theorem}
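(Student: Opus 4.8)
The plan is to reduce the statement about the Sklyanin algebra $S(a,b,c)$ to the already-established Theorem~\ref{main5}(i) for twisted homogeneous coordinate rings of abelian varieties (here the elliptic curve $E$). The key structural fact, recorded just before the statement, is that the center of $S(a,b,c)$ contains a degree-$3$ element $T$ with $S(a,b,c)/(T)\cong B(E,\sigma,\mathcal L)$, where $\sigma$ is translation by $(a:b:c)\in E$. So the strategy is to show that any inner faithful semisimple Hopf action of $H$ on $S:=S(a,b,c)$ necessarily descends to the quotient $B(E,\sigma,\mathcal L)$, and then to invoke Theorem~\ref{main5}(i) together with the hypothesis that $|\sigma|$ is coprime to $d!$ (or infinite) to conclude that $H$ is a group algebra.

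First I would reduce to the inner faithful case as usual: if the action is not inner faithful, pass to the quotient Hopf algebra through which it factors, which is still semisimple of dimension dividing $d$, so the coprimeness hypothesis is preserved. Next, the crucial step is to verify that the two-sided ideal $(T)$ is $H$-stable, so that $H$ acts on $S/(T)=B(E,\sigma,\mathcal L)$. For this I would argue that $T$, being a central element of the specified degree, is essentially canonical: the degree-$3$ part of the center of $S$ is one-dimensional (the Sklyanin algebra is a flat deformation of the polynomial ring in three variables with Hilbert series $(1-t)^{-3}$, and its center is the polynomial ring $k[T]$ in the generic case), so $H\cdot T\subseteq kT$ up to the action of grouplikes; more carefully, one uses that the space spanned by the central degree-$3$ elements is $H$-stable because the Hopf action preserves both the center and, after reduction, the grading-related filtration. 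Once $(T)$ is shown $H$-stable, the induced action on $B(E,\sigma,\mathcal L)$ is inner faithful for the quotient Hopf algebra, and Theorem~\ref{main5}(i) finishes the argument, since the PI degree data is governed exactly by the order of $\sigma$.

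An alternative route, which may be cleaner and which the techniques of the paper suggest, is to mimic the proof of Theorem~\ref{main3}/Theorem~\ref{main5} directly: choose a finitely generated subring $R\subset k$ containing $a,b,c$ and a Hopf $R$-order $H_R$ acting on $S_R:=S_R(a,b,c)$, reduce modulo a large prime $p$ to get an inner faithful action of the semisimple cosemisimple Hopf algebra $H_p$ on the PI domain $S_p$, and control the PI degree of its quotient division algebra $D_p$. The point is that the PI degree of $S(a,b,c)$, and hence of its reductions, is controlled by the order of $\sigma$ (when $|\sigma|<\infty$ the algebra is PI of PI degree $|\sigma|$, matching the $B(E,\sigma,\mathcal L)$ situation), and when $\sigma$ has infinite order one applies the number-theoretic Corollary~\ref{ntcor} to the elliptic curve $E$ with the subgroup $\{s^i\}$ generated by the translation point to arrange that the reduction $\sigma_p$ has order coprime to $d!$. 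Then \cite[Proposition~3.3(ii)]{CEW1} forces $H_p$ to be cocommutative for infinitely many $p$, whence $H$ is cocommutative, i.e., a group algebra.

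The main obstacle I anticipate is the first step of the reduction approach: establishing $H$-stability of the ideal $(T)$, equivalently showing that the abstractly-defined central element $T$ transforms nicely under $H$ despite the Hopf action not preserving the grading of $S$. In the second, purely mod-$p$ approach the analogous difficulty is transferred into verifying that the PI degree of $S_p$ (and of $D_p$) is exactly the order of the reduced translation $\sigma_p$ rather than merely bounded by it, and that Corollary~\ref{ntcor} applies to the elliptic curve $E$ with its cyclic subgroup generated by $s=(a:b:c)$ to produce infinitely many primes with the reduced order coprime to $d!$. I expect this second approach to be the one the authors take, since it runs exactly parallel to Theorem~\ref{main5} with ``$B(X,\sigma,\mathcal L)$'' replaced by ``$S(a,b,c)$'' and the PI degree bookkeeping routed through $T$ and the quotient $B(E,\sigma,\mathcal L)$.
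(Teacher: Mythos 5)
Your second route is exactly the paper's proof: the authors quote \cite[part 5 of Theorem on page 7]{AST} for the fact that $S(a,b,c)$ is PI of PI degree $N$ when $\sigma$ has order $N$, and then run the mod-$p$ argument of Theorem~\ref{main3} essentially verbatim, applying Corollary~\ref{ntcor} to the elliptic curve $E$ (as in Theorem~\ref{main5}) to produce infinitely many primes for which the reduced translation has finite order coprime to $d!$, so that \cite[Proposition~3.3(ii)]{CEW1} forces $H_p$, and hence $H$, to be cocommutative. Your instinct about the first route is also right: the paper never attempts to prove $H$-stability of the ideal $(T)$ (which is genuinely problematic, since the Hopf action need not preserve the grading and the center of $S(a,b,c)$ is larger than $k[T]$ when $\sigma$ has finite order), so the descent-to-$B(E,\sigma,\mathcal{L})$ strategy is avoided entirely.
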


\begin{proof} It is known from the theory of Sklyanin algebras that 
if $\sigma$ has order $N$ then $S(a,b,c)$ is PI with PI degree $N$ (see \cite[part 5 of Theorem on page 7]{AST}). 
Therefore, Theorem \ref{mainSkly} is proved similarly to Theorem~\ref{main3}, 
using Corollary~\ref{ntcor} for elliptic curves, as in Theorem~\ref{main4}. 
\end{proof}

\begin{remark} 
The semisimplicity condition on $H$ in Theorem \ref{main5} 
and the infinite order condition in Theorem \ref{ellcurve} cannot be dropped, 
as there exists a Sweedler Hopf algebra action on $B:=B(X,\sigma,{\mathcal L})$ 
if $\sigma$ has odd order~$N$.  Namely, we take a sufficiently large odd number $m$ such that the line bundle ${\mathcal{L}}^{\otimes m}$ is very ample (it exists since $\mathcal{L}$ is ample). Now $B[mN]\neq 0$ and there exists an eigenvector $f$ of $\sigma$ in $B[mN]$. We then take $z=f^N$, a nonzero central element of odd degree $mN^2$, so that
a desired action is given by Example~\ref{Swee}. 

Also, the semisimplicity assumption in Theorem \ref{mainSkly} 
cannot be \linebreak dropped, as there exists a Sweedler Hopf algebra action on 
$S(a,b,c)$ for any $a,b,c$, given by Example \ref{Swee} 
where we use the central element $T$ in place of the element $z$. 
\end{remark}

\section{Appendix} 

The goal of this Appendix is to provide number-theoretic results needed in Section 4. 
We start with quoting a result from \cite{Per} (in which we take $F$ to be the number field $K$ itself).

\begin{theorem}\label{ntresult} \cite[Theorem~7]{Per} 
Let $G$ be the product of an abelian variety and a torus defined over a number field $K$. 
Let $g\in G(K)$ be a $K$-rational point on $G$ such that the Zariski closure $G_g$
of the subgroup $\langle g\rangle \subset G(K)$ generated by $g$ is connected. Fix a positive integer $r$.
Then there exists a positive Dirichlet density of primes
${\mathfrak{p}}$ of $K$ such that the order of $g$ modulo ${\mathfrak{p}}$
is coprime to $r$. \qed
\end{theorem}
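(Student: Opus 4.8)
The statement is a reduction-of-points result, and the plan is to convert the condition on the order of $g$ modulo $\mathfrak{p}$ into a Chebotarev condition on Frobenius acting on division fields, with the connectedness of $G_g$ used to force the relevant Kummer theory to be nondegenerate. First I would reduce to one prime at a time: the order of the reduction $\bar g \in G(\kappa_{\mathfrak{p}})$ (with $\kappa_{\mathfrak{p}}$ the residue field) is coprime to $r$ if and only if it is coprime to each prime $\ell \mid r$, and, since $G(\kappa_{\mathfrak{p}})$ is finite abelian, $\mathrm{ord}(\bar g)$ is prime to $\ell$ if and only if $\bar g$ lies in the prime-to-$\ell$ part, equivalently $\bar g \in \ell^{n} G(\kappa_{\mathfrak{p}})$ for all $n \ge 1$. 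Thus everything reduces to controlling, for each $\ell \mid r$ and all $n$, the $\ell^{n}$-divisibility of $\bar g$ in $G(\kappa_{\mathfrak{p}})$.

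Next I would express $\ell^{n}$-divisibility through Galois theory. Fixing a coherent system of $\ell^{n}$-division points of $g$ in $G(\bar K)$, form the tower $K_n := K(G[\ell^n], \tfrac{1}{\ell^n} g)$; the set of $\ell^n$-division points of $g$ is a torsor under $G[\ell^n]$, on which $\mathrm{Gal}(K_n/K)$ acts through the affine group $G[\ell^n] \rtimes \Aut(G[\ell^n])$. For $\mathfrak{p}$ of good reduction with $\mathfrak{p} \nmid \ell$ (all but finitely many), reduction is injective on $\ell$-power torsion, and $\bar g$ is $\ell^{n}$-divisible in $G(\kappa_{\mathfrak{p}})$ precisely when $\mathrm{Frob}_{\mathfrak{p}}$ has a fixed point on this torsor. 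Passing to the limit, the desired primes are those whose Frobenius, as an element of $\varprojlim_n \mathrm{Gal}(K_n/K) \subseteq T_\ell G \rtimes \Aut(T_\ell G)$, fixes a coherent division point, simultaneously for all $\ell \mid r$.

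I would then show these favourable Frobenius elements form a set of positive Haar measure in the Galois image. Writing $\mathrm{Frob}_{\mathfrak{p}} = (b, A)$ with linear part $A \in \Aut(T_\ell G)$ and Kummer translation $b \in T_\ell G$, a coherent fixed point exists exactly when $b \in (1 - A) T_\ell G$. Two facts make this positive-measure. First, every arithmetic Frobenius satisfies $\det(1 - A) \ne 0$: on the abelian-variety factor the eigenvalues of $A$ are Weil numbers of absolute value $(\#\kappa_{\mathfrak{p}})^{1/2} \ne 1$, and on the torus factor $A$ acts by the cyclotomic character $\#\kappa_{\mathfrak{p}} \ne 1$; hence $(1 - A) T_\ell G$ is always a finite-index submodule. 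Second---and this is exactly where the connectedness of $G_g$ enters---the set of translations $b$ occurring over a given $A$ is open, of finite index, in $T_\ell G$; this nondegeneracy of the Kummer theory for a point generating a connected $G_g$ rests on the open-image theorems of Bashmakov, Ribet, and Hindry, combined with Serre's open-image results for the torsion part and the independence of the towers for distinct $\ell \mid r$. Intersecting a finite-index Kummer image with the finite-index submodule $(1 - A) T_\ell G$ leaves a positive proportion of favourable $b$ for each $A$, so integrating over $A$ yields positive measure.

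Finally I would apply the Chebotarev density theorem to the compositum $\bigcup_{\ell \mid r} \bigcup_n K_n$ over $K$: the primes whose Frobenius lands in the positive-measure favourable set have positive Dirichlet density, and for each such $\mathfrak{p}$ the order of $\bar g$ is coprime to $r$. I expect the main obstacle to be the largeness step, namely establishing that the combined torsion-and-Kummer image is open; this is the technical heart, drawing on the deep open-image theory for abelian varieties and requiring a careful check that connectedness of $G_g$ is precisely the hypothesis under which that theory applies, together with the merging of the abelian-variety and torus factors. The torus factor in isolation is governed by classical Kummer theory and is comparatively routine.
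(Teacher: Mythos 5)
First, a framing remark: the paper does not prove this statement at all --- it is quoted as a black box from Perucca \cite[Theorem~7]{Per} (hence the \verb|\qed| attached to the statement), so your attempt is to be measured against Perucca's argument, not against anything in the text. Your general architecture (order coprime to $r$ iff $\overline{g}$ is $\ell^n$-divisible in $G(\kappa_{\mathfrak{p}})$ for all $n$ and all $\ell\mid r$; translation into Frobenius fixed points on the torsor of $\ell^n$-division points; openness of the combined torsion--Kummer image; Chebotarev) is indeed the standard strategy in this literature, and your observation that $\det(1-A)\neq 0$ for every arithmetic Frobenius (Weil numbers on the abelian factor, the cyclotomic character on the toric factor) is correct and is used in such proofs. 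However, two steps are genuinely wrong or missing as written.

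First, your claim that connectedness of $G_g$ forces the Kummer translations to form a finite-index subgroup of $T_\ell G$ is false whenever $G_g\subsetneq G$. Take $g$ to be the identity: then $G_g=\{1\}$ is connected, the division points of $g$ may be chosen to be torsion points, and the Kummer image is $\{0\}$. Or take $g=(P,1)\in E\times \mathbb{G}_m$ with $P$ generating a Zariski-dense subgroup of $E$: the Kummer image lies in $T_\ell E\oplus 0$. The theorems of Bashmakov, Ribet and Hindry give openness in $T_\ell(G_g)$, not in $T_\ell G$. The missing step is to replace $G$ by $G_g$ at the outset --- legitimate because the order of $\overline{g}$ is computed inside $G_g(\kappa_{\mathfrak{p}})$ for almost all $\mathfrak{p}$ --- and this in turn requires the true but non-obvious fact that a connected algebraic subgroup of a product of an abelian variety and a torus is again a product of an abelian subvariety and a subtorus (so that one stays within the class of groups for which the split Kummer theory applies, avoiding the deficient-point pathologies of non-split semiabelian varieties).

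Second, and more seriously, the positive-measure step fails as stated. For a fixed linear part $A$, the set of translations $b$ realized by Galois elements lying over $A$ is a \emph{coset} $c_A+N$ of the Kummer subgroup $N$, not the subgroup $N$ itself; a coset of a finite-index subgroup of $T_\ell G$ can be entirely disjoint from $(1-A)T_\ell G$ (e.g.\ $N=(1-A)T_\ell G=\ell\, T_\ell G$ and $c_A\notin \ell\, T_\ell G$). So ``intersecting a finite-index Kummer image with the finite-index submodule $(1-A)T_\ell G$ leaves a positive proportion of favourable $b$ for each $A$'' is unjustified and false in general, and one cannot simply ``integrate over $A$.'' This is precisely the technical heart of Perucca's theorem: one must exhibit specific favourable Galois elements, for instance by working at a finite level with the sufficient condition ``the $\ell$-primary part of $G(\kappa_{\mathfrak{p}})$ has exponent at most $\ell^n$ and $\overline{g}\in \ell^n G(\kappa_{\mathfrak{p}})$,'' and then producing elements satisfying it using scalar (homothety) elements in the image of the torsion representation --- Bogomolov's theorem for the abelian factor, the cyclotomic character for the toric factor --- combined with openness of the Kummer part and independence of the various $\ell\mid r$ and of the two factors. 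Without an argument producing at least one such element together with a conjugation-stable open neighborhood of it, the proof does not close.
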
 

\begin{corollary}\label{ntcor}  Let $K,G$ be as in Theorem \ref{ntresult}, let 
$g\in G(K)$, and let $\ell:=|G_g/G_g^0|$, where $G_g^0$ is the connected component of the identity in $G_g$
(i.e., $G_g/G_g^0=\Bbb Z/\ell\Bbb Z$). 
Fix a positive integer $r$ coprime to $\ell$. Then, there exists a positive Dirichlet density of primes
${\mathfrak{p}}$ of $K$ such that the order of $g$ modulo ${\mathfrak{p}}$
is coprime to $r$.
\end{corollary}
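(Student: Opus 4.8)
The plan is to reduce Corollary~\ref{ntcor} to Theorem~\ref{ntresult} by passing from the point $g$, whose Zariski closure $G_g$ may be disconnected, to a suitable power $g^\ell$ whose closure is connected, so that the cited theorem applies directly. First I would observe that since $G_g/G_g^0 \cong \mathbb{Z}/\ell\mathbb{Z}$, the element $g$ maps to a generator of this cyclic quotient, and hence $g^\ell \in G_g^0$. I would then argue that the Zariski closure of $\langle g^\ell\rangle$ is exactly $G_g^0$: indeed $\langle g^\ell\rangle \subseteq G_g^0$ gives one inclusion, and since $\langle g^\ell\rangle$ has finite index in $\langle g\rangle$ (index dividing $\ell$), its closure has finite index in $G_g$, forcing it to contain the connected component $G_g^0$. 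Thus $G_{g^\ell}$ is connected, and Theorem~\ref{ntresult} applies to the point $g^\ell$.

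Next I would apply Theorem~\ref{ntresult} to $g^\ell$ with the same integer $r$: this yields a positive Dirichlet density of primes $\mathfrak{p}$ of $K$ such that the order of $g^\ell$ modulo $\mathfrak{p}$ is coprime to $r$. The remaining step is to deduce that the order of $g$ itself modulo $\mathfrak{p}$ is coprime to $r$. Let $M$ denote the order of $g$ modulo $\mathfrak{p}$ and $M'$ the order of $g^\ell$ modulo $\mathfrak{p}$; then $M' = M/\gcd(M,\ell)$, so $M \mid M'\ell$. Since $M'$ is coprime to $r$ by construction, and $\ell$ is coprime to $r$ by hypothesis, the product $M'\ell$ is coprime to $r$, and therefore any divisor $M$ of $M'\ell$ is also coprime to $r$. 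This gives the desired conclusion.

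The main obstacle I anticipate is the identification $G_{g^\ell} = G_g^0$, specifically the verification that taking the $\ell$-th power does not shrink the Zariski closure below the full identity component. The finite-index argument handles this cleanly: a closed subgroup of finite index in $G_g$ must be open, hence contains $G_g^0$. One should also be slightly careful that ``order modulo $\mathfrak{p}$'' is well-defined, which requires restricting to primes of good reduction where $g$ (and $g^\ell$) reduce to points of finite order on the reduction of $G$; this is implicit in the formulation of Theorem~\ref{ntresult} and the set of excluded primes has density zero, so it does not affect the positive-density conclusion. Everything else is elementary number theory on divisibility of orders, so I expect no serious difficulty beyond bookkeeping.
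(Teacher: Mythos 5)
Your proposal is correct and follows essentially the same route as the paper: pass to $g^{\ell}$, identify $G_{g^{\ell}}=G_g^0$ so that Theorem~\ref{ntresult} applies, and then use that the order of $g$ modulo $\mathfrak{p}$ divides $\ell$ times the order of $g^{\ell}$ modulo $\mathfrak{p}$, both factors being coprime to $r$. The paper's proof is just a terser version of this; your finite-index argument for $G_{g^{\ell}}=G_g^0$ and the explicit divisibility bookkeeping are exactly the details it leaves implicit.
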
 

The corollary above is used in the proof of Theorem~\ref{main3} where $d!$ is~$r$ and $N_{\mathfrak p}$ is the order of $g$ modulo $\mathfrak{p}$.

\begin{proof} The order of $g$ in $G_g/G_g^0$ is $\ell$,  
hence $G_{g^{\ell}}=G_g^0$. Now the statement follows by applying Theorem 
\ref{ntresult} to $g^{\ell}$.  
\end{proof} 

\begin{example}\label{exaaa} Let $G$ be a split $m$-dimensional torus, and take an element $g:=(q_1,\dots,q_m)$ $\in G$. We have the following statements.

\begin{enumerate}
\item The group $G_g$ is connected if and only if the group $\Gamma$ generated 
by $q_1,\dots,q_m$ in $K^\times$ is free, i.e., does not contain non-trivial roots of unity. 
Indeed both conditions are equivalent to the condition that 
any character $\chi$ of $G$ which maps $g$ to an $\ell$-th root of unity
satisfies $\chi(g)=1$. 
\medskip

\item More generally, $|G_g/G_g^0|=\ell$ 
if and only if the group of roots of unity generated by $\chi(g)$, where 
$\chi$ runs through characters of $G$ such that $\chi(g)$ is a root of unity, is 
$\mu_{\ell}$. In other words, $\ell$ is the order of the torsion subgroup in 
$\Bbb Z^m/g^\perp$, where $g^\perp$ is the subgroup of characters  
$\chi$ such that $\chi(g)=1$. In particular, $\ell$ depends only on the multiplicative relations satisfied by $q_{ij}$. 
\medskip

\item If $\dim G=1$ (i.e., $G= \Bbb G_m$ or an elliptic curve), 
then $G_g$ is connected if and only if $g$ has infinite order or $g=1$. 
More generally, $|G_g/G_g^0|=\ell>1$ if and only if $g$ has order  $\ell$.  
\end{enumerate}
\end{example}

\section*{Acknowledgments}
We thank Bjorn Poonen for many useful discussions and for the number-theoretic reference \cite{Per}, which is crucial for our arguments.
We are also grateful to R. Bezrukavnikov, I. Losev, and H. Nakajima 
 for useful discussions and explanations. We thank the referee for many useful comments  that improved greatly the quality of this manuscript. 
The authors were supported by the National Science Foundation: NSF-grants DMS-1502244 and DMS-1550306.

\end{document}